\newcommand{\keep}[1]{{\binoppenalty=10000\relpenalty=10000 #1}}
\renewcommand{\epsilon}{\varepsilon}
\def\Xint#1{\mathchoice
   {\XXint\displaystyle\textstyle{#1}}%
   {\XXint\textstyle\scriptstyle{#1}}%
   {\XXint\scriptstyle\scriptscriptstyle{#1}}%
   {\XXint\scriptscriptstyle\scriptscriptstyle{#1}}%
   \!\int}
\def\XXint#1#2#3{{\setbox0=\hbox{$#1{#2#3}{\int}$}
     \vcenter{\hbox{$#2#3$}}\kern-.5\wd0}}
\def\dint{\Xint-}
\renewcommand{\Re}{\operatorname{Re}}
\renewcommand{\Im}{\operatorname{Im}}
\let\originalleft\left
\let\originalright\right
\renewcommand{\left}{\mathopen{}\mathclose\bgroup\originalleft}
\renewcommand{\right}{\aftergroup\egroup\originalright}
\newcommand{\pfrac}[2]{\left(\frac{#1}{#2}\right)}
\newcommand{\pfr}[2]{\left(\frac{#1}{#2}\right)}
\newcommand{\afrac}[2]{\left|\frac{#1}{#2}\right|}
\newcommand{\fr}[2]{\frac{#1}{#2}}
\newcommand{\nfr}[2]{#1/#2}
\newcommand{\tf}[2]{\tfrac{#1}{#2}}
\newcommand{\tfr}[2]{\tfrac{#1}{#2}}
\newcommand{\rfr}[2]{\tfrac{1}{#2}#1}
\newcommand{\tops}[1]{\texorpdfstring{#1}{}}
\newcommand{\qqand}{\qquad\text{and}\qquad}
\newtheorem{theorem}{Theorem}
\newtheorem{proposition}[theorem]{Proposition}
\newtheorem{lemma}[theorem]{Lemma}
\newtheorem{corollary}[theorem]{Corollary}
\newtheorem*{theorem*}{Theorem}
\theoremstyle{definition}
\newtheorem{remark}[theorem]{Remark}
\newtheorem{definition}[theorem]{Definition}
\numberwithin{theorem}{section}
\numberwithin{equation}{section}
\numberwithin{figure}{section}
\newcommand{\<}{\begin{equation}}
\renewcommand{\>}{\end{equation}}
\newcommand{\tssum}{{\textstyle\sum}}
\newcommand{\expp}[1]{\exp\left( #1 \right)}
\newcommand{\Dir}[2]{\mathfrak{D}[#1;#2]}
\newcommand{\Mel}[2]{\mathcal{M}\left[#1;#2\right]}
\newcommand{\MelZero}[2]{\mathcal{M}^0\left[#1;#2\right]}
\newcommand{\MelInv}[2]{\mathcal{M}^{-1}[#1;#2]}
\newcommand{\pth}[1]{\left(#1\right)}
\newcommand{\ABS}[1]{\left|#1\right|}
\renewcommand{\O}[1]{O\left(#1\right)}
\newcommand{\ldint}[1]{\dint_{#1-i\infty}^{#1+i\infty}}
\newcommand{\vdint}[1]{\dint_{\left<#1\right>}}
\DeclareMathOperator{\Res}{Res}
\DeclareMathOperator{\logsc}{logsc}
\DeclareMathOperator{\sgn}{sgn}
\DeclareMathOperator{\expsc}{expsc}
\newcommand{\rt}[1]{\sqrt{#1}}
\let\mod\undefined
\newcommand{\mod}[1]{\,(#1)}
\newcommand{\simstar}{\raisebox{-0.4ex}{ $\overset{\raisebox{-0.2ex}{\scalebox{0.7}{$*$}}}{\sim}$ }}
\renewcommand{\;}{\hspace{0.04em}} 
\renewcommand{\:}{\hspace{0.08em}} 
\renewcommand{\.}{\hspace{-0.08em}}
\newcommand{\edot}{\hspace{-0.10em}\cdot\hspace{-0.16em}} 
\newcommand{\Qdel}{\vartheta} 
\newcommand{\ellexp}{\fr{1-\xs}{1-\xQ}} 
\newcommand{\PsiD}[1]{\Psi_{(#1)}}
\newcommand{\Psid}[1]{\Psi^{(#1)}}
\newcommand{\logX}{\log X}
\newcommand{\logx}{\log x}
\newcommand{\less}{\ll}
\newcommand{\great}{\gg}
\newcommand{\lf}{\left}
\newcommand{\lh}{\left}
\newcommand{\rh}{\right}
\newcommand{\signs}{\{0,\pm1\}}
\newcommand{\spnf}{(p(n,f))_\nn}
\newcommand{\spnmu}{(p(n,\xm))_\nn}
\newcommand{\spnxl}{(p(n,\xl))_\nn}
\newcommand{\sdpeqs}{\eqref{eq:sdp} and \eqref{eq:sdp*}}
\newcommand{\sapprox}{{\tiny\hspace{0.06em}\raisebox{0.18em}{$\approx$}}}
\newcommand{\cc}{\mathbb{C}}
\newcommand{\nn}{\mathbb{N}}
\newcommand{\pp}{\mathbb{P}}
\newcommand{\fa}{\mathfrak{a}}
\newcommand{\fb}{\mathfrak{b}}
\newcommand{\fc}{\mathfrak{c}}
\newcommand{\fz}{\mathfrak{z}}
\newcommand{\fS}{\mathfrak{S}}
\newcommand{\xa}{\alpha}
\newcommand{\xd}{\delta}
\newcommand{\xe}{\epsilon}
\newcommand{\xg}{\gamma}
\newcommand{\xh}{\eta}
\newcommand{\xk}{\kappa}
\newcommand{\xl}{\lambda}
\newcommand{\xm}{\mu}
\newcommand{\xr}{\rho}
\newcommand{\xs}{\sigma}
\newcommand{\xvq}{\vartheta}
\newcommand{\vphi}{\varphi}
\newcommand{\xz}{\zeta}
\newcommand{\xG}{\Gamma}
\newcommand{\xQ}{\Theta}
\newcommand{\xO}{\Omega}
\newcommand{\cE}{\mathcal{E}}
\newcommand{\cM}{\mathcal{M}}
\newcommand{\cR}{\mathcal{R}}
\newcommand{\sW}{\mathscr{W}}
\def\centerarc[#1](#2)(#3:#4:#5)
\newcommand{\addappendix}{%
  \section*{\appendixname}
  \counterwithin*{figure}{section}
  \stepcounter{section}
  \renewcommand{\thesection}{A}
  \renewcommand{\thefigure}{\thesection.\arabic{figure}}
}
\renewcommand{\email}[2][]{%
  \ifx\emails\@empty\relax\else{\g@addto@macro\emails{,\space}}\fi%
  \@ifnotempty{#1}{\g@addto@macro\emails{\textrm{(#1)}\space}}%
  \g@addto@macro\emails{#2}%
}
   \def\MR#1{}
\def\section{%
    \@startsection{section}{1}%
    \z@{.7\linespacing\@plus\linespacing}{.5\linespacing}%
    {\normalfont\large\bfseries}%
}
\def\@seccntformat#1{%
  \protect\textup{\protect\@secnumfont
    \csname the#1\endcsname
\space\space
  }%
}
\begin{document}

\title{Biasymptotics of the M\"obius- and Liouville-signed partition numbers} 
\author[T.\ Daniels]{Taylor Daniels}
\address{Dept.\ of Mathematics, Purdue Univ., 150 N Univserity St, W Lafayette, IN 47907}
\email{daniel84@purdue.edu}
\subjclass[2020]{Primary: 11P55, 11P82, 11M26. \\ \indent \textit{Keywords and phrases}: partitions, M\"obius function, Liouville function.}
\begin{abstract}
    For $n \in \mathbb{N}$ let $\Pi[n]$ denote the set of partitions of $n$, i.e., the set of positive integer tuples $(x_1,x_2,\ldots,x_k)$ such that \keep{$x_1 \geq x_2 \geq \cdots \geq x_k$} and \keep{$x_1 + x_2 + \cdots + x_k = n$}. 
    Fixing $f:\mathbb{N}\to\{0,\pm 1\}$, for $\pi = (x_1,x_2,\ldots,x_k) \in \Pi[n]$ let $f(\pi) := f(x_1)f(x_2)\cdots f(x_k)$. In this way we define the {signed partition numbers} \[ p(n,f) = \sum_{\pi\in\Pi[n]} f(\pi). \]
    
    Building on the author's previous work on the quantities $p(n,\mu)$ and $p(n,\lambda)$, where $\mu$ and $\lambda$ are the M\"obius and Liouville functions of prime number theory, respectively, on assumptions about the zeros of the Riemann zeta function we establish an alternation of the terms $p(n,\mu)$ between two asymptotic behaviors as $n\to\infty$. Similar results for the quantities $p(n,\lambda)$ are established. However, it is also demonstrated that if the Riemann Hypothesis (RH) holds, then it is possible that the quantities $p(n,\lambda)$ maintain a single asymptotic behavior as $n\to\infty$. In particular, this stable asymptotic behavior occurs if, in addition to RH, it holds that all zeros of $\zeta(s)$ in the critical strip $\{0 < \Re(s) < 1\}$ are simple and the residues of $1/\zeta(s)$ at these zeros are not too large. 
    
    To formally describe these stable and alternating behaviors, the notions of asymptotic and biasymptotic sequences are introduced using a modification of the real logarithm.
\end{abstract}
\maketitle

\section{Introduction}
\label{sec:intro}

For $n \in \nn$ let $\Pi[n]$ denote the set of \emph{partitions} of $n$, i.e., the set of positive integer tuples $(x_1,x_2,\ldots,x_k)$ such that \keep{$x_1 \geq x_2 \geq \ldots \geq x_k$} and \keep{$x_1 + x_2 + \cdots + x_k = n$}. 
For fixed $f:\nn\to\signs$, for $n \in \nn$ and $\pi=(x_1,x_2,\ldots,x_k) \in \Pi[n]$ let
    \<
        \label{eq:fpi}
        f(\pi) := f(x_1)f(x_2)\cdots f(x_k).
    \>
With this we define for $n\in\nn$ the \emph{signed partition numbers} $p(n,f)$ via
    \<
        \label{eq:pnfDefin}
        p(n,f) = \sum_{\pi\in\Pi[n]} f(\pi).  
    \>

If $n \in \nn$ has prime factorization $p_1^{a_1}p_2^{a_2} \cdots p_r^{a_r}$ with distinct primes $p_i$ and all $a_i \geq 1$, then the M\"obius $\xm$ and Liouville $\xl$ functions are defined via
    \[
        \xl(n) = (-1)^{a_1 + \cdots + a_r} \qquad \text{and} \qquad \xm(n) = 
            \begin{cases}
                (-1)^r & \text{if all $a_i = 1$,} \\
                0 & \text{otherwise.}
            \end{cases} 
    \]
In \cite{daniels2023mobius}, the following asymptotic results on the sequences $\spnmu$ and $\spnxl$ are established: For all $\xe > 0$, as $n \to \infty$ one has
    \begin{align}
        \label{eq:bigOh}
        p(n,\mu) = O\big(e^{(1+\xe)\sqrt{n}}\big) \qqand p(n,\xl) = O\big(e^{(1+\xe)\sqrt{\xz(2)n}}\big),
    \end{align}
where $\xz(2) = \pi^2/6$. In addition, for $n=2k$ with $k\in\nn$, as $k\to\infty$ one has
    \<
        \label{eq:logpn}
        \log p(2k,\mu) \sim \sqrt{2k} \qqand \log p(2k,\xl) \sim \sqrt{\zeta(2)2k},
    \>
where the relation $a_m \sim b_m$ indicates that $\lim_{m\to\infty} a_m/b_m = 1$.

This paper builds on the results of \cite{daniels2023mobius} to consider, in essence, the extent to which the relations \eqref{eq:logpn} hold when the input $n$ is odd. The statements of our main results require additional vocabulary, so for the sake of exposition we first state approximate versions of these results and then formalize them in section \ref{sec:Biasymp}. Here we use $\approx$ to mean ``behaves like, within reasonable error.''

Let $\xz(s)$ denote the Riemann zeta function and let
    \<
        \xQ := \sup\{\Re(\rho) : \xz(\rho) = 0\}.
    \>
It is well known that $1/2 \leq \xQ \leq 1$; the assertion that $\xQ=1/2$ is the Riemann Hypothesis. Our three primary results extending the relations \eqref{eq:logpn} are then roughly described in the following approximate statements:
\emph{
\begin{enumerate}[label={\normalfont(\arabic*\sapprox)}]
    \item \label{it:mu} If $\xQ<1$, then for all $L \in \nn$ there exist arbitrarily large $N,N_* \in \nn$ such that
        \begin{align*}
            \log p(n,\xm) &\approx \rt{n} \qquad \text{for $N \leq n \leq N+L-1$}, \\
            \log\!\big((-1)^np(n,\mu)\big) &\approx \rt{n} \qquad \text{for $N_* \leq n \leq N_*+L-1$.}
        \end{align*}
    \item \label{it:xl} If $\xQ<1$ but $\xQ>1/2$ as well, then for all $L \in \nn$ there exist arbitrarily large $N,N_* \in \nn$ such that
        \begin{align*}
            \log p(n,\xl) &\approx \rt{\xz(2)n} \qquad \text{for $N \leq n \leq N+L-1$}, \\
            \log\!\big((-1)^np(n,\xl)\big) &\approx \rt{\zeta(2)n} \qquad \text{for $N_* \leq n \leq N_*+L-1$.}
        \end{align*}
    \item \label{it:asm} If $\xQ=1/2$, then the long-term behavior of $\spnxl$ is uncertain. In fact, it is possible that for some $N_0 \in \nn$ one has
        \<
            \label{eq:3axp}
            \log\!\big((-1)^n p(n,\xl)\big) \approx \rt{\xz(2)n} \qquad \text{for \emph{all} $n>N_0$.}    
        \>
\end{enumerate}}

Considering items \ref{it:mu}, \ref{it:xl}, and \ref{it:asm} above we see, informally speaking, that on some strings $N,N+1,\ldots,N+L-1$ the relations \eqref{eq:logpn} can be ``directly'' extended to include odd $n$, but that on different strings $N_*, N_*+1,\ldots, N_*+L_*-1$ the extension requires the inclusion of a factor $(-1)^{n}$.

\subsection*{Acknowledgements.}
The author would like to thank Trevor Wooley for financial support and additionally specially thank Trevor Wooley and Ben McReynolds for their numerous, invaluable suggestions and feedback in preparing this work. Thanks are also extended to Nicolas Robles and Alexandru Zaharescu for helpful conversations in formulating this paper.

\section{Asymptotic and biasymptotic sequences}
\label{sec:Biasymp}

To motivate precise versions of statements \ref{it:mu}, \ref{it:xl}, and \ref{it:asm} we recall three classical results on certain sequences $(p(n,1_A))_\nn$, where $1_A$ is the indicator function for $A \subset \nn$. For brevity let $p(n,A):=p(n,1_A)$. The most well known of these three is the relation 
    \<
        \label{eq:LogPn1sim}
        \log p(n,\nn) \sim \xk \rt{n}, 
    \>
where
    \[
        \xk := \pi \sqrt{2/3},    
    \]
which is an immediate corollary of Hardy and Ramanujan's main results in \cite{hardy1918asymptotic}.

To state the second result we recall that $A \subset \nn$ is said to have \emph{density} $\xd_A$ if the ratio $|A \cap \{1,2,\ldots,N\}|N^{-1}$ tends to $\xd_A$ as $N\to\infty$. It is evident that $\xd_\nn = 1$, and it is well known that the set of squarefree numbers has density $6/\pi^2 \approx 0.601$. The prime number theorem implies that $\xd_\pp = 0$. A remarkable theorem of Erd\H{o}s \cite{erdos1942elementary} states that: if $A \subset \nn$ and $\mathrm{gcd}(A)=1$, then $A$ has density $\xd_A > 0$ if and only if
    \<
        \label{eq:Erdossim}
        \log p(n,A) \sim \xk\rt{\xd_A n}.
    \>
We remark that the proof of the ``only if'' statement in Erd\H{o}s' theorem is omitted in \cite{erdos1942elementary} since the proof of a similar result is given there. Complete proofs of both statements are given in \cite{nathanson2008elementary}*{Ch.\ 16}. 

The third result, due to Roth and Szekeres \cite{roth1954some}, states that as $n\to\infty$ one has
	\<
		\label{eq:LogPrimessim}
		\log p(n,\pp) \sim \xk \rt{\fr{2n}{\log n}},
	\>
where $\pp$ is the set of prime numbers. Finally we note that we may rewrite relations \eqref{eq:bigOh} and \eqref{eq:logpn} and statements \ref{it:xl} and \ref{it:asm} with $\rt{\xz(2)} = \tf12\xk = \tf12\pi\rt{2/3}$.

Computing $p(n,\xm)$ and $p(n,\xl)$ for $n \leq 10^5$ we notice the following: 
\begin{enumerate}[label=(\roman*)]
    \item Except for very small $n$, say $n \leq 50$, we have
        \<
            \label{eq:empirical}
            p(n,\xm) \approx (-1)^n e^{\rt{n}} \qqand p(n,\xl) \approx (-1)^n e^{\fr12\xk\rt{n}}.
        \>
    \item \label{it:zero} here are some $n$ for which $p(n,\xm)$ or $p(n,\xl)$ are 0, but all such $n$ are $\leq 25$.
\end{enumerate}
Thus, because these $p(n,f)$ may be positive, negative, or zero we cannot simply speak of $\log p(n,f)$, and instead employ the following modification of the logarithm. 

For real $x$ we define the \emph{logarithmic scale} function $\logsc(x)$ via
	\<
		\logsc(x) = \sgn(x)\log(|x|+1),    
	\>
where $\sgn(0)=0$ and $\sgn(x)=x/|x|$ otherwise.
One may easily check that $\logsc x$ is continuously differentiable and invertible with inverse 
    \[
        \expsc(x) := \sgn(x)(\exp|x|-1),
    \] 
and additionally that $\logsc{x}\sim\logx$ as $x\to\infty$. Using $\logsc(x)$ we may characterize sequences that behave like $(-1)^n \expp{x_n+o(x_n)}$ for some sequence $(x_n)$ that grows to $+\infty$, as seen in the following lemma. 

\begin{lemma}
    \label{lem:logsc}
    Let $(a_n)$ and $(x_n)$ be real sequences with $x_n > 0$ for all $n$, and suppose that $x_n \to \infty$ as $n\to\infty$. The relation $(-1)^n \logsc a_n \sim x_n$ holds if and only if
        \<
            \label{eq:LogscOnlyIf}
            a_n = (-1)^n \lf( e^{x_n+y_n}-1 \rh)
        \>
    for some sequence $(y_n)$ satisfying the relation $y_n = o(x_n)$. 
\end{lemma}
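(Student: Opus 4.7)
The plan is to prove both directions by a direct application of the definitions of $\logsc$ and its inverse $\expsc$, together with the observation that if $x_n\to\infty$ and $y_n = o(x_n)$, then $x_n+y_n > 0$ for all sufficiently large $n$. Since $\logsc\colon\rr\to\rr$ is a continuous bijection, the equivalence amounts essentially to a dictionary translation between the asymptotic relation for $\logsc a_n$ and an exponential representation of $a_n$ itself.

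For the ``if'' direction I would suppose $a_n = (-1)^n(e^{x_n+y_n}-1)$ with $y_n = o(x_n)$. Because $x_n\to\infty$ and $y_n/x_n\to 0$, eventually $x_n+y_n>0$, hence $e^{x_n+y_n}-1>0$, which forces $\sgn a_n = (-1)^n$ and $|a_n|+1 = e^{x_n+y_n}$. Substituting into $\logsc a = \sgn(a)\log(|a|+1)$ yields $\logsc a_n = (-1)^n(x_n+y_n)$, and multiplying through by $(-1)^n$ gives $(-1)^n\logsc a_n = x_n+y_n$, which is $\sim x_n$ by the hypothesis on $y_n$.

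For the converse direction, I would simply \emph{define} $y_n := (-1)^n\logsc a_n - x_n$; the hypothesis $(-1)^n\logsc a_n \sim x_n$ together with $x_n > 0$ immediately gives $y_n = o(x_n)$. Rearranging produces $\logsc a_n = (-1)^n(x_n+y_n)$, and applying $\expsc = \logsc^{-1}$ yields $a_n = \sgn\bigl((-1)^n(x_n+y_n)\bigr)\bigl(e^{|x_n+y_n|}-1\bigr)$. Invoking the positivity of $x_n+y_n$ for large $n$ once more collapses this to $a_n = (-1)^n(e^{x_n+y_n}-1)$, as desired. For the finitely many small $n$ where $a_n$ might vanish or take values incompatible with the identity (e.g.\ the vanishings noted in item \ref{it:zero} above), one is free to redefine $y_n$ arbitrarily on that initial segment; since $y_n = o(x_n)$ is a purely tail condition, this costs nothing, and the stated representation is understood to hold for all sufficiently large $n$.

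I do not anticipate any substantive obstacle. Once the definitions of $\logsc$ and $\expsc$ are unpacked, the only point requiring verification is that $y_n = o(x_n)$ forces $x_n+y_n>0$ eventually, which is immediate from $x_n\to\infty$; everything else is algebraic manipulation of the two-piece formulas defining $\logsc$ and $\expsc$.
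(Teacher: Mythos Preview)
Your proof is correct and follows essentially the same approach as the paper: unpack the definitions of $\logsc$ and $\expsc$, use that $y_n=o(x_n)$ with $x_n\to\infty$ forces eventual positivity of $x_n+y_n$, and translate between the asymptotic statement and the exponential representation. The only cosmetic difference is that the paper writes $\logsc a_n = (-1)^n x_n + y_n$ and relabels $(-1)^n y_n$ as $y_n$ at the end, whereas you build the final $y_n$ directly; the paper also dismisses the ``if'' direction as immediate while you write it out.
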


\begin{proof}
    Let $b_n := \logsc{a_n}$. If $(-1)^n b_n \sim x_n$, then there exists a sequence $(y_n)$ with $y_n = o(x_n)$ for which $b_n = (-1)^n x_n + y_n$ for all $n$. By restricting to sufficiently large values of $n$ if necessary, there is no loss of generality in assuming that $y_n < x_n$ for all $n$. It follows that $\sgn(b_n) = (-1)^n$, that $|b_n| = (-1)^n b_n = x_n + (-1)^n y_n$, and that
        \[
            a_n = \expsc{b_n} = (-1)^n \pth{ e^{x_n+(-1)^ny_n}-1 }.    
        \]
    Formula \eqref{eq:LogscOnlyIf} then follows by simply relabeling $(-1)^n y_n$ as $y_n$. The reverse implication in the lemma is immediate.
\end{proof}

\emph{Strings} are tuples $(N,N+1,\ldots,N+L-1)$ where $N,L \in \nn$; the string's \emph{length} is $L$. Length $L=1$ and length $L>1$ strings are trivial and nontrivial, respectively.

Although computations for small $n$ suggest that $p(n,\xm) \approx (-1)^n \exp(\rt{n})$ as $n\to\infty$, our approximate result \ref{it:mu} above states that if all zeros of $\zeta(s)$ have real part $\leq 1-\xe$ for some $\xe>0$, then there exist arbitrarily long strings on which $p(n,\xm) \approx \exp(\rt{n})$ and arbitrarily long strings on which $p(n,\xm) \approx (-1)^n \exp(\rt{n})$, and that $p(n,\xm)$ alternates between these two approximate behaviors indefinitely. To formally describe this alternation, for sequences $(x_n)$ and $(y_n)$ we say that
    \<
        x_n \simstar y_n
    \>
if: for all $\xe > 0$ there exist arbitrarily long strings on which $|x_n/y_n-1|<\xe$. It is equivalent to say that $x_n\simstar y_n$ if, for all $\xe>0$ and $L\in\nn$, there exist arbitrarily large $N$ such that
    \<
        \lh|\nfr{x_n}{y_n} - 1\rh|<\xe \qquad \text{for $N \leq n \leq N+L-1$.}
    \>

\begin{definition}
    \label{def:Biasymp}
    For fixed $\xa\in(0,1]$, we say a sequence $(p(n,f))_\nn$ is \emph{biasymptotic with factor $\xa$} if both
        \begin{align}
            \logsc p(n,f) \simstar \xa\xk\rt{n} \qqand 
            (-1)^n \logsc p(n,f) \simstar \xa\xk\rt{n},
        \end{align}
    where $\xk=\pi\rt{2/3}$ is the ``asymptotic factor'' for $\log p(n,\nn)$ in \eqref{eq:LogPn1sim}.
\end{definition}

We note that, by Lemma \ref{lem:logsc}, the latter relation here is only possible if on arbitrarily long strings one has $p(n,f) = (-1)^n e^{\xa\xk\rt{n} + y_n}$ for some terms $y_n$ dominated by $\rt{n}$. 
Recalling that $\xQ := \sup\{\Re(\rho) : \xz(\rho) = 0\}$, we now formalize statements \ref{it:mu} and \ref{it:xl}.

\begin{theorem}[cf.\ \ref{it:mu}]
    \label{thm:muBiasymp}
    If $\xQ < 1$, then $\spnmu$ is biasymptotic with factor $\xk^{-1}$.
\end{theorem}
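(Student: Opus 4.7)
The plan is to attack $p(n,\xm)$ via its generating function $G_\xm(x) := \sum_n p(n,\xm)\,x^n = \prod_a (1-\xm(a)x^a)^{-1}$ combined with Cauchy's formula $p(n,\xm) = \frac{1}{2\pi i}\oint_{|x|=r} G_\xm(x)\,x^{-n-1}\,dx$. Because $G_\xm$ has equally strong singularities at $x=1$ and $x=-1$, a two-saddle analysis around $x = \pm r_n$ with $r_n = e^{-s_n}$ should produce an expression of the form
\[
    p(n,\xm) \approx A(s_n)\exp\!\big(\Psi(s_n)+ns_n\big) + (-1)^n A_*(s_n)\exp\!\big(\Psi_*(s_n)+ns_n\big),
\]
where $\Psi(s) := \log G_\xm(e^{-s})$ and $\Psi_*(s) := \log G_\xm(-e^{-s})$, with slowly-varying prefactors $A, A_*$ coming from the Gaussian saddle-point correction. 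Establishing this two-saddle formula with an effective error, extending the single-saddle analysis of \cite{daniels2023mobius} to include the contribution from $x = -e^{-s_n}$, is the first step.

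Next I would carry out a Mellin analysis of $\Psi$ and $\Psi_*$ by expanding $-\log(1-y) = \sum_m y^m/m$ and applying Mellin inversion to $e^{-as}$. Both integrands share a simple pole at $z=1$---coming via the factor $\xz(z)/\xz(2z)$---which produces the common leading term $1/(4s)$ and yields, at the common saddle $s_n \asymp 1/(2\rt n)$, the shared exponential size $\exp(\rt n)$. The \emph{difference} $\Psi_*(s)-\Psi(s)$ vanishes at this pole, and is instead controlled by residues at $z=0$ (of size $O(\log^2(1/s))$) and at the nontrivial zeros $\xr = \xs + i\xg$ of $\xz(z)$:
\[
    \Psi_*(s) - \Psi(s) \;=\; \sum_{\xr:\,\xz(\xr)=0} c_\xr\, s^{-\xr} \;+\; O(\log^2(1/s)),
\]
each summand having modulus $|c_\xr|\,s^{-\xs}$ and oscillating with phase $\xg\log s$. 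Under the hypothesis $\xQ < 1$, every amplitude $s^{-\xs}$ stays strictly below $s^{-1}$, so the main term $1/(4s)$ in $\Psi, \Psi_*$ is not disrupted; yet the fluctuations still have amplitude growing as $s^{-\xQ} \to \infty$.

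The biasymptotic conclusion then follows by showing that $\Psi - \Psi_*$ attains arbitrarily large values of \emph{both} signs as $s \to 0^+$. Whenever $\Psi(s_n) \gg \Psi_*(s_n)$ the positive saddle at $x = e^{-s_n}$ dominates and $p(n,\xm) \approx e^{\rt n}$, giving $\logsc p(n,\xm) \approx \rt n$; whenever $\Psi_*(s_n) \gg \Psi(s_n)$ the saddle at $x = -e^{-s_n}$ dominates and $p(n,\xm) \approx (-1)^n e^{\rt n}$, giving $(-1)^n \logsc p(n,\xm) \approx \rt n$. For a string of length $L$, the saddle $s_n$ shifts by only $O(L/n^{3/2})$ across the string, so the phase $s^{-i\xg}$ of any fixed zero-contribution shifts by only $O(L\xg/n) = o(1)$ as $n \to \infty$ with $L,\xg$ fixed; hence any sign-pattern of $\Psi-\Psi_*$ achieved at $s_N$ persists over $N \leq n \leq N+L-1$, provided $N$ is sufficiently large.

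The main obstacle is proving that $\Psi(s) - \Psi_*(s)$ genuinely attains arbitrarily large values of both signs as $s\to 0^+$. This is an oscillation theorem for a sum over $\xz$-zeros---in the spirit of Landau's and Littlewood's work on $\psi(x)-x$---and I anticipate a proof resting on (i) an equidistribution/Diophantine argument on the arguments $\xg\log s \pmod{2\pi}$ that aligns the phase of a dominant zero-contribution with $\xs$ arbitrarily close to $\xQ$, and (ii) a tail bound on $\sum_{|\xg|>T} c_\xr\, s^{-\xr}$ exploiting the decay $\Gamma(\xr) \asymp e^{-\pi|\xg|/2}$ of the coefficients to render very high zero-contributions negligible.
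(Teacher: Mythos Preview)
Your overall strategy matches the paper's closely: the two-saddle decomposition around $x=\pm e^{-s}$, the Mellin reduction of $\Psi$ and $\Psi_*$, and the recognition that biasymptoticity reduces to an oscillation statement for $\Psi-\Psi_*$ (essentially the paper's $2\phi_*(x,\mu)$) are all correct. The paper packages the two-saddle formula as Lemma~\ref{M:lem:pp*} and the Mellin reduction as Proposition~\ref{M:prop:pp*Asymp}, arriving at $\phi_*(x,\mu)=\phi_0(x,\mu)+O(x^{\xQ/2-\Qdel+\xe})$ with $\phi_0$ an explicit inverse Mellin integral.

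The substantive divergence is in the oscillation step, and here your proposal has a gap. You suggest an equidistribution/Diophantine argument on the phases $\xg\log s\pmod{2\pi}$, picking a zero with $\xs$ close to $\xQ$ and aligning its phase. Without a linear-independence hypothesis on the ordinates, this does not go through: there may be infinitely many zeros with real part arbitrarily close to $\xQ$, and nothing prevents their contributions from interfering destructively with your chosen term. Your tail bound via $\xG(\xr)$-decay controls zeros with large $|\xg|$, but not competing low-height zeros near $\Re(s)=\xQ$. The paper sidesteps this entirely via Landau's theorem (Lemma~\ref{lem:Landau}): if $\phi_0(x)<x^{\xQ/2-\xe}$ for all large $x$, then $\int_1^\infty(x^{\xQ/2-\xe}-\phi_0(x))\,x^{-s/2-1}\,dx$ would continue analytically to a half-plane strictly containing $\xs=\xQ$, contradicting the poles of $1/\xz(s)$ in $F(s)$ with real parts arbitrarily close to $\xQ$. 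This is a purely complex-analytic contradiction requiring no phase alignment and no hypothesis beyond $\xQ<1$; it is the ``Landau'' half of the Landau--Littlewood tradition you cite, not the Diophantine half.

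For persistence over strings, your heuristic that the saddle and phases shift by $o(1)$ across a length-$L$ window is morally right but informal. The paper does this more cleanly: it bounds $|\phi_0'(x)|\le c_\xe x^{\xQ/2-1+\xe}$ directly from the Mellin integral and invokes a mean-value lemma (Lemma~\ref{B:lem:DerivDecay}) to upgrade $\xO_\pm$ to the string-persistent relation $\xO_\pm^*$, then transfers this to $\phi_*$ via Lemma~\ref{B:lem:Omega*Approx}.
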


\begin{theorem}[cf.\ \ref{it:xl}]
    \label{thm:nonRHBiasymp}
    If $\fr12 < \xQ < 1$, then $\spnxl$ is biasymptotic with factor $\fr12$.
\end{theorem}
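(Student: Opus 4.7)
The plan is to adapt to the Liouville setting the strategy underlying Theorem~\ref{thm:muBiasymp}. The starting point is the generating function identity
\[
F_{\xl}(x) = \sum_{n \geq 0} p(n,\xl)\,x^n = \prod_{j \geq 1}\bigl(1 - \xl(j)\,x^j\bigr)^{-1}.
\]
Expanding $\log F_{\xl}(e^{-t})$ as a double series and using complete multiplicativity of $\xl$, the terms split by the parity of the inner summation index: the odd-$m$ contributions are governed by $L(s,\xl) = \xz(2s)/\xz(s)$ and the even-$m$ ones by $\xz(s)$. A direct Mellin computation then yields
\[
\mathcal{M}\bigl[\log F_{\xl}(e^{-t})\bigr](s) = \xG(s)\,\xz(s+1)\Bigl[\bigl(1 - 2^{-(s+1)}\bigr)L(s,\xl) + 2^{-(s+1)}\xz(s)\Bigr].
\]
The rightmost pole, at $s = 1$ contributed by $\xz(s)$, gives $\log F_{\xl}(e^{-t}) \sim \xz(2)/(4t)$ as $t \to 0^{+}$, corresponding to the main term $\tfrac12 \xk \sqrt{n} = \sqrt{\xz(2)\,n}$ in $\log p(n,\xl)$. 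Shifting the contour leftward past this pole picks up poles at $s = \xr$ for each nontrivial zero $\xr$ of $\xz(s)$, arising from the factor $1/\xz(s)$ inside $L(s,\xl)$, which inject real oscillatory contributions of order $t^{-\Re\xr}$.

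A saddle-point (Meinardus-type) analysis of Cauchy's formula on the circle $|x| = e^{-t_n}$ with $t_n = \sqrt{\xz(2)/(4n)}$ then extracts $p(n,\xl)$. A parallel analysis of $\log F_{\xl}(-e^{-t})$ shows it has the same leading behavior $\xz(2)/(4t)$: the $m$-odd modification by the factor $(1 + 2^{1-s})L(s,\xl)$ is harmless at $s=1$ since $L(s,\xl)$ vanishes there. Hence the saddles at $x = e^{-t_n}$ and $x = -e^{-t_n}$ contribute at the same exponential order $e^{(1/2)\xk\sqrt n}$. Combining them and carrying through the oscillatory corrections at $t = t_n$ produces a decomposition
\[
p(n,\xl) = \xa_n\,e^{(1/2)\xk\sqrt{n}\,+\,\omega_n^{+}} + (-1)^n \beta_n\,e^{(1/2)\xk\sqrt{n}\,+\,\omega_n^{-}} + \bigl(\text{terms of strictly smaller exponential order}\bigr),
\]
where $\xa_n,\beta_n$ are positive, polynomially bounded pre-factors and $\omega_n^{\pm}$ are real oscillatory sums of residues at the zeros of $\xz(s)$, of order $O(n^{\xQ/2}) = o(\sqrt n)$ given $\tfrac12 < \xQ < 1$.

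The final step is to locate strings of consecutive $n$ on which either $\omega_n^{+}$ or $\omega_n^{-}$ dominates the other with sufficient margin: on the former the positive saddle wins and $\log p(n,\xl) \sim \tfrac12 \xk \sqrt n$; on the latter the saddle at $-e^{-t_n}$ takes over, so $\log\bigl((-1)^n p(n,\xl)\bigr) \sim \tfrac12 \xk \sqrt n$. The existence of arbitrarily long such strings will follow from a Kronecker/Weyl-style simultaneous diophantine approximation applied to the phases $\xg_k \log n$ (where $\xr_k = \xs_k + i\xg_k$ ranges over zeros of $\xz(s)$ with $\xs_k > \tfrac12$), coupled with the binary parity $n \bmod 2$; because $\xQ > \tfrac12$, the amplitudes $n^{\xs_k/2}$ grow and so the requisite dominances can be realized on strings of any prescribed length $L$. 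The main obstacle I anticipate is the uniform control of the saddle-point remainders and of the pre-factors $\xa_n,\beta_n$ over a full length-$L$ string simultaneously, with precision sufficient that the diophantine alignment translates, through Lemma~\ref{lem:logsc}, into the two $\simstar$ relations of Definition~\ref{def:Biasymp} with factor $\tfrac12$.
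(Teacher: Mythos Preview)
Your overall architecture---two saddles at $\pm e^{-t_n}$ contributing at the same exponential order, with the sign of $p(n,\xl)$ governed by an oscillatory correction built from residues at the nontrivial zeros of $\zeta(s)$---matches the paper's setup (Lemma~\ref{L:lem:pp*} and Proposition~\ref{L:prop:pp*Asymp}). The difference and the gap lie in your final step.

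The Kronecker/Weyl argument you propose does not go through. Kronecker's theorem requires rational linear independence of the frequencies $\gamma_k$, which is the Linear Independence hypothesis---unproven, and something the paper is careful to assume \emph{only} when stating the conditional Theorem~\ref{R:thm:LargeResSubs}, not here. Without it you cannot force the phases $\gamma_k \log n$ to align as you wish. A second obstacle is that the hypothesis $\tfrac12<\Theta<1$ gives zeros with real part arbitrarily close to $\Theta$, but there is no a priori bound on how many are relevant, nor any attained ``dominant'' zero; so truncating to finitely many $\gamma_k$ and controlling the tail is itself nontrivial at the precision you need.

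The paper bypasses all of this with a Landau-type argument (Lemma~\ref{L:lem:phi0Omega}, modeled on Lemma~\ref{M:lem:phi0Omega} and ultimately on Lemma~\ref{lem:Landau}): if $\phi_0(x,\xl)$ were eventually $\le x^{\Theta/2-\epsilon}$, then $\int_1^\infty (x^{\Theta/2-\epsilon}-\phi_0(x))x^{-s/2-1}\,dx$ would extend analytically past $\sigma=\Theta$, contradicting the poles of $H(s)=\eta(s)\,\zeta(2s)\Gamma(s)/\zeta(s)$ coming from $1/\zeta(s)$ with real parts approaching $\Theta$. (The assumption $\Theta>\tfrac12$ enters precisely so that the pole of $\zeta(2s)$ at $s=\tfrac12$ does not interfere.) This gives $\phi_0(x)=\Omega_\pm(x^{\Theta/2-\epsilon})$ with no diophantine input whatsoever. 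The passage from single-point $\Omega_\pm$ to $\Omega_\pm^*$ on arbitrarily long strings is then automatic from the derivative bound $|\phi_0'(x)|\ll x^{\Theta/2-1+\epsilon}$ (Lemma~\ref{B:lem:DerivDecay}), after which Proposition~\ref{B:prop:Biasymp} closes the argument. You should replace the Kronecker/Weyl step with this Landau-plus-derivative mechanism.
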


As mentioned in statement \ref{it:asm}, the behavior of $\spnxl$ on assumption of the Riemann Hypothesis (RH) is less certain than that of $\spnmu$. In particular, additional information regarding the nontrivial zeros of $\xz(s)$ is required to draw conclusions. The Simplicity Hypothesis (SH) asserts that zeros $\rho$ of $\xz(s)$ in the strip $\{0 < \Re(s) < 1\}$ are simple, and we have the following other scenario in which $\spnxl$ is biasymptotic.

\begin{theorem}
    \label{thm:nonsimple}
    Suppose that RH holds but SH does not, i.e., that some zero $\tf12+i\xg$ of $\xz(s)$ has order $m>1$. Then $\spnxl$ is biasymptotic with factor $\fr12$.
\end{theorem}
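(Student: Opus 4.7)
The plan is to mirror the proof of Theorem \ref{thm:nonRHBiasymp}, with the order--$m$ zero $\tf12+i\xg$ taking on the role played there by an off--critical--line zero. The foundation is the integral representation of $p(n,\xl)$ developed in \cite{daniels2023mobius}: upon shifting its Mellin contour past the critical line, $p(n,\xl)$ expresses as a ``default'' term behaving as $(-1)^n e^{\tf12\xk\rt n}$ (and responsible for the empirical pattern in \eqref{eq:empirical}), plus a sum over nontrivial zeros $\rho$ of $\xz(s)$. The contribution of a zero of order $k$ takes the form $(\log n)^{k-1} e^{\tf12\xk\rt n}$ modulated by a slowly oscillating phase of frequency $\Im\rho$, a direct consequence of the residue formula applied to $1/\xz(s)$ at a pole of order $k$.

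First I would isolate the contribution of the distinguished zero. Under RH together with the negation of SH, the zero $\tf12+i\xg$ has order $m\geq 2$, so its contribution dwarfs every other zero's by a factor of $(\log n)^{m-1}$. Pairing with its complex conjugate produces a real leading correction of the form
\[
    D(n) = A\,(\log n)^{m-1} e^{\tf12\xk\rt n}\cos\bigl(\xg\,\xvq(n)+\phi_0\bigr),
\]
where $\xvq$ is an explicit slowly varying function arising from the saddle point, and $A\neq 0$, $\phi_0\in\rr$ are determined by the residue of $1/\xz$ at $\tf12+i\xg$. The resulting decomposition
\[
    p(n,\xl) = (-1)^n M(n) + D(n) + \widetilde E(n)
\]
has $M(n)\sim e^{\tf12\xk\rt n}$ the default main term and $\widetilde E(n) = o\bigl((\log n)^{m-1}e^{\tf12\xk\rt n}\bigr)$; the bound on $\widetilde E$ rests on the same uniform $1/\xz$ estimates underlying Theorems \ref{thm:muBiasymp} and \ref{thm:nonRHBiasymp}.

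Second I would run two string--construction arguments, both relying on a Dirichlet pigeonhole. Since $\xvq(n+1)-\xvq(n)\to 0$, the sequence $(\xg\,\xvq(n)+\phi_0)\bmod 2\pi$ admits arbitrarily long strings on which it lies in any prescribed window. Targeting a window near $0\bmod 2\pi$ forces $D(n)$ positive of magnitude $\gg M(n)$, so $p(n,\xl)>0$ throughout the string and $\logsc p(n,\xl)\simstar\tf12\xk\rt n$. Targeting a window around $\pi/2\bmod\pi$ of width $o\bigl((\log n)^{-(m-1)}\bigr)$ instead forces $D(n)=o\bigl(M(n)\bigr)$, whence $p(n,\xl)$ is dominated by the default term $(-1)^n M(n)$ on the string and $(-1)^n\logsc p(n,\xl)\simstar\tf12\xk\rt n$.

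The principal obstacle is the second targeting: one must produce arbitrarily long strings on which $\xg\,\xvq(n)+\phi_0$ sits in a shrinking (polynomially in $1/\log n$) window around $\pi/2\bmod\pi$. This requires both a quantitative smoothness estimate $\xg(\xvq(n+1)-\xvq(n))=o\bigl((\log n)^{-(m-1)}\bigr)$ and a Dirichlet--type recurrence asserting that the orbit of the phase $\bmod\pi$ approximates each target to the required accuracy infinitely often on progressively longer windows. Verifying these together, using the explicit form of $\xvq$ from the saddle point, is expected to be the main technical step; the error bound on $\widetilde E$ and the positivity targeting for the first string family are, by comparison, routine adaptations of arguments in the companion paper.
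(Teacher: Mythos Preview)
Your decomposition $p(n,\xl)=(-1)^n M(n)+D(n)+\widetilde E(n)$ with $D(n)\asymp (\log n)^{m-1}e^{\tf12\xk\rt n}$ rests on a misreading of where the zeros of $\xz(s)$ enter. There is no Mellin contour for $p(n,\xl)$ itself that one shifts past the critical line; the Mellin representation is for $\Psi(z,\xl)$ and hence for the functions $\phi(x)$, $\phi_*(x)$ of Lemma~\ref{L:lem:pp*}. An order-$m$ zero $\tf12+i\xg$ produces, via the integrand $H(s)x^{s/2}$ in Proposition~\ref{L:prop:pp*Asymp}, a residue of size $x^{1/4}(\log x)^{m-1}\cos(\tf12\xg\log x+\cdots)$ in $\phi_0(x,\xl)\approx\phi_*(x,\xl)$---and $\phi_*$ sits in the \emph{exponent} of the formula $p(n,\xl)=e^{\phi(n)}\big[e^{\phi_*(n)}+(-1)^n e^{-\phi_*(n)}+\cdots\big]$. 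The two saddle contributions therefore differ by the factor $e^{2\phi_*(n)}=\exp\!\big(O(n^{1/4}(\log n)^{m-1})\big)$, which is super-polynomially large or small, never a $(\log n)^{m-1}$ multiplier of $e^{\tf12\xk\rt n}$.

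This also makes your second string construction both misdirected and unnecessary. The alternating behaviour $(-1)^n\logsc p(n,\xl)\simstar\tf12\xk\rt n$ does not arise by driving a correction term to zero through a shrinking phase window (the delicate Diophantine step you flag); it arises whenever $\phi_*(n)$ is large and \emph{negative}, so that $(-1)^n e^{-\phi_*(n)}$ dominates $e^{\phi_*(n)}$ outright. The paper accordingly works entirely at the level of $\phi_*$: one shows $\phi_0(x,\xl)=\xO_\pm(x^{1/4}\log^{m-1}x)$ by the Landau-type argument of Lemma~\ref{M:lem:phi0Omega} (the order-$m$ pole of $H(s)$ at $\tf12+i\xg$ obstructs analytic continuation of the relevant Dirichlet integral; cf.\ \cite{montgomery2007multiplicative}*{p.~467}), upgrades to $\xO_\pm^*$ via the derivative bound \eqref{L:eq:phi0DerivBB} and Lemma~\ref{B:lem:DerivDecay}, and feeds this into Proposition~\ref{B:prop:Biasymp} with $x^{A-\xe}$ replaced by $x^{1/4}\log^{m-1}x$. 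No phase targeting, and no control of the full sum over zeros, is needed.
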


For the sake of consistency between the formal versions of \ref{it:mu}, \ref{it:xl}, and \ref{it:asm}, for fixed $\xa\in(0,1]$ we say a sequence $\spnf$ is \emph{asymptotic with factor $\xa$} if one has 
    \<
        \label{eq:asymp}
        \logsc p(n,f) \sim \xa\xk\rt{n} \qquad (\xk = \pi\rt{2/3}).
    \>
Recalling that $\xm^2$ is the indicator function for the set of squarefree integers, it is immediate from \eqref{eq:LogPn1sim} and \eqref{eq:Erdossim} that $(p(n,1))_\nn$ and $(p(n,\mu^2))_\nn$ are asymptotic with factors $1$ and $\rt{6}/\pi \approx 0.7797$, respectively. In light of relation \eqref{eq:LogPrimessim}, the sequence $(p(n,\pp))_\nn$ is not described by this definition because we require that $\xa$ be constant. On the other hand, one may easily use a different ``normalization'' in \eqref{eq:asymp}, e.g., specify that 
    \[
        \logsc p(n,f) \sim \xa\xk \rt{\fr{2n}{\log n}}.
    \]

If RH and SH hold and there exists some $C > 0$ such that 
    \<
        |1/\xz'(\rho)| \leq C|\rho|
    \>
for all zeros $\rho = 1/2+i\xg$ of $\xz(s)$ on the critical line, we say that the Effective Simplicity Hypothesis (ES) holds with constant $C$. At last we formalize statement \ref{it:asm}.

\begin{theorem}[cf.\ \ref{it:asm}]
    \label{thm:RAsymp}
    Suppose that RH, SH, and ES hold, the latter with constant $C > 0$. There exists a constant $\fc > 0$ such that: if the ES constant $C < \fc$, then
        \<
            (-1)^n \logsc p(n,\xl) \sim \tf12 \xk\rt{n},
        \>
    namely $((-1)^n p(n,\xl))_\nn$ is asymptotic with factor $\fr12$. Moreover, it holds that 
    {\binoppenalty=10000\relpenalty=10000 $\fc > 10^{17881}$}.
\end{theorem}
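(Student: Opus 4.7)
The plan is to build on the explicit-formula machinery of \cite{daniels2023mobius}. First I would extract $p(n,\xl)$ from its generating function $F_\xl(x) = \prod_{k\geq 1}(1-\xl(k)x^k)^{-1}$ by a Cauchy-plus-saddle-point argument, deforming the contour $|x|=r$ into two arcs passing through the saddles at $x = e^{-y_0}$ and $x = -e^{-y_0}$ with $y_0 \asymp \rt{\xz(2)/n}$. This yields a decomposition
\[
p(n,\xl) = p_+(n) + (-1)^n p_-(n) + E(n),
\]
where $p_\pm(n) \sim (2\pi F''_\pm)^{-1/2}\exp\bigl(\log F_\xl(\pm e^{-y_0})+ny_0\bigr)$ and $E(n)$ is a contour-arc remainder controlled as in \cite{daniels2023mobius}.

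Next I would derive explicit Mellin--Barnes expansions
\[
\log F_\xl(\pm e^{-y}) = \frac{1}{2\pi i}\int_{(c)} \xG(s)\, y^{-s}\, D_\pm(s)\, ds,
\]
where $D_\pm(s)$ is built from $\xz(s)$, $\xz(2s)$, $\xz(s+1)$ and $2^{-s}$-twists arising (in the $-$ case) from separating odd and even parts via the identity $(-1)^{km}\xl(k)^m = ((-1)^k\xl(k))^m$. Shifting the contour past $\Re(s)=-\xe$, under RH and SH the poles encountered are: a simple pole at $s=1$, common to both signs, giving the leading term $\xz(2)/(4y)$; a pole at $s=\tf12$ from $\xz(2s)$ whose residues differ between $D_+$ and $D_-$ by an explicit $2^{-1/2}$-twist and produce a positive \emph{gap}
\[
\xD(y) := \log F_\xl(-e^{-y}) - \log F_\xl(e^{-y}) = c_\xl\, y^{-1/2} + O(1)
\]
for a computable positive constant $c_\xl$; $\log y$ corrections from $s=0$; and a simple pole at every nontrivial zero $\rho=\tf12+i\xg$, each contributing
\[
R_\pm(\rho;y) = \frac{\xG(\rho)\,\xz(2\rho)\,\xz(\rho+1)}{\xz'(\rho)}\, r_\pm(\rho)\, y^{-\rho}
\]
for bounded explicit twist coefficients $r_\pm(\rho)$.

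The core step is to compare the gap $\xD(y_0)$ with the zero sums $Z_\pm(y_0) := \sum_\rho R_\pm(\rho;y_0)$ at the saddle, both being of order $y_0^{-1/2} \asymp n^{1/4}$. Writing
\[
(-1)^n p(n,\xl) = p_-(n)\Bigl[1 - \exp\bigl(-\xD(y_0) + Z_+(y_0) - Z_-(y_0) + o(1)\bigr)\Bigr] + O\bigl(E(n)\bigr),
\]
the bracket is bounded below by $1 - \exp\bigl(-(c_\xl - 2CT)y_0^{-1/2}\bigr)$ once ES is invoked with constant $C$, where
\[
T := \sum_\rho |\rho|\cdot |\xG(\rho)\,\xz(2\rho)\,\xz(\rho+1)\, r_\pm(\rho)|
\]
captures the zero-sum magnitude. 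Stirling's formula $|\xG(\tf12+i\xg)|\sim \rt{2\pi/|\xg|}\,e^{-\pi|\xg|/2}$, combined with $|\xz(1+2i\xg)|\ll \log|\xg|$ and $|\xz(\tf32+i\xg)|\leq \xz(\tf32)$, makes $T$ absolutely convergent and dominated by a few low-lying zeros; setting $\fc := c_\xl/(2T)$ then delivers $(-1)^n \logsc p(n,\xl) \sim \tf12\xk\rt{n}$ whenever $C < \fc$.

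The main obstacle is the effective bookkeeping needed to justify the explicit bound $\fc > 10^{17881}$. This entails: (a) producing a positive lower bound for $c_\xl$ as a concrete algebraic expression in $\xG(\tf12)$, $\xz(\tf12)$, $\xz(\tf32)$ and $2^{-1/2}$; (b) producing an explicit upper bound for $T$ by isolating the first few nontrivial zeros via their numerical locations and bounding the tail past a height $T_0$ through an explicit van~Mangoldt-type count for $N(T)$ together with a sharp Stirling inequality for $|\xG(\tf12+it)|$; and (c) verifying that the saddle-point remainder $E(n)$ and all residues at $s\leq 0$ are subdominant compared to the gap. The engine driving $\fc$ to such an astronomical magnitude is the exponentially rapid decay $e^{-\pi|\xg|/2}$ of the Gamma factor along the critical line, so that carrying these bounds rigorously through the explicit formula is precisely what converts the qualitative comparison into a genuinely effective result.
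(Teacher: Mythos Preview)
Your qualitative strategy is essentially that of the paper: the two-saddle decomposition, the reduction to a gap function expressed via a Mellin integral, a quasi-exact formula with a dominant non-oscillatory term $-\fc_0 x^{1/4}$ (your $c_\xl y_0^{-1/2}$) coming from the pole of $\xz(2s)$ at $s=\tf12$, oscillatory contributions $\fc_\rho x^{\rho/2}$ from the nontrivial zeros, and the comparison $\sum_\rho|\fc_\rho|<\fc_0$ via Stirling decay of $|\xG(\tf12+i\xg)|$. One technicality you elide is that the two saddles are not located at the \emph{same} $y_0$; the paper works with distinct solutions $\rho,\rho_*$ of separate saddle equations and invests real effort (the $E,E_*,\cE$ analysis of Lemma~\ref{lem:EBounds} and section~\ref{sec:Lpp*}) in showing their difference is $O(x^{-\xvq+\xe})$, which is what collapses the gap cleanly to a single integral $\phi_0(x,\xl)$.

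There is, however, a genuine gap in your treatment of the quantitative claim $\fc>10^{17881}$. Your threshold $\fc=c_\xl/(2T)$ applies the ES bound $|1/\xz'(\rho)|\leq C|\rho|$ to \emph{every} zero, so $T$ is bounded below by the contribution of $\rho_1=\tf12+i\xg_1$ with $\xg_1\approx 14.13$: that term alone is of size $|\rho_1|\cdot|\xG(\rho_1)|\cdot O(1)\asymp 14\cdot e^{-\pi\xg_1/2}\asymp 10^{-8}$. Hence your scheme can deliver at best $\fc\sim c_\xl/10^{-8}\sim 10^{8}$, not $10^{17881}$. The paper's device is to \emph{not} invoke ES for low zeros at all: it computes $\sum_{|\xg|\leq 2^{15}}|\fc_\rho|$ numerically from the actual values of $\xz'(\rho)$ for the first $\approx 39{,}000$ zeros, finding this head sum $\approx 1.27\times 10^{-9}\ll\fc_0\approx 1.28$. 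Only the tail $|\xg|>2^{15}$ is handled via ES, where the Gamma factor contributes roughly $e^{-\pi\cdot 2^{15}/2}$ and yields a tail bound $\lesssim C\cdot 10^{-17881}$; the condition $\fc_0>(\text{head})+C\cdot 10^{-17881}$ then gives $\fc>10^{17881}$. Your step~(b) must therefore be upgraded from ``the first few'' zeros to a direct residue computation (bypassing ES) for all zeros up to height of order $2^{15}$, reserving ES strictly for the tail.
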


Finally we plot $\logsc p(n,\xm)$ and $\logsc p(n,\xl)$ for some small $n$ in Figures \ref{fig:lscpn250} and \ref{fig:lscpn100k}.

{\begin{figure}[ht]
    \centering
    \includegraphics[width=0.83\textwidth,keepaspectratio]{"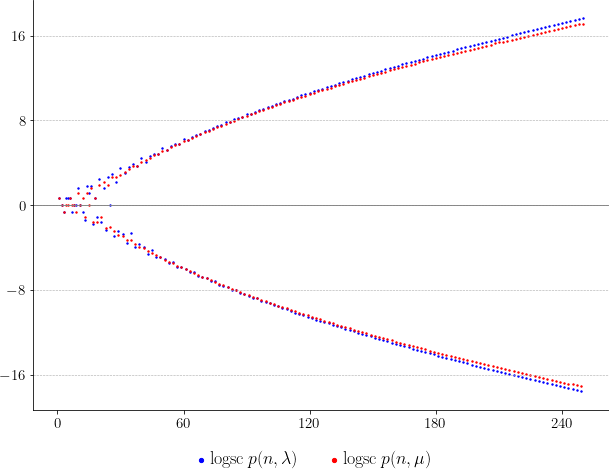"}
    \caption{Plot of $\logsc p(n,\xm)$ and $\logsc p(n,\xl)$ for $n \leq 250$.}
    \label{fig:lscpn250}
\end{figure}}
    
{\begin{figure}[ht]
    \centering
    \includegraphics[width=0.85\textwidth,keepaspectratio]{"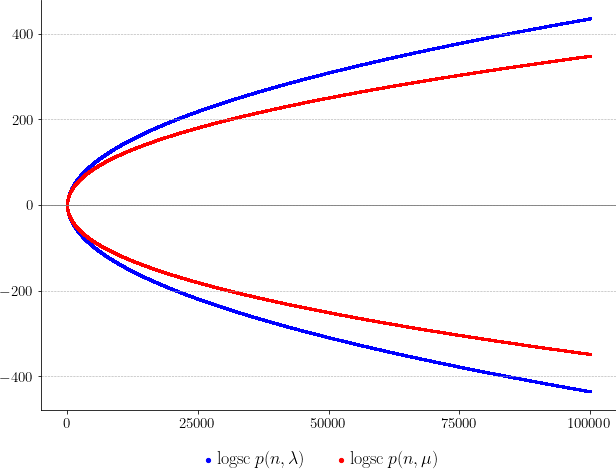"}
    \caption{Plot of $\logsc p(n,\xm)$ and $\logsc p(n,\xl)$ for $n \leq 100,000$.}
    \label{fig:lscpn100k}
\end{figure}}

\section{Notation and structure}
\label{sec:prelim}

Expressions $O(g(x))$ with $g(x) > 0$ indicate some quantity bounded by a constant multiple of $g(x)$, and $f(x) \less g(x)$ indicates that $f(x) = O(g(x))$ for all sufficiently large $x$. The relation $f(x) \asymp g(x)$ holds if $f(x) \less g(x)$ and $g(x) \less f(x)$. Quantities of the form $X_0(A)$ indicate some $X_0 > 0$ depending on $A$. Statements involving $\xe$ are understood to hold for all sufficiently small $\xe > 0$ unless noted otherwise, and constants' dependencies on $\xe$ are generally suppressed.  

For real $\xa$ let $e(\xa)=\exp(2\pi i\xa)$. Expressions $n \equiv a$ (mod $q$) are abbreviated as $n \equiv a \mod{q}$. For nonnegative integer $j$ we let $\PsiD{j}(z) = (z\partial_z)^j \Psi(z)$. Functions $\Phi(z,f)$, $\Psi(z,f)$, etc. are simply written as $\Phi(z)$, $\Psi(z)$, etc. when the $f$ used is clear or when the discussions apply to general $f:\nn\to\signs$.

For $\rho, \rho_* \in (0,1)$ it is convenient to write
    \<
        \label{eq:X}
        X = \fr{1}{\log(1/\rho)} \qqand X_* = \fr{1}{\log(1/\rho_*)}
    \>
so that
    \<
        \label{eq:rho}
        \rho = e^{-1/X} \qqand \xr_* = e^{-1/X_*},
    \>
and this convention is maintained throughout this paper.

For complex $s$ we write $s=\xs+it$, and $\xs$ and $t$ always represent the real and imaginary parts of some $s\in\cc$, respectively. Line integrals $\int_{\xs-i\infty}^{\xs+i\infty}$ are abbreviated as $\int_{\lh<\xs\rh>}$, and integrals $\frac{1}{2\pi i} \int$ are abbreviated using dashed integrals $\dint$.

\subsection*{Structure and Overview.}
In section \ref{sec:recall} we begin our analyses by recalling the relevant asymptotic relations and formulae for $p(n,\xm)$ and $p(n,\xl)$ established in \cite{daniels2023mobius}. We then reframe these formulae into  forms that allow us to identify the oscillatory quantities responsible for the (bi-)asymptotic behaviors. In particular, we find that both $p(n,\xm)$ and $p(n,\xl)$ are expressible as
    \<
        \label{eq:pnfTemplate}
        p(n,f) = e^{\phi(n)} \lf[ e^{\phi_*(n)} + (-1)^n e^{-\phi_*(n)} + O(n^{-1/5}e^{|\phi_*(n)|}) \rh]   
    \>
for some $\phi(n) \asymp \rt{n}$ and some slowly oscillating $\phi_*(n)$ that is $o(\rt{n})$ (both different for each $f$). Thus, the majority of our analyses concern the functions $\phi_*(n,\xm)$ and $\phi_*(n,\xl)$.

In section \ref{sec:OmegaPrelim} we recall the $\xO$ asymptotic relation and establish a template for our biasymptotic results using \eqref{eq:pnfTemplate}. In addition we collect some conditional bounds on $\xz(s)$.

Sections \ref{sec:Mpp*} and \ref{sec:MBiasymp} specifically focus on $\phi(n,\mu)$ and $\phi_*(n,\xm)$ and the proof of Theorem \ref{thm:muBiasymp}, and after this sections \ref{sec:Lpp*} and \ref{sec:LBiasymp} do the same for $\phi(n,\xl)$ and $\phi_*(n,\xl)$ but with the caveat that the Riemann Hypothesis (RH) does \emph{not} hold. Sections \ref{sec:Riemann}--\ref{sec:RiemannExact} then specially treat $\phi_*(n,\xl)$ and $p(n,\xl)$ on assumption of RH and discuss other assumptions concerning the nontrivial zeros of $\xz(s)$.

Finally, section \ref{sec:explicit} revisits $\phi_*(n,\xm)$ on assumption of RH and makes rough estimates on when biasymptotic behavior of $\spnmu$ might be first explicitly observed (assuming such behavior is ever adopted at all).

\section{\tops{A review of results on $p(x,\xm)$ and $p(n,\xl)$}}
\label{sec:recall}

Our analyses of the potential (bi-)asymptotic behaviors of $\spnmu$ and $\spnxl$ require detailed analyses of the generating functions of these sequences and how they affect the asymptotic formulae for $p(n,\xm)$ and $p(n,\xl)$ established in \cite{daniels2023mobius}*{Thms.\ 1.2--1.5}. We briefly recall these formulae and the necessary tools from sections 1 and 2 of \cite{daniels2023mobius}. 

The functions $\Phi(z) = \Phi(z,f)$ and $\Psi(z)=\Psi(z,f)$ associated to the sequence $\spnf$ are defined for $|z|<1$ via
    \<
        \label{eq:Phi}
    	\Phi(z) = \prod_{n=1}^\infty \pfrac{1}{1-f(n)z^n} \qqand \Psi(z) = \sum_{k,n=1}^\infty \fr{f^k(n)}{k} z^{nk}
    \>
so that 
    \<
        \Phi(z) = \exp \Psi(z).
    \>
By comparison to the functions $\Phi(z,1)$ and $\Psi(z,1)$ one sees that $\Phi(z,f)$ and $\Psi(z,f)$ are analytic for $|z|<1$, so by Cauchy's theorem one has
	\<
		\label{eq:pnfInt}
		p(n,f) = \fr{1}{2\pi i}\int_{|z|=\rho} \Phi(z)z^{-n-1} \,dz = \rho^{-n} \int_0^1 \Phi(\rho e(\xa)) e(-n\xa) \,d\xa
	\>
for all $0<\rho<1$. The integrals \eqref{eq:pnfInt} provide a natural definition for the quantities $p(x,f)$ for a general $x>0$ by simply using $x$ in place of $n$ therein.

To obtain asymptotic results on $p(x,f)$ for a given $x>0$ using \eqref{eq:pnfInt}, we select $\xr=\xr(x)$ and $\xr_*=\xr_*(x)$ so as to minimize the quantities $\Psi(\rho)-x\log\rho$ and $\Psi(-\rho_*)-x\log\rho_*$. The minima of these expressions occur when $\xr$ and $\xr_*$ satisfy the \emph{saddle-point equations}
    \begin{align}
        \label{eq:sdp}
        \rho \Psi'(\rho) &= x, \tag{sp} \\
        \label{eq:sdp*}
        (-\rho_*)\Psi'(-\rho_*) &= x, \tag{sp$*$}
    \end{align}
where $(-\xr_*)\Psi'(-\xr_*)$ is $z\Psi'(z)$ evaluated at $z=-\rho_*$. If $f:\nn\to\signs$ takes positive and negative values, one should not expect that equations \eqref{eq:sdp} and \eqref{eq:sdp*} have solutions in $(0,1)$ for all $x>0$. However, when $f=\xm$ or $f=\xl$ and $x$ is sufficiently large, both of these equations have \emph{unique} solutions $\xr,\xr_*\in(0,1)$, as established in \cite{daniels2023mobius}*{Lem.\ 8.1} and an analogous argument regarding $\Psi(z,\xl)$.

When $x$ is sufficiently large and $r\in(0,1)$ is a solution to one of the equations \eqref{eq:sdp} and \eqref{eq:sdp*} with $\Psi(z)=\Psi(z,\xm)$, we have the following relations between $x$, $r$, and certain derivatives of $\Psi(\pm r)$. To preempt any confusion, we note that $(-1)^{j}\PsiD{j}(-r)$ and $\PsiD{j}(-r)$ are uniquely valued independently of their interpretations vis-\`a-vis the chain-rule.

\begin{theorem}
    \label{M:thm:Relations}
    Let $A > 0$ and let $\Psi(z)=\Psi(z,\xm)$. For all sufficiently large $x$, if $r = r(x)$ satisfies at least one of the equations \eqref{eq:sdp} and \eqref{eq:sdp*}, then
        \begin{align*}
            - x \log r &= \tf12 \rt{x} \lf[ 1+O((\logx)^{-A}) \rh], \\
            \Psi(\pm r) &= \tf12 \rt{x} \lf[ 1+O((\logx)^{-A}) \rh].
        \end{align*}
    In general, for these $x$ and $r$ and for $j \geq 1$ one has
        \begin{align*}
            (\pm 1)^j\Psid{j}(\pm r) &= 2^{j-1}j!\: x^{\fr{j+1}{2}} \lf[ 1+O_j((\logx)^{-A}) \rh], \\
            \PsiD{j}(\pm r) &= 2^{j-1}j!\: x^{\fr{j+1}2} \lf[ 1+O_j((\logx)^{-A}) \rh].
        \end{align*}
\end{theorem}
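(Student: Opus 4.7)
The plan is to derive all four asymptotics from a single Mellin--Barnes representation of $\Psi(\pm r,\mu)$, extract the logarithmic derivatives by differentiating under the integral sign, and finally pass from $X$-asymptotics to $x$-asymptotics by inverting the saddle-point equation.

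Writing $r = e^{-1/X}$ and applying $e^{-m/X} = \frac{1}{2\pi i}\int_{(\sigma)}(m/X)^{-s}\Gamma(s)\,ds$ for $\sigma>0$, I would interchange sum and integral in $\Psi(r,\mu) = \sum_{n,k\geq1}\mu^k(n)r^{nk}/k$. Splitting the $k$-sum by parity (so that $\mu^k$ becomes $\mu$ or $\mu^2$) brings in the Dirichlet series $1/\zeta(s)$ and $\zeta(s)/\zeta(2s)$ and produces, for $\sigma$ large,
\[
\Psi(r,\mu) = \frac{1}{2\pi i}\int_{(\sigma)} X^{s}\,\Gamma(s)\,\zeta(s+1)\left[\frac{1-2^{-s-1}}{\zeta(s)} + \frac{2^{-s-1}\zeta(s)}{\zeta(2s)}\right]ds,
\]
with a parallel expression for $\Psi(-r,\mu)$ in which $X$ is replaced by $X_*$ and the bracket is modified by factors involving $(1\pm 2^{-s})$ (arising from the twist $(-1)^{nk}$, via a one-line Euler-product calculation). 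Shifting the contour leftward past $\Re s = 1$, the only pole crossed is the simple one of $\zeta(s)/\zeta(2s)$ at $s=1$, whose residue in the full integrand equals $X/4$ (resp.\ $X_*/4$). Even the classical de la Vall\'ee-Poussin zero-free region leaves room to place the new line of integration strictly to the left of $1$, where the vertical decay of $\Gamma(s)$ combined with polynomial-in-$\log|t|$ bounds on $1/\zeta(s)$ renders the shifted integral $O\bigl(X(\log X)^{-A}\bigr)$ for every fixed $A>0$; hence $\Psi(\pm r,\mu) = \tfrac{X}{4} + O\bigl(X(\log X)^{-A}\bigr)$ (with $X$ or $X_*$ as appropriate).

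Since $(z\partial_z)X^{s} = s\,X^{s+1}$, the same analysis applied to $\PsiD{j}(\pm r)$ only inserts the Pochhammer factor $s(s+1)\cdots(s+j-1)$ into the integrand, so that the residue at $s=1$ becomes $\tfrac{j!}{4}X^{j+1}$; hence $\PsiD{j}(\pm r) = \tfrac{j!}{4}X^{j+1}\bigl[1+O_j((\log X)^{-A})\bigr]$. Specialising to $j=1$ and invoking the saddle-point equation $\PsiD{1}(\pm r) = x$ gives $x = (X^2/4)[1+O((\log X)^{-A})]$, which inverts to $X = 2\sqrt{x}\,[1+O((\log x)^{-A})]$. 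Substitution then produces the four stated formulae: $-x\log r = x/X \sim \tfrac12 \sqrt{x}$, $\Psi(\pm r) = X/4 \sim \tfrac12\sqrt{x}$, and $\PsiD{j}(\pm r) \sim \tfrac{j!}{4}(2\sqrt x)^{j+1} = 2^{j-1}j!\,x^{(j+1)/2}$, each with error $1+O_j((\log x)^{-A})$. The transfer from $\PsiD{j}$ to $(\pm 1)^j\Psi^{(j)}(\pm r)$ is obtained from the Stirling-number identity $\PsiD{j}(z) = \sum_{k=1}^{j} S(j,k)\, z^k\Psi^{(k)}(z)$ solved recursively in $j$: each lower-order piece $z^k\Psi^{(k)}(z)$ is only $O(X^{k+1})$, so the top-order term $z^{j}\Psi^{(j)}(z)$ absorbs the main contribution, and $r^{j}\to 1$ converts $(\pm r)^j\Psi^{(j)}(\pm r)$ into $(\pm 1)^j\Psi^{(j)}(\pm r)$ without worsening the error.

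The main obstacle is the uniform, explicit bookkeeping of the $(\log x)^{-A}$ error through the saddle-point inversion $X\leftrightarrow 2\sqrt x$ and through the Stirling recursion at growing $j$. One must also check, on the $-r$ side, that no additional residues are missed in the strip swept by the contour: the factor $(1-2^{-s})^{-1}$ introduced by the twist has its zeros on $\Re s = 0$ and is holomorphic on $\Re s > 0$, while $1/\zeta(2s)$ is holomorphic on $\Re s > 1/2$, so only the single residue at $s=1$ contributes in either case.
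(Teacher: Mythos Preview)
Your approach is essentially the same as the one the paper recalls from \cite{daniels2023mobius} (see the discussion around equation \eqref{eq:PsiD1Int}): a Mellin--Barnes representation of $\Psi(\pm r,\mu)$, a contour shift picking up the residue $\tfrac14 X^{j+1}\cdot j!$ from the pole of $\zeta(s)/\zeta(2s)$ at $s=1$, the classical zero-free region to bound the $1/\zeta(s)$ piece by $O(X^{j+1}(\log X)^{-A})$, and finally inversion of the saddle-point relation $x=\tfrac14 X^2[1+O((\log X)^{-A})]$.

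One point deserves sharpening. You write that the zero-free region ``leaves room to place the new line of integration strictly to the left of $1$,'' but of course no fixed vertical line to the left of $\Re s=1$ is unconditionally zero-free. What is actually done (and what the paper sketches) is to deform the contour only for $|t|\le\exp(\sqrt{\log X})$ into the region $\sigma\ge 1-c/\log|t|$, staying on $\sigma=1$ for larger $|t|$; the exponential decay of $\Gamma$ then kills the tail. This is standard, and your error term $O(X(\log X)^{-A})$ is correct, but the phrasing as written would not survive a literal reading. The remaining details---the Pochhammer factor from $(z\partial_z)^j X^s=s(s+1)\cdots(s+j-1)X^{s+j}$, the Stirling-number transfer between $\Psi_{(j)}$ and $z^j\Psi^{(j)}$, and the check that the extra $(1-2^{-s})^{-1}$ factor on the $-r$ side introduces no poles in $\Re s>0$---are all handled correctly.
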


We now recall the asymptotic formula for $p(x,\mu)$ in terms of $\Phi(z,\xm)$ and $\Psi(z,\xm)$.

\begin{theorem}
    \label{M:thm:asymp}
    Let $\Phi(z)=\Phi(z,\xm)$ and $\Psi(z)=\Psi(z,\xm)$, and for sufficiently large $x$ let $\rho$ and $\rho_*$ denote the unique solutions to \eqref{eq:sdp} and \eqref{eq:sdp*}, respectively. As $x \to \infty$, one has
        \<
            \label{M:eq:pxmuForm}
            p(x,\xm) = \fr{\rho^{-x}\Phi(\rho)}{\rt{2\pi \PsiD{2}(\rho)}} \big[ 1+O(x^{-1/5}) \big] + (-1)^{-x} \fr{\rho_*^{-x}\Phi(-\rho_*)}{\rt{2\pi \PsiD{2}(-\rho_*)}} \big[ 1+O(x^{-1/5}) \big],
        \>
    where $(-1)^{-x} = \exp(-\pi ix)$.
\end{theorem}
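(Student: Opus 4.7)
The plan is to derive Theorem \ref{M:thm:asymp} via the saddle point method applied to the Cauchy integral \eqref{eq:pnfInt}, with the essential twist that \emph{two} saddles must be resolved simultaneously: the critical point $z=\rho$ (coming from \eqref{eq:sdp}) and the critical point $z=-\rho_*$ (coming from \eqref{eq:sdp*}). Because \eqref{eq:pnfInt} holds on any circle $|z|=r$ with $r\in(0,1)$, I would integrate over a hybrid closed contour $\mathcal{C}$ around the origin that coincides with $|z|=\rho$ on an arc containing $z=\rho$, with $|z|=\rho_*$ on an arc containing $z=-\rho_*$, and is joined by two short radial segments at angles bounded away from $0$ and $\tfrac12$. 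By Cauchy's theorem, $p(x,\mu)=\frac{1}{2\pi i}\oint_{\mathcal{C}}\Phi(z)z^{-x-1}\,dz$, and each of the two arcs then carries a standard saddle-point analysis of its own.

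To extract the first main term, parametrize $z=\rho e(\alpha)$ near $\alpha=0$ and expand
\[
\Psi(\rho e(\alpha)) \;=\; \Psi(\rho) + 2\pi i\alpha\,\PsiD{1}(\rho) - 2\pi^2\alpha^2\,\PsiD{2}(\rho) + O\!\big(|\alpha|^3\PsiD{3}(\rho)\big);
\]
the saddle equation \eqref{eq:sdp}, namely $\PsiD{1}(\rho)=x$, cancels the linear term against the factor $e(-x\alpha)$, and integration of the resulting Gaussian over a major arc $|\alpha|\le\delta$ yields $\rho^{-x}\Phi(\rho)/\sqrt{2\pi\PsiD{2}(\rho)}$. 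Theorem \ref{M:thm:Relations}, which supplies $\PsiD{j}(\rho)\asymp x^{(j+1)/2}$ for $j\ge 1$, then controls the cubic and higher Taylor remainders after $\delta$ is optimized to balance the Gaussian tail against the cubic error. A parallel calculation on $|z|=\rho_*$ via the change of variables $z=-\rho_* e(\beta)$, $\alpha=\tfrac12+\beta$, uses \eqref{eq:sdp*} to produce the second main term; the prefactor $e(-x/2)=e^{-i\pi x}=(-1)^{-x}$ arises naturally from the translation.

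The main obstacle is the uniform control of the ``off-saddle'' pieces and the justification that these fit within the announced $O(x^{-1/5})$ relative error. The radial connector segments have length $|\rho-\rho_*|$, which is small as $\rho,\rho_*\to 1^-$, and they sit at angles where $|\Phi(z)z^{-x-1}|$ is already exponentially smaller than at the saddles, so a pointwise bound suffices there. The genuine minor arcs on each circle require a Hardy--Ramanujan/Vaughan-type bound of the form $\log|\Phi(\rho e(\alpha))|\le \Psi(\rho)-cx^{\eta}$ for $\alpha$ outside small neighborhoods of $0$ and $\tfrac12$ (and an analogous bound on $|z|=\rho_*$). Such bounds are exactly what underlies the estimates \eqref{eq:bigOh} and the saddle-point framework in \cite{daniels2023mobius}, so I would invoke them directly. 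Summing the two main terms with these error estimates then produces \eqref{M:eq:pxmuForm}.
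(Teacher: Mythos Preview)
The paper does not itself prove Theorem \ref{M:thm:asymp}; it is recalled from \cite{daniels2023mobius}*{Thms.\ 1.2--1.5} as background (see the opening of \S\ref{sec:recall}). So there is no ``paper's own proof'' here to compare against line by line.

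That said, your outline is the correct architecture and matches what the cited work does: two saddle points at $z=\rho$ and $z=-\rho_*$, a local Gaussian analysis at each using the relations of Theorem \ref{M:thm:Relations}, and minor-arc bounds to dispose of the rest of the contour. One refinement worth flagging: rather than the hybrid two-radius contour you describe, the argument in \cite{daniels2023mobius} integrates over a single circle for each saddle and then shows that the two resulting contributions can simply be added, since the minor-arc estimates on $|z|=\rho$ already absorb the contribution near $\alpha=\tfrac12$ and vice versa (the difference between $\rho$ and $\rho_*$ is negligible at the level of these bounds by Theorem \ref{M:thm:Relations}). Your radial-connector device works too, but it introduces an extra estimate you do not strictly need. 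The specific exponent $-1/5$ emerges from optimizing the major-arc width against the cubic Taylor error $|\alpha|^3\PsiD{3}(\rho)\asymp|\alpha|^3 x^{2}$ versus the Gaussian tail; you correctly identify this balancing but leave the arithmetic implicit.
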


For the case of $\Phi(z,\xl)$ and $\Psi(z,\xl)$ we have results highly similar to those above. Due to its ubiquity in our discussions, we let
    \<
        \fz = \xz(2) = \tf{\pi^2}{6},
    \>
and we recall that
    \[
        \xk = 2\sqrt{\xz(2)} = \pi \rt{2/3}.    
    \]
\begin{theorem}
	\label{L:thm:Relations}
	Let $A>0$ and let $\Psi(z)=\Psi(z,\xl)$. For all sufficiently large $x$, if $r = r(x)$ satisfies at least one of the equations \eqref{eq:sdp} and \eqref{eq:sdp*}, then
		\begin{align*}
			- x \log r &= \tf14 \xk\rt{n} \lf[ 1+O((\logx)^{-A}) \rh], \\
            \Psi(\pm r) &= \tf14 \xk\rt{n} \lf[ 1+O((\logx)^{-A}) \rh].
        \end{align*}
    In general, for these $x$ and $r$ and for $j \geq 1$ one has
        \begin{align*}
            (\pm1)^{j}\Psid{j}(\pm r) &=  j!\: (\xk/\fz)^{j-1} x^{\fr{j+1}2} \lf[ 1+O_j((\logx)^{-A}) \rh], \\
            \PsiD{j}(\pm r) &= j!\: (\xk/\fz)^{j-1} x^{\fr{j+1}2} \lf[ 1+O_j((\logx)^{-A}) \rh],
        \end{align*}
\end{theorem}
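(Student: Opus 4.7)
The plan is to mirror the Mellin--transform / saddle-point proof of Theorem~\ref{M:thm:Relations} already carried out in \cite{daniels2023mobius}, exchanging the Dirichlet series $1/\xz(s)$ attached to $\xm$ for the series $\xz(2s)/\xz(s)$ attached to $\xl$. Since $\xl$ is completely multiplicative with $\xl^{k}=\xl$ for odd $k$ and $\xl^{k}\equiv 1$ for even $k$, a termwise Mellin transform in $y=-\log r>0$ gives, for $\Re(s)$ sufficiently large,
\[
  \int_0^\infty \Psi(\pm e^{-y},\xl)\,y^{s-1}\,dy
  =\xG(s)\lf[\fr{\xz(2s)}{\xz(s)}A_\pm(s)+\xz(s)B_\pm(s)\rh],
\]
where $A_\pm(s),B_\pm(s)$ are explicit Dirichlet series built out of $\sum_{k\text{ odd}}k^{-s-1}$, $\sum_{k\text{ even}}k^{-s-1}$ and, in the ``$-$'' case, the sign factor $n\mapsto(-1)^n$; all are analytic and easy to evaluate at $s=1$. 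In the strip $\Re(s)>\xQ$ the only singularity of the bracket is the simple pole of $\xz(s)$ at $s=1$ in the second summand (the first summand vanishes there, since $\xz(2s)/\xz(s)\to 0$ as $s\to 1$), and a direct residue calculation gives $\Res_{s=1}\{\cdot\}=\xG(1)\,B_\pm(1)=\xz(2)/4=\fz/4$ in both cases.

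The next step is to shift the Mellin contour past $s=1$ using the classical unconditional zero-free region of $\xz(s)$ together with the standard polynomial bounds for $\xz(s)$, $1/\xz(s)$ and $\xz(2s)/\xz(s)$ on vertical lines inside that region, exactly as in the $\xm$-case. This yields
\[
  \Psi(\pm e^{-y},\xl)=\fr{\fz}{4y}\lf[1+O((\log (1/y))^{-A})\rh]
\]
for any fixed $A>0$. Applying $(-\partial_y)^j=(z\partial_z)^j|_{z=\pm e^{-y}}$ differentiates under the Mellin inversion and preserves the form of the residue expansion, producing
\[
  \PsiD{j}(\pm r)=\fr{\fz\,j!\,X^{j+1}}{4}\lf[1+O_j((\log X)^{-A})\rh],
\]
where $X=1/\log(1/r)$ and $r\in\{\xr,\xr_*\}$.

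Feeding this into the saddle-point equations \eqref{eq:sdp} and \eqref{eq:sdp*}, which read $\PsiD{1}(\pm r)=x$, gives $\fz X^2/4\sim x$ and hence $X\sim 2\rt{x/\fz}=4\rt{x}/\xk$ via $\xk^2=4\fz$. Substituting back yields the stated leading terms $-x\log r\sim \xk\rt{x}/4$, $\Psi(\pm r)\sim\xk\rt{x}/4$, and (after simplifying with $\xk^2/\fz=4$)
\[
  \PsiD{j}(\pm r)\sim \fr{\fz\,j!}{4}\lf(\fr{4\rt{x}}{\xk}\rh)^{j+1}=j!\lf(\fr{\xk}{\fz}\rh)^{j-1}x^{(j+1)/2},
\]
each with error $O_j((\logx)^{-A})$ since $\log X\sim \tf12\logx$. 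The ordinary derivatives $\Psid{j}(\pm r)$ follow from the Stirling identity $(z\partial_z)^j=\sum_{k\leq j}S(j,k)z^k\partial_z^k$: the $k<j$ terms are of strictly lower order in $\rt{x}$, and since $r\to 1$ the factor $r^{j}$ is absorbed into the error term, while the sign $(\pm 1)^j$ in the statement arises from $(-\xr_*)^j=(-1)^j\xr_*^j$ when converting $\PsiD{j}(-\xr_*)$ to $\Psid{j}(-\xr_*)$.

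The main obstacle is not conceptual but technical: propagating the $(\logx)^{-A}$ error uniformly in $x$ and in $j$ through the contour shift, the differentiation of the Mellin inversion, the implicit inversion $X=X(x)$ of the saddle-point relation, and the conversion from $\PsiD{j}$ to $\Psid{j}$ via Stirling numbers. Each of these bookkeeping steps has been carried out in full in \cite{daniels2023mobius} for the $\xm$-case; since the $\xl$-Mellin transform has the same analytic structure in a neighborhood of $s=1$ (a single simple pole with an elementary residue), the arguments transfer with only cosmetic changes to the residue data.
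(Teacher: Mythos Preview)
Your outline is correct and follows essentially the same approach as the paper: the theorem is not proved in the present paper but is recalled from \cite{daniels2023mobius}, and your Mellin--transform plus contour--shift into the classical zero--free region is precisely the method sketched there (and partially redisplayed in Section~\ref{sec:Mpp*}, equations~\eqref{eq:PsiD1Int}--\eqref{eq:PsiD1}, for the $\xm$-case). Your residue computation $\Res_{s=1}=\fz/4$ is right, as is the back-substitution via $\xk^2=4\fz$ and the $\PsiD{j}\leftrightarrow\Psid{j}$ conversion through Stirling numbers.
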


\begin{theorem}
    \label{L:thm:pnlAsymp}
    Let $\Phi(z)=\Phi(z,\xl)$ and $\Psi(z)=\Psi(z,\xl)$, and for sufficiently large $x$ let $\rho$ and $\rho_*$ denote the unique solutions to \eqref{eq:sdp} and \eqref{eq:sdp*}, respectively. As $x\to\infty$ one has
        \<
            \label{L:eq:pxxlForm}
            p(x,\xl) = \fr{\rho^{-x}\Phi(\rho)}{\rt{2\pi \PsiD{2}(\rho)}} \big[ 1+O(x^{-1/5}) \big] + (-1)^{-x} \fr{\rho_*^{-x}\Phi(-\rho_*)}{\rt{2\pi \PsiD{2}(-\rho_*)}} \big[ 1+O(x^{-1/5}) \big],
        \>
    where $(-1)^{-x} = \exp(-\pi ix)$.
\end{theorem}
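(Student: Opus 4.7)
The plan is to establish \eqref{L:eq:pxxlForm} by saddle-point analysis of the Cauchy integral representation \eqref{eq:pnfInt}, following the same template used for Theorem \ref{M:thm:asymp} in \cite{daniels2023mobius} but with Theorem \ref{L:thm:Relations} in place of its M\"obius analogue. Because $\xl$ takes both positive and negative values, as $r\to 1^-$ the modulus of the integrand $\Phi(re(\xa))e(-x\xa)$ on the circle $|z|=r$ develops two comparable peaks: one near $\xa=0$, with saddle-point equation \eqref{eq:sdp} at $r=\rho$, and one near $\xa=\tf12$, with saddle-point equation \eqref{eq:sdp*} at $r=\rho_*$. Since $\Phi$ is analytic on $\{0<|z|<1\}$, Cauchy's theorem permits deforming portions of $|z|=\rho$ onto $|z|=\rho_*$, so that each of the two saddle contributions is evaluated on the circle natural to it.

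For the first saddle I parameterize $z=\rho e(\xa)$ over a major arc $|\xa|\leq\xd$, with $\xd$ a small negative power of $x$ to be optimized. The Taylor expansion
    \[
        \Psi(\rho e(\xa)) - 2\pi i x\xa = \Psi(\rho) - 2\pi^2\PsiD{2}(\rho)\,\xa^2 + O\lh( |\xa|^3\,\PsiD{3}(\rho) \rh),
    \]
whose linear term vanishes by \eqref{eq:sdp}, together with Theorem \ref{L:thm:Relations} giving $\PsiD{2}(\rho)\asymp x^{3/2}$ and $\PsiD{3}(\rho)\asymp x^{2}$, reduces the major arc integral to a Gaussian of value $\Phi(\rho)/\rt{2\pi\PsiD{2}(\rho)}$ up to relative error $O(x^{-1/5})$; multiplying by $\rho^{-x}$ yields the first summand of \eqref{L:eq:pxxlForm}. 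For the second saddle, the substitution $\xa=\tf12+\xb$ turns the integrand on $|z|=\rho$ into $e^{-\pi i x}\Phi(-\rho e(\xb))e(-x\xb)$; deforming this arc onto $|z|=\rho_*$, with the short radial connecting segments shown to contribute negligibly, and expanding about $\xb=0$ using $(-\rho_*)\Psi'(-\rho_*)=x$ together with the $\PsiD{j}(-\rho_*)$ estimates, produces the second summand of \eqref{L:eq:pxxlForm}, with the factor $(-1)^{-x}=e^{-\pi ix}$ arising from $e(-x\tf12)$.

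The main obstacle is the \emph{minor arc} bound: one must show that on the complement of the two narrow arcs about $\xa=0$ and $\xa=\tf12$, the integrand is damped enough that the total leftover contribution is absorbed into the $O(x^{-1/5})$ errors of the main terms. This demands quantitative control over $|\Phi(re(\xa))|=\exp\Re\Psi(re(\xa))$ away from $\xa\in\{0,\tf12\}$ of Hardy--Ramanujan type, showing a quantitative drop $\Re\Psi(re(\xa))-\Psi(r)\leq -c\:x^{\xq}$ for some $c,\xq>0$ on the minor arcs; for $\xl$ this must additionally accommodate the cancellations already built into $\Psi(z,\xl)$ from the sign changes of $\xl$, so that the analysis proceeds uniformly on both circles $|z|=\rho$ and $|z|=\rho_*$. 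Granting such a minor arc bound, summing the two saddle contributions and collecting errors yields \eqref{L:eq:pxxlForm}.
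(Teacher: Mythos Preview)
The paper does not prove this theorem here; it is recalled from \cite{daniels2023mobius}*{Thms.\ 1.2--1.5} as background, and your sketch is essentially the outline of the saddle-point argument carried out in that reference (with the $\xl$-case treated there in close parallel to the $\xm$-case). In that sense you are not diverging from the paper's route, merely reconstructing what the cited reference does.

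That said, as written your proposal is a plan rather than a proof: you correctly isolate the minor-arc bound as the crux but then explicitly ``grant'' it. The substantive work in \cite{daniels2023mobius} is precisely this step---showing that $\Re\Psi(re(\xa),\xl)$ drops by a positive power of $x$ away from $\xa\in\{0,\tfrac12\}$, uniformly over the arcs in question---and it is not automatic from the Hardy--Ramanujan template because $\xl$ has sign changes. If you intend to supply a self-contained proof you must actually prove this bound (for instance via the Dirichlet-series representation of $\Psi(z,\xl)$ and the relations of Theorem~\ref{L:thm:Relations}); otherwise a citation to the relevant minor-arc lemmas in \cite{daniels2023mobius} is what is needed. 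Your contour-deformation device (moving the arc near $\xa=\tfrac12$ from radius $\rho$ to $\rho_*$) is sound, and the radial connecting pieces are indeed negligible since $\rho-\rho_*=O(x^{-1/2}\cE)$ by Theorem~\ref{L:thm:Relations}.
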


\begin{remark}
    For brevity, the definition $(-1)^{-x}=e^{-\pi ix}$ is understood going forward.
\end{remark}

\subsection{\tops{$\phi(x)$ and $\phi_*(x)$.}}
\label{sec:pp*}

As seen in formula \eqref{M:eq:pxmuForm}, whether or not the quantities $p(n,\xm)$ alternate sign depends on which of the two main terms in the formula is larger. To put \eqref{M:eq:pxmuForm} into a form better-suited to our comparisons, we first rewrite \eqref{M:eq:pxmuForm} as
    \<
        \label{eq:pxmupsi}
        p(x,\mu) = e^{\psi(x)} \big[ 1+O(x^{-\nfr15}) \big] + (-1)^{-x} e^{\psi_*(x)} \big[ 1+O(x^{-\nfr15}) \big],
    \>
where $\psi(x) = \psi(x,\mu)$ and $\psi_*(x)=\psi_*(x,\mu)$ are given by
    \begin{align}
        \label{M:eq:psix}
        \psi(x) &= \Psi(\rho) - x \log(\rho) - \tf12 \log(2\pi\PsiD{2}(\rho)), \\
        \label{M:eq:psi*x}
        \psi_*(x) &= \Psi(-\rho_*) - x \log(\rho_*) - \tf12 \log(2\pi\PsiD{2}(-\rho_*)).
    \end{align}

Using the elementary manipulation 
    \[
        e^u + (-1)^{-x} e^v = e^{\fr12(u+v)} \pth{ e^{\fr12(u-v)} + (-1)^{-x} e^{-\fr12(u-v)} },
    \]
and bounding $e^w + e^{-w} = O(e^{|w|})$, we rewrite \eqref{eq:pxmupsi} (for sufficiently large $x$) as
    \<
        \label{M:eq:pxmuAsymp3}
        p(x,\mu) = e^{\phi(x)} \Big[ e^{\phi_*(x)} + (-1)^{-x} e^{-\phi_*(x)} + O(x^{-\nfr15}e^{|\phi_*(x)|}) \Big],
    \>
where 
    \[ 
        \phi(x) := \tf12(\psi(x)+\psi_*(x)) \qqand \phi_*(x) := \tf12(\psi(x)-\psi_*(x)).
    \] 
Explicitly, using \eqref{M:eq:psix} and \eqref{M:eq:psi*x} we have
    \begin{align}
        \label{M:eq:phix0}    
        \phi(x) &= \tf12(\Psi(\xr)+\Psi(-\xr_*)) - \tf12 x \log(-\xr\xr_*) - \tf14 \log\lf( 4\pi^2\PsiD{2}(\xr)\PsiD{2}(-\xr_*) \rh), \\
        \label{M:eq:phi*x0}
        \phi_*(x) &= \tf12(\Psi(\xr)-\Psi(-\xr_*)) - \tf12 x \log(\xr/\xr_*) - \tf14 \log\lf( \PsiD{2}(\xr)/\PsiD{2}(-\xr_*) \rh).
    \end{align}

With the relations of Theorem \ref{M:thm:Relations} we may quickly check that
    \begin{align*}
        \phi(x) &= \sqrt{x} + O_A(\sqrt{x} (\logx)^{-A}), \\
        \phi_*(x) &= O_A(\sqrt{x} (\logx)^{-A}),
    \end{align*}
again demonstrating the bound \eqref{eq:bigOh} for $p(n,\xm)$. In addition, since $(-1)^{-x} = +1$ if $x=2k$ for some $k \in \nn$, we find (unconditionally) that
    \[
        p(2k,\mu) = e^{\rt{2k} + o(\rt{2k})} \lf[ e^{o(\rt{2k})} \rh],
    \] 
confirming the relation \eqref{eq:asymp} for $\log p(2k,\mu)$. Using the conventions \eqref{eq:rho} for $X$ and $X_*$ we rewrite \eqref{M:eq:phix0} and \eqref{M:eq:phi*x0} as
    \begin{align*}
        \phi(x) &= \fr12 \lf[ \Psi(\xr)+\Psi(-\xr_*) + \frac{x(X + X_*)}{XX_*} - \fr12 \log\lf( 4\pi^2 \PsiD{2}(\xr)\PsiD{2}(-\xr_*) \rh) \rh], \\
        \phi_*(x) &= \fr12 \lf[ \Psi(\xr) - \Psi(-\xr_*) - \frac{x(X - X_*)}{XX_*} - \fr12 \log\pfrac{\PsiD{2}(\xr)}{\PsiD{2}(-\xr_*)} \rh],
    \end{align*}
which are the specific formulae we analyze in section \ref{sec:Mpp*}.

For easy reference we summarize these manipulations in the following lemma.

\begin{lemma}
    \label{M:lem:pp*}
    Fix $A>0$, let $\Psi(z)=\Psi(z,\mu)$, and for all sufficiently large $x$ let $\xr$ and $\xr_*$ be the unique solutions to equations \eqref{eq:sdp} and \eqref{eq:sdp*}, respectively. As $x\to\infty$ one has
        \[
            p(x,\mu) = e^{\phi(x)} \Big[ e^{\phi_*(x)} + (-1)^{-x} e^{-\phi_*(x)} + O(x^{-1/5}e^{|\phi_*(x)|}) \Big],
        \]
    where $(-1)^{-x} = \exp(-\pi ix)$ and
        \begin{align}
            \label{M:eq:phix}
            \phi(x) &= \fr12 \lf[ \Psi(\xr)+\Psi(-\xr_*) + \frac{x(X + X_*)}{XX_*} - \fr12 \log\lf( 4\pi^2 \PsiD{2}(\xr)\PsiD{2}(-\xr_*) \rh) \rh], \\
            \label{M:eq:phi*x}
            \phi_*(x) &= \fr12 \lf[ \Psi(\xr) - \Psi(-\xr_*) - \frac{x(X - X_*)}{XX_*} - \fr12 \log\pfrac{\PsiD{2}(\xr)}{\PsiD{2}(-\xr_*)} \rh].
        \end{align}
    Moreover, as $x\to\infty$ one has
        \begin{align*}
            \phi(x) &= \sqrt{x} + O_A(\sqrt{x} (\logx)^{-A}), \\
            \phi_*(x) &= O_A(\sqrt{x} (\logx)^{-A}).
        \end{align*}
\end{lemma}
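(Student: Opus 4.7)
The statement is essentially a consolidation of the manipulations displayed in the preceding subsection, so my plan is to assemble those pieces carefully and then verify the order-of-magnitude claims using Theorem \ref{M:thm:Relations}. The starting point is Theorem \ref{M:thm:asymp}, which already supplies the two-term asymptotic
\[
    p(x,\mu) = \frac{\rho^{-x}\Phi(\rho)}{\sqrt{2\pi\PsiD{2}(\rho)}}\bigl[1+O(x^{-1/5})\bigr] + (-1)^{-x}\frac{\rho_*^{-x}\Phi(-\rho_*)}{\sqrt{2\pi\PsiD{2}(-\rho_*)}}\bigl[1+O(x^{-1/5})\bigr].
\]
Using $\Phi = \exp\Psi$ and taking logarithms, each main term equals $\exp\psi(x)$ resp.\ $\exp\psi_*(x)$ with $\psi,\psi_*$ as in \eqref{M:eq:psix}--\eqref{M:eq:psi*x}. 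Thus the formula \eqref{eq:pxmupsi} is immediate.

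Next I would apply the elementary identity
\[
    e^u + (-1)^{-x}e^v = e^{(u+v)/2}\bigl(e^{(u-v)/2}+(-1)^{-x}e^{-(u-v)/2}\bigr)
\]
with $u=\psi(x)$, $v=\psi_*(x)$, which naturally produces the functions $\phi=(\psi+\psi_*)/2$ and $\phi_*=(\psi-\psi_*)/2$ and recovers the claimed display \eqref{M:eq:phix0}--\eqref{M:eq:phi*x0}. The substitution $-\log\rho = 1/X$, $-\log\rho_* = 1/X_*$ converts \eqref{M:eq:phix0}--\eqref{M:eq:phi*x0} into the stated \eqref{M:eq:phix}--\eqref{M:eq:phi*x}. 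The only bookkeeping subtlety lies in the error term: after factoring out $e^{\phi(x)}$, the two $O(x^{-1/5})$ contributions appear as $O(x^{-1/5})e^{\phi_*(x)}$ and $O(x^{-1/5})e^{-\phi_*(x)}$, and since $e^{\phi_*} + e^{-\phi_*} \ll e^{|\phi_*|}$ these combine into the single term $O(x^{-1/5}e^{|\phi_*(x)|})$ claimed in the lemma. This is the only point that requires attention, and it is routine.

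For the orders of magnitude I would invoke Theorem \ref{M:thm:Relations} directly. It gives
\[
    \Psi(\pm r) = \tfrac12\sqrt{x}\bigl[1+O_A((\log x)^{-A})\bigr],\qquad -x\log r = \tfrac12\sqrt{x}\bigl[1+O_A((\log x)^{-A})\bigr],
\]
applied at both $r=\rho$ and $r=\rho_*$. Adding these four asymptotics and dividing by $2$ gives the leading $\sqrt{x}$ term of $\phi(x)$, and subtracting produces a total of $O_A(\sqrt{x}(\log x)^{-A})$ for $\phi_*(x)$. The logarithmic correction $-\tfrac14\log(4\pi^2\PsiD{2}(\rho)\PsiD{2}(-\rho_*))$ contributes only $O(\log x)$ (since $\PsiD{2}(\pm r)\asymp x^{3/2}$ by the $j=2$ case of Theorem \ref{M:thm:Relations}), which is absorbed into the $O_A(\sqrt{x}(\log x)^{-A})$ error. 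The analogous logarithmic piece in $\phi_*(x)$ is likewise $O(\log x)$ and is absorbed.

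The entire argument is essentially algebraic manipulation plus application of Theorem \ref{M:thm:Relations}; I do not anticipate any genuine obstacle. The one place where I would be careful is the error-term consolidation described above, and in confirming that the restriction to sufficiently large $x$ (so that $\rho$ and $\rho_*$ uniquely exist, as noted after \eqref{eq:sdp*}) is used at exactly the right point.
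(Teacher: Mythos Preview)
Your proposal is correct and follows essentially the same route as the paper: the lemma is explicitly stated there as a summary of the manipulations you describe (starting from Theorem~\ref{M:thm:asymp}, passing through $\psi,\psi_*$, applying the identity $e^u+(-1)^{-x}e^v=e^{(u+v)/2}(e^{(u-v)/2}+(-1)^{-x}e^{-(u-v)/2})$, and then invoking Theorem~\ref{M:thm:Relations}). Your handling of the error-term consolidation and the logarithmic corrections matches the paper's treatment.
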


A similar result holds for $p(x,\xl)$ but we state it below at a more appropriate time.

\section{Preliminaries for \tops{$\xO$}--results}
\label{sec:OmegaPrelim}

\subsection{\tops{The $\xO$ and $\xO^*$ relations.}}
\label{secsub:OmegaPrelim}

In this section let $f(x)$ and $g(x)$ denote real-valued functions defined on $(0,\infty)$ with $g(x) > 0$ everywhere. We recall that $f(x)=\xO_+(g(x))$ if for some $C > 0$ there exist arbitrarily large $x$ such that 
    \<
        \label{eq:OmegaP}
        f(x) > C g(x),
    \>
and that $f(x)=\xO_-(g(x))$ if for some $C > 0$ there exist arbitrarily large $x$ such that 
    \<
        \label{eq:OmegaM}
        f(x) < -C g(x).
    \>

Our analyses of $\phi_*(x,\xm)$ and $\phi_*(x,\xl)$ require a stronger relation than that of $f(x) = \xO_{\pm}(g(x))$. Indeed, the $x$ in the relations \eqref{eq:OmegaP} and \eqref{eq:OmegaM} may not be integers, nor can we a priori guarantee that $f(y)>Cg(y)$ continues to hold for $y$ in large neighborhoods of said $x$. We thus define the following stronger relation.

\begin{definition}
    \label{def:Omega*}
    We say that $f(x) = \xO_+^*(g(x))$ if there exists $C>0$ with the following property: For all $r>0$, there exist arbitrarily large $x$ such that
        \<
            \label{eq:Omega*Plus}
            f(y) > C g(x) \qquad \text{for $y \in [x,x+r)$}.
        \>
    We similarly say that $f(x) = \xO_-^*(g(x))$ if there exists $C>0$ such that: For all $r > 0$, there exist arbitrarily large $x$ such that 
        \<
            \label{eq:Omega*Minus}
            f(y) < -C g(x) \qquad \text{for $y \in [x,x+r)$}.
        \>
\end{definition}

Definition \ref{def:Omega*} in hand, we aim to establish $\xO^*$--results for $\phi_*(x,f)$ by approximating it with a simpler quantity $\phi_0(x,f)$ for which we can establish $\xO^*$--relations. We record two elementary lemmata used repeatedly in our analyses towards this goal.

\begin{lemma}
	\label{B:lem:DerivDecay}
	Suppose that $f(x)$ is continuously differentiable on $(0,\infty)$ and that there exist constants $a$ and $A$, with $0 < a < 1$ and $A>0$, such that $f(x) = \xO_{\pm}(x^{a})$ and
		\<
            \label{B:eq:DerivDecay}
            |f'(x)|\leq \fr{A}{x^{1-a}} \qquad (x > 0).
        \>
	Then $f(x) = \xO_{\pm}^*(x^a)$.
\end{lemma}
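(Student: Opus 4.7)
The plan is to upgrade $\xO_{\pm}(x^a)$ to $\xO_{\pm}^*(x^a)$ by using the derivative bound to show that, once $x$ is large relative to $r$, the total variation of $f$ on the interval $[x,x+r)$ is negligible compared to $x^a$. Consequently, if $f(x)$ already exceeds (or falls below) a constant multiple of $x^a$ at some large $x$, the same inequality (with a slightly worse constant) will persist throughout $[x,x+r)$, which is precisely what Definition \ref{def:Omega*} requires.

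Concretely, I would treat only the $\xO_+$ case since the $\xO_-$ statement follows by replacing $f$ by $-f$. By hypothesis there is a constant $C_0 > 0$ and an unbounded set of $x$ with $f(x) > C_0 x^a$. For $y \ge x$, the fundamental theorem of calculus combined with $\ABS{f'(t)} \le A t^{a-1}$ gives
\[
    \ABS{f(y)-f(x)} \,\le\, A \int_x^y t^{a-1}\,dt \,=\, \fr{A}{a}\pth{y^a - x^a}.
\]
For $y \in [x,x+r)$, the mean value theorem applied to $t \mapsto t^a$, combined with the fact that $t \mapsto t^{a-1}$ is decreasing (since $a-1 < 0$), yields $y^a - x^a \le a r x^{a-1}$, and therefore $\ABS{f(y)-f(x)} \le A r x^{a-1} = (Ar/x)\cdot x^a$.

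To close the argument, given any fixed $r > 0$ I would restrict attention to those arbitrarily large $x$ from the $\xO_+$ witness set that additionally satisfy $x > 2Ar/C_0$. For such $x$ the estimate above yields $\ABS{f(y)-f(x)} < \tf{C_0}{2} x^a$ on $[x,x+r)$, and hence $f(y) > C_0 x^a - \tf{C_0}{2} x^a = \tf{C_0}{2} x^a$ throughout that interval. Setting $C := C_0/2$ in Definition \ref{def:Omega*} then verifies $f(x) = \xO_+^*(x^a)$. No real obstacle is anticipated here; the proof is pure integration bookkeeping, and the one point worth noting explicitly is that imposing the additional constraint $x > 2Ar/C_0$ discards at most finitely many values, so the set of valid $x$ remains unbounded for each fixed $r$.
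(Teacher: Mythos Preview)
Your proof is correct and follows essentially the same approach as the paper: both use the derivative bound together with the mean value theorem to show that the variation of $f$ over $[x,x+r)$ is at most $Ar\,x^{a-1}$, which is negligible compared to $x^a$ once $x$ is large. The only cosmetic difference is that the paper applies the mean value theorem directly to $f$ (writing $f(y)=f(x)+f'(\xi)(y-x)$), whereas you route through the fundamental theorem of calculus and then the mean value theorem for $t\mapsto t^a$; the resulting inequality and conclusion are identical.
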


\begin{proof}
	Let $r>0$. Because $f(x) = \xO_+(x^{a})$ there exists $C>0$ such that $f(x) > C x^a$ for arbitrarily large values of $x$. Thus, suppose that $x$ is large and $f(x) > C x^{a}$. By the mean value theorem and \eqref{B:eq:DerivDecay}, for all $y \in (x,x+r)$ there exists $\xi \in (x,y)$ such that
		\<
			\label{B:eq:fMeanValue}
			f(y) = f(x) + f'(\xi)(y-x) > C x^a - A \xi^{a-1} r.
		\>

	As $a-1 < 0$ one has $\xi^{a-1} < x^{a-1}$, whereby
		\<
			\label{B:eq:fLowerBB}
			f(y) > C x^a - A x^{a-1} r = x^{a} \pth{C - A r / x} \qquad (y \in (x,x+r)).
		\>
    The relation $f(x) = \xO_+^*(x^a)$ follows at once since $x$ may be arbitrarily large, and the relation $f(x) = \xO_-^*(x^{a})$ is proved similarly.
\end{proof}

\begin{lemma}
    \label{B:lem:Omega*Approx}
    Suppose that $f(x) = \xO_\pm^*(x^{a-\xe})$ for some $a > 0$ and that 
        \[
            g(x) = f(x) + O(x^{a-\Qdel+\xe})
        \] 
    for some $0 < \Qdel \leq a$ and all $x>0$. Then $g(x) = \xO_\pm^*(x^{a-\xe})$. 
\end{lemma}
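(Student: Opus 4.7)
The plan is to derive the $\xO_\pm^*$--behavior of $g(x)$ directly from that of $f(x)$, exploiting the fact that the error term $x^{a-\Qdel+\xe}$ is of strictly smaller order than the target rate $x^{a-\xe}$. Indeed, since $\Qdel>0$ is fixed and $\xe$ is understood to be sufficiently small (per the convention in Section \ref{sec:prelim}), one has $a-\Qdel+\xe < a-\xe$, so the error vanishes against the main term after dividing by $x^{a-\xe}$.

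First I would unpack Definition \ref{def:Omega*} applied to $f$: there exists $C_1 > 0$ such that, for each $r>0$, there are arbitrarily large $x$ on which $f(y) > C_1 x^{a-\xe}$ throughout $y \in [x,x+r)$. Fixing such a pair $(r,x)$ and using the stated error bound $|g(y)-f(y)| \leq K y^{a-\Qdel+\xe}$ together with the crude estimate $y \leq x+r \leq 2x$ (valid once $x \geq r$), I would obtain
    \[
        g(y) \geq f(y) - K' x^{a-\Qdel+\xe} > C_1 x^{a-\xe} - K' x^{a-\Qdel+\xe}
    \]
on $[x,x+r)$, where $K'$ depends only on $K$, $a$, $\Qdel$, and $\xe$. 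The subtracted term is at most $(C_1/2) x^{a-\xe}$ once $x$ is sufficiently large (depending on $r$), yielding $g(y) > (C_1/2) x^{a-\xe}$ on $[x,x+r)$ and hence $g(x) = \xO_+^*(x^{a-\xe})$ with constant $C_1/2$. The $\xO_-^*$ case is handled by the same argument with inequalities reversed.

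There is essentially no obstacle here: the argument is routine bookkeeping around the order comparison $a-\Qdel+\xe < a-\xe$. The only fine point is that the threshold above which $x$ must be taken depends on $r$, but this is permitted because Definition \ref{def:Omega*} requires only that such $x$ exist arbitrarily large \emph{after} $r$ has been fixed; so the quantifier order in the definition of $\xO_\pm^*$ is preserved under the transfer from $f$ to $g$.
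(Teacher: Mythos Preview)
Your proof is correct and follows essentially the same route as the paper's: both bound the error $|g(y)-f(y)|$ on $[x,x+r)$ by a constant times $x^{a-\Qdel+\xe}$ (you via $y\le 2x$, the paper via $y\le x+r$ and factoring out $x^{a-\xe}$), and then observe that this is $o(x^{a-\xe})$ since $\xe<\Qdel/2$, so the $\xO_+^*$ constant survives with an $r$-dependent threshold on $x$. Your remark on the quantifier order is exactly the point the paper makes with ``say $x>x_0(r)$.''
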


\begin{proof}
    Let $0<\xe<\min\{a,\Qdel/2\}$ and let $C>0$ be a constant satisfying the $\xO_+^*$ condition for $f(x)$; thus, for $r>0$ there exist arbitrarily large $x$ such that
        \[
            f(y) > C x^{a-\xe} \qquad \text{for all $y \in [x,x+r)$.}    
        \]
    
    On this interval then, for some $c_\xe > 0$ we have
        \[
            g(y) > C x^{a-\xe} - c_\xe(x+r)^{a-\Qdel+\xe} = x^{a-\xe} \lh(C - c_\xe x^{2\xe-\Qdel} (1+r/x)^{a-\Qdel+\xe}\rh).
        \]
    Since $2\xe-\Qdel<0$ and $a-\Qdel+\xe < a-\xe$ by assumption, the assertion that $g(x) = \xO_+^*(x^{a-\xe})$ follows by taking $x$ to be sufficiently large, say $x > x_0(r)$. That $g(x) = \xO_-^*(x^{a-\xe})$ is proved similarly.
\end{proof}

The following proposition provides a template for our biasymptotic results.

\begin{proposition}
    \label{B:prop:Biasymp}
    Let $f:\nn\to\signs$. Suppose that there exists $\xa\in(0,1]$, $A \in (0,\fr12)$, and real functions $\phi(x)$ and $\phi_*(x)$ such that: As $x\to\infty$ one has
        \begin{subequations}
        \begin{align}
            \label{B:eq:pxfForm}
            p(x,f) &= e^{\phi(x)} \Big[ e^{\phi_*(x)} + (-1)^x e^{-\phi_*(x)} + O(x^{-\nfr15}e^{|\phi_*(x)|}) \Big], \\
            \label{B:eq:phiForm}
            \phi(x) &= \xa\xk\rt{x} + O(x^{A+\xe}), \\
            \label{B:eq:phi*BigO}
            \phi_*(x) &= O(x^{A+\xe}), \\
            \label{B:eq:phi*Omega*}
            \phi_*(x) &= \xO^*_\pm(x^{A-\xe}).
        \end{align}
        \end{subequations}
    Then $\spnf$ is biasymptotic with factor $\xa$.
\end{proposition}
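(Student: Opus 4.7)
The plan is to verify directly each of the two $\simstar$ relations demanded by Definition \ref{def:Biasymp}, exploiting the hypotheses \eqref{B:eq:phi*Omega*} to produce arbitrarily long strings on which precisely one of the terms $e^{\phi_*(n)}$ and $(-1)^n e^{-\phi_*(n)}$ dominates the bracketed expression in \eqref{B:eq:pxfForm}. Fix $L \in \nn$ and take $\xe > 0$ small enough that $A+\xe<\fr12$; throughout, constants implicit in the $O$-notation are absolute.

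To obtain $\logsc p(n,f) \simstar \xa\xk\rt{n}$, I would apply $\xO_+^*$ to $\phi_*$ with parameter $r = L+1$ to select arbitrarily large $x$ with $\phi_*(y) > C x^{A-\xe}$ on $[x,x+L+1)$, and set $N = \lceil x\rceil$ so that every integer $n$ in the string $(N,\ldots,N+L-1)$ lies in $[x,x+L)$ and satisfies $\phi_*(n) \to \infty$ as $x \to \infty$. Then \eqref{B:eq:pxfForm} factors as
\[
    p(n,f) = e^{\phi(n)+\phi_*(n)} \lh[ 1 + (-1)^n e^{-2\phi_*(n)} + O(n^{-1/5}) \rh] = e^{\phi(n)+\phi_*(n)}\,(1+o(1)),
\]
so $p(n,f) > 0$ and $\logsc p(n,f) = \log p(n,f) + o(1) = \phi(n)+\phi_*(n) + o(1)$. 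Substituting \eqref{B:eq:phiForm} and \eqref{B:eq:phi*BigO}, the right-hand side becomes $\xa\xk\rt{n} + O(n^{A+\xe}) = \xa\xk\rt{n}\,(1+o(1))$ uniformly on the string. Since $N$ may be taken arbitrarily large, this supplies the first $\simstar$ relation.

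The second relation $(-1)^n \logsc p(n,f) \simstar \xa\xk\rt{n}$ I would treat symmetrically via $\xO_-^*$: one obtains arbitrarily large $N_*$ and length-$L$ strings on which $\phi_*(n) < -C x^{A-\xe} \to -\infty$. On these strings \eqref{B:eq:pxfForm} rearranges to
\[
    p(n,f) = (-1)^n e^{\phi(n)-\phi_*(n)} \lh[ 1 + (-1)^n e^{2\phi_*(n)} + (-1)^n O(n^{-1/5}) \rh] = (-1)^n e^{\phi(n)-\phi_*(n)}\,(1+o(1)),
\]
giving $\sgn p(n,f) = (-1)^n$ and $|p(n,f)| \to \infty$, so that
\[
    (-1)^n \logsc p(n,f) = \log(|p(n,f)|+1) = \phi(n) - \phi_*(n) + o(1) = \xa\xk\rt{n}\,(1+o(1))
\]
uniformly on the string. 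Thus $\spnf$ is biasymptotic with factor $\xa$.

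There is no substantive obstacle here; the content of the argument is bookkeeping built around two specific features of the hypotheses. First, it is essential that \eqref{B:eq:phi*Omega*} invokes the $\xO^*$-relation rather than $\xO$: only the former guarantees that the dominant exponential retains its dominance throughout an entire string of consecutive integers, which in turn keeps $O(n^{-1/5}e^{|\phi_*(n)|})$ a genuine relative error in \eqref{B:eq:pxfForm}. Second, the restriction $A<\fr12$ in \eqref{B:eq:phiForm}--\eqref{B:eq:phi*BigO} forces $n^{A+\xe} = o(\rt{n})$, which is precisely what is needed for comparison against the target $\xa\xk\rt{n}$. The serious work of the paper lies in producing \eqref{B:eq:phi*Omega*} for $\phi_*(n,\xm)$ and $\phi_*(n,\xl)$; the present proposition merely packages these properties into a statement about biasymptotic behavior.
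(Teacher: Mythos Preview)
Your proposal is correct and follows essentially the same approach as the paper's proof: use the $\xO^*_+$ (resp.\ $\xO^*_-$) hypothesis to produce arbitrarily long strings on which $\phi_*$ is large and positive (resp.\ negative), factor out the dominant exponential in \eqref{B:eq:pxfForm}, and then invoke \eqref{B:eq:phiForm}--\eqref{B:eq:phi*BigO} together with $A+\xe<\tfrac12$ to conclude. The paper makes the absorption of $e^{-2\phi_*(n)}$ into the error slightly more explicit by arranging $e^{-Cx^{A-\xe}}<(x+r)^{-1/5}$, but your $1+o(1)$ handling is equivalent.
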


\begin{proof}
    Let $L\in\nn$, let $0<\xe<A$, and let $C$ be a constant satisfying the $\xO^*_+$ condition for $\phi_*(x)$ in equation \eqref{B:eq:phi*Omega*}. In addition, fix an arbitrarily small $\xd>0$ to play the role of $\xe$ in the definition of the relation $\!\!\simstar$.
    
    Let $r > 0$ and suppose that $x$ is such that $\phi_*(y) > C x^{A-\xe}$ for $y \in [x,x+r)$. Further supposing that $x$ is large enough that $e^{-C x^{A-\xe}} < (x+r)^{-1/5}$, it follows that $e^{-\phi_*(y)} < y^{-1/5}$ for all $y \in [x,x+r)$. For all integers $n \in [x,x+r)$ then, one has
        \<
            \label{B:eq:pnfQuantity}
            p(n,f) = e^{\phi(n)+\phi_*(n)}(1+O(n^{-\nfr15}))
        \>
    and
        \<
            \label{B:eq:LogscAsymp}
            \logsc p(n,f) = \phi(n) + \phi_*(n) + O(n^{-\nfr15}) = \xa\xk\rt{n} + O(n^{A+\xe}).
        \>
    Equation \eqref{B:eq:LogscAsymp} shows that on this same interval, one has
        \<
            \label{B:eq:LogscAsymp2}
            \lh| \fr{\logsc p(n,f)}{\xa\xk\rt{n}} - 1 \rh| = O(n^{A-\nfr12+\xe}),
        \>
    and the assertion that $\logsc p(n,f) \simstar \xa\xk\rt{n}$ follows by taking $r$ large enough that $[x,x+r)$ contains a string of $L$ integers and taking $x$ large enough that the quantity \eqref{B:eq:LogscAsymp2} is less than $\xd$ for all $n \in [x,x+r)$.
    
    Similarly, if $\phi_*(y) < -C x^{A-\xe}$ for all $y \in [x,x+r)$ then one deduces that
        \[
            \begin{aligned}
                p(n,f) = (-1)^n e^{\phi(n)+\phi_*(n)} (1 + O(n^{-\nfr15})) \qquad (n \in [x,x+r))
            \end{aligned}
        \]
    in place of \eqref{B:eq:pnfQuantity}, and the assertion that $(-1)^n \logsc p(n,f) \simstar \xa\xk\rt{n}$ follows. 
\end{proof}

\subsection{\tops{Conditional uniform bounds for $|\xz(s)|^{-1}$.}}
\label{secsub:Littlewood}

We now discuss several lemmata towards establishing bounds on $|\xz(s)|^{-1}$ as $t \to \infty$ when $\xs$ is restricted to some compact interval $[\xs_0,\xs_1] \subset (\xQ,\infty)$, where we recall that
    \[
        \xQ = \sup\{ \Re(\xr) : \xz(\xr)=0 \}.  
    \]

A theorem of Littlewood \cite{littlewood1912} states that, on assumption of RH, i.e., on assumption that $\xQ = 1/2$, one has
    \<
        \label{eq:Littlewood}
        \log\xz(s) \less (\log t)^{2-2\xs+\xe}
    \>
uniformly for $1/2 < \xs_0 \leq \xs \leq 1$. That such an inequality holds with a modified exponent if one has $1/2 < \xQ < 1$ instead is apparent from the proof of \eqref{eq:Littlewood}. 

Despite this evidence and a number of similar conditional bounds appearing in elementary literature, we are unaware of a complete statement and proof of this generalization of Littlewood's result \eqref{eq:Littlewood} in the literature. Thus, in the appendix we include a proof of the following lemma following the proof of \eqref{eq:Littlewood} given in \cite{titchmarsh1986theory}*{Thm.\ 14.2}.

\begin{lemma}
	\label{lem:LittlewoodMod}
	Suppose that $\xQ < 1$. One has
		\<
			\label{eq:LittlewoodMod}
			|\log\xz(s)| \less (\log t)^{\fr{1-\xs}{1-\xQ}+\xe}
		\>
	uniformly for $\xQ < \xs_0 \leq \xs \leq 1$. 
\end{lemma}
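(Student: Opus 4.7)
The plan is to adapt the proof of Littlewood's bound given in Titchmarsh \cite{titchmarsh1986theory}*{Thm.\ 14.2}, replacing the critical line $\xs = 1/2$ by an abscissa $\xs = \xQ + \xd$ with $\xd > 0$ arbitrarily small. Under the hypothesis $\xQ < 1$, the function $\log\xz(s)$ admits a single-valued analytic branch on the half-plane $\xs > \xQ$, and this is the essential feature that makes the interpolation argument go through with $\xQ$ playing the role of $1/2$.

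First I would establish bounds on the two vertical lines bounding the strip of interest. On $\xs = 1$, standard estimates give $|\log\xz(1+it)| \less \log\log t$, which serves as the ``easy'' endpoint. On the line $\xs = \xQ + \xd$ I would prove $|\log\xz(s)| \less \log t$. This is accomplished by applying the Borel--Carath\'eodory theorem to $\log\xz(s)$ on a disk centered at $s_0 = 2 + it$ with radius slightly greater than $2 - \xQ - \xd$: the convexity bound $|\xz(\xs+it)| \less t^A$ (valid throughout the critical strip) provides an upper bound on $\Re\log\xz(s) = \log|\xz(s)|$ on the larger disk, and $|\log\xz(s_0)| = O(1)$, so Borel--Carath\'eodory produces a bound of the form $|\log\xz(s)| \less \log t$ on a smaller concentric disk containing the target line.

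With both endpoint bounds in hand, I would apply the Hadamard three-lines theorem to $\log\xz(s)$ on the strip $\xQ + \xd \le \xs \le 1$. Letting $M(\xs,T)$ denote the supremum of $|\log\xz(\xs+it)|$ over $2 \le |t| \le T$, the convexity of $\log M(\xs,T)$ in $\xs$ together with the endpoint bounds $\log M(1,T) \le \log\log\log T + O(1)$ and $\log M(\xQ+\xd,T) \le \log\log T + O(1)$ gives
\[
\log M(\xs,T) \le \fr{1-\xs}{1-\xQ-\xd}\log\log T + O(\log\log\log T).
\]
Exponentiating, sending $\xd \downarrow 0$, and absorbing the resulting slack into the $\xe$ in the exponent then delivers the claimed uniform bound on any compact subinterval $\xQ < \xs_0 \le \xs \le 1$.

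The main obstacle will be the Borel--Carath\'eodory step, where the two radii and the constants must be tracked carefully so that the polynomial bound $|\xz(s)| \less t^A$ and the $O(1)$ value of $\log\xz(2+it)$ combine into a clean $O(\log t)$ estimate on $\xs = \xQ + \xd$. A secondary point is that in Titchmarsh's original proof under RH one must handle $\arg\xz$ separately near the critical line; here, because $\log\xz(s)$ is single-valued on all of $\xs > \xQ$, the Borel--Carath\'eodory estimate controls both real and imaginary parts simultaneously, so no separate argument for $\arg\xz$ is required.
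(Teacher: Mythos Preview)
Your strategy matches the paper's: Borel--Carath\'eodory on disks centered at $2+it$ to get $|\log\xz(s)| \less \xd^{-1}\log t$ near $\xs = \xQ+\xd$, followed by a convexity interpolation. Two technical points deserve adjustment, though. First, the paper uses Hadamard's three-\emph{circles} theorem rather than three-lines, with circles centered at $\xs_*+it$ (where $\xs_* = \log\log t$) passing through $1+\xd$, $\xs$, and $\xQ+\xd$; the three-lines theorem is not directly available on the strip $\xQ+\xd \le \xs \le 1$ because $\log\xz$ is unbounded on the full line $\xs=1$ (pole of $\xz$ at $s=1$), so a local argument is required. Second, the paper places the inner circle at $\xs = 1+\xd$ rather than $\xs = 1$, so that the bound $M_1 \le c/\xd$ comes straight from the absolutely convergent Dirichlet series for $\log\xz$; your proposed endpoint estimate $|\log\xz(1+it)| \less \log\log t$ is not an input but essentially a special case of the conclusion, so using it here would be circular. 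With these two fixes (three circles, inner radius at $1+\xd$) and the explicit choice $\xd = (\log\log t)^{-1}$, your outline becomes precisely the paper's proof.
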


For convenience we record the following two elementary bounds on $\xz(s)$ which are restatements of \cite{stein2003complex}*{Ch.\ 6 Prop.\ 2.7(i)} and \cite{stein2003complex}*{Ch.\ 7 Prop.\ 1.6}, respectively.

\begin{lemma}\label{lem:SSZeta}\leavevmode
    \begin{enumerate}[label=\normalfont{(\arabic*)}]
    \item \label{lem:SSZetaUpper} For all $\xs_0 \in [0,1]$ one has $\xz(s) \less t^{1-\xs_0+\xe}$ uniformly for $\xs\geq\xs_0$.
    \item \label{lem:SSZetaInv} For all $\xe > 0$ one has $1/\xz(s) \less t^\xe$ uniformly for $\xs\geq1$.
    \end{enumerate}
\end{lemma}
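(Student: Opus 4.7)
The plan is to treat the two parts separately, both by classical methods. Both statements are standard and the author points out they are restatements from \cite{stein2003complex}; my proposal sketches clean proofs consistent with that reference.

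For part \ref{lem:SSZetaUpper}, the strategy is to start from the Euler--Maclaurin-type representation
\<
    \label{eq:EulerMac}
    \xz(s) = \sum_{n\leq N} \frac{1}{n^{s}} + \frac{N^{1-s}}{s-1} - s \int_{N}^{\infty} \frac{\{x\}}{x^{s+1}} \, dx,
\>
which is valid for $\xs > 0$ and any integer $N\geq 1$ (obtained from $\sum_{n\leq N} n^{-s}$ by partial summation, or equivalently by analytic continuation of $\xz(s)$ from $\xs>1$). Restricting to $|t|\geq 1$ and choosing $N = \lfloor |t| \rfloor$, the three terms are bounded as follows: the finite sum is $\less N^{1-\xs}/(1-\xs)$ when $\xs<1$ and $\less \log N$ when $\xs=1$; the term $N^{1-s}/(s-1)$ is $\less N^{1-\xs}/|t|$; and the tail integral is $\less |s|N^{-\xs} \less |t|\edot N^{-\xs}$. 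Inserting $N\asymp|t|$ gives $\xz(s)\less |t|^{1-\xs}\log|t|$ uniformly for $0<\xs_0\leq\xs\leq 1$, while for $\xs>1$ the trivial bound $|\xz(s)|\leq \xz(\xs_0)$ (when $\xs_0>1$) or the same computation yields $\xz(s)\less \log|t|$. In all cases $\xz(s)\less |t|^{1-\xs_0+\xe}$ uniformly for $\xs\geq\xs_0$; the main care needed is tracking uniformity in $\xs$ across the cases $\xs<1$, $\xs=1$, and $\xs>1$.

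For part \ref{lem:SSZetaInv}, the strategy is the Mertens $3$--$4$--$1$ argument. From the nonnegativity $3+4\cos\xq+\cos 2\xq = 2(1+\cos\xq)^{2}\geq 0$ applied to the logarithmic expansion of $\xz(s)$ on $\xs>1$, one obtains
\<
    \label{eq:341}
    \xz(\xs)^{3}\,|\xz(\xs+it)|^{4}\,|\xz(\xs+2it)|\,\geq\, 1 \qquad (\xs>1).
\>
Using $\xz(\xs) \less 1/(\xs-1)$ from the simple pole at $s=1$, and $|\xz(\xs+2it)| \less |t|^{\xe}$ from part \ref{lem:SSZetaUpper}, rearrangement yields
\[
    |\xz(\xs+it)| \great (\xs-1)^{3/4}\, |t|^{-\xe/4} \qquad (\xs>1).
\]
To pass this lower bound to the line $\xs=1$ itself, I would use the derivative bound $|\xz'(\xs+it)| \less (\log|t|)^{2}$ (obtained via Cauchy's estimate applied on a disc of radius $\asymp 1/\log|t|$ around $\xs+it$, using part \ref{lem:SSZetaUpper} for the circle values) together with the mean value theorem across the short interval from $1+A/\log|t|$ to $1$. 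Optimizing the parameter $A$ gives $|\xz(1+it)|\great (\log|t|)^{-C}$ for an absolute $C$, hence $1/|\xz(1+it)| \less (\log|t|)^{C} \less |t|^{\xe}$; for $\xs$ strictly greater than $1$ the same Mertens inequality (or the simpler bound $|1/\xz(s)|\leq \xz(\xs)$ from the Dirichlet series $\sum \xm(n)/n^{s}$) makes the estimate uniform on $\xs\geq 1$.

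The main technical obstacle is the interpolation step from $\xs>1$ down to $\xs=1$ in part \ref{lem:SSZetaInv}, since the lower bound from \eqref{eq:341} degenerates as $\xs\to 1^{+}$; the derivative estimate and careful choice of $A$ is what saves the argument. Once this is in place, the bound $|1/\xz(s)|\less |t|^{\xe}$ is immediate, since any power of $\log|t|$ is absorbed into $|t|^{\xe}$.
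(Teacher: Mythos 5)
The paper itself supplies no proof of this lemma: it is stated as a restatement of Propositions 2.7(i) and 1.6 from Stein and Shakarchi \cite{stein2003complex}, cited without argument, so there is no paper proof to compare against. Your sketch is the standard route and closely tracks the cited source: Euler--Maclaurin for the growth bound, and the Mertens $3$--$4$--$1$ inequality plus a short-interval derivative estimate for the lower bound. Two technical points deserve tightening, though neither threatens the conclusion. In part \ref{lem:SSZetaUpper}, your bound for the tail integral carries an implicit factor $1/\xs$ (from $\int_N^\infty x^{-\xs-1}\,dx = N^{-\xs}/\xs$) which blows up as $\xs_0 \to 0$; to get the claimed uniformity down to $\xs_0 = 0$ one should integrate by parts once more, replacing $\{x\}$ by the mean-zero $B_1(\{x\}) = \{x\} - \tfrac{1}{2}$. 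In part \ref{lem:SSZetaInv}, Cauchy's estimate on a disc of radius $\asymp 1/\log t$ whose boundary reaches $\xs = 1 - 1/\log t$, fed with part \ref{lem:SSZetaUpper}, yields only $|\xz'(s)| \less t^{\xe}\log t$, not $(\log t)^{2}$: part \ref{lem:SSZetaUpper} with $\xs_0 = 1 - 1/\log t$ gives $\xz(s) \less t^{\xe}$, not $\xz(s) \less \log t$, on the circle. The polylogarithmic derivative bound you quote actually requires the finer classical estimate $\xz(s) \less \log t$ for $\xs \geq 1 - c/\log t$, which follows from the same Euler--Maclaurin representation with $N = \lfloor t\rfloor$ but with more careful tracking of constants, not from part \ref{lem:SSZetaUpper} as stated. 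This is cosmetic here: with the weaker $t^{\xe}$-type bounds the interpolation step still closes (one takes $\xs - 1$ a small negative power of $t$ rather than of $\log t$, and the resulting lower bound $|\xz(1+it)| \great t^{-c\xe}$ is exactly what the statement needs), so $1/\xz(s) \less t^{\xe}$ is unaffected.
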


\begin{lemma}
    \label{lem:ZetaPowerBB}
	Suppose that $\xQ < 1$ and $\xs_0 > \xQ$. For all $\xe>0$ one has
        \begin{align*}
            \xz(s) \less t^\xe \qqand 1/\xz(s) \less t^\xe,
        \end{align*}
    with both inequalities uniform for $\xs\geq\xs_0$.
\end{lemma}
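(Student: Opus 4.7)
The plan is to split the range $\sigma \geq \sigma_0$ into the strip $\sigma_0 \leq \sigma \leq 1$ and the half-plane $\sigma > 1$, and treat each piece with one of the two already-stated auxiliary lemmas. In the half-plane $\sigma > 1$, Lemma \ref{lem:SSZeta}\ref{lem:SSZetaUpper} with $\sigma_0 = 1$ gives $\zeta(s) \ll t^\varepsilon$ and Lemma \ref{lem:SSZeta}\ref{lem:SSZetaInv} gives $1/\zeta(s) \ll t^\varepsilon$, so this range is immediate and requires no new work.

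The substantive case is $\sigma_0 \leq \sigma \leq 1$, and here I would invoke Lemma \ref{lem:LittlewoodMod} directly. Since $\sigma \geq \sigma_0 > \Theta$, we have
\[
    \frac{1-\sigma}{1-\Theta} \leq \frac{1-\sigma_0}{1-\Theta} =: \delta_0 < 1,
\]
so by choosing an auxiliary $\varepsilon' > 0$ small enough that $\delta_0 + \varepsilon' \leq \delta$ for some fixed $\delta < 1$, Lemma \ref{lem:LittlewoodMod} yields
\[
    |\log\zeta(s)| \ll (\log t)^{\delta}
\]
uniformly on the strip $\sigma_0 \leq \sigma \leq 1$. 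Exponentiating, both $|\zeta(s)|$ and $|1/\zeta(s)|$ are bounded by $\exp(O((\log t)^\delta))$.

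Finally, given any $\varepsilon > 0$ stipulated in the statement of the lemma, one observes that $(\log t)^\delta = o(\varepsilon \log t)$ since $\delta < 1$, so $\exp(O((\log t)^\delta)) = O(t^\varepsilon)$. Combining this with the bounds on $\sigma > 1$ gives the desired uniform estimates on the entire half-plane $\sigma \geq \sigma_0$. I do not anticipate any genuine obstacle here, as the main content is packaged in Lemma \ref{lem:LittlewoodMod}; the only care needed is to keep the auxiliary $\varepsilon'$ used when applying Lemma \ref{lem:LittlewoodMod} distinct from the $\varepsilon$ appearing in the conclusion, and to note that the implied constants depend only on $\sigma_0$ (and thus on $\Theta$), not on $\sigma$ itself.
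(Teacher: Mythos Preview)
Your proposal is correct and follows essentially the same route as the paper: split into the strip $\sigma_0 \le \sigma \le 1$ and the half-plane $\sigma \ge 1$, invoke Lemma~\ref{lem:LittlewoodMod} on the strip to get $|\log\zeta(s)| \ll (\log t)^{\delta}$ with $\delta<1$, exponentiate, and handle $\sigma \ge 1$ via Lemma~\ref{lem:SSZeta}. The paper's proof is the same argument with only cosmetic differences in how the auxiliary exponent is named.
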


\begin{proof}
	For convenience let $\ell(\xs)=\fr{1-\xs}{1-\xQ}$. As $\ell(\xs_0)<1$, we fix $\xe > 0$ sufficiently small that $\ell(\xs_0)+\xe<1$, and by Lemma \ref{lem:LittlewoodMod} we have
		\[
			\log\xz(s) \less (\log{t})^{\ell(\xs)+\xe} \less (\log{t})^{\ell(\xs_0)+\xe}
		\] 
	uniformly for $\xs_0 \leq \xs \leq 1$. Since $\ell(\xs_0)+\xe < 1$, it follows that for any $\xe_1 > 0$ one has
		\<
            \label{eq:lzEps}
			\big|\.\log|\xz(s)|\big| \leq |\log\xz(s)| \leq \xe_1 \log{t} \qquad (\xs_0\leq\xs\leq1,\,\, t > t_0(\xs_0,\xe_1)).
		\>
	As both $\xz(s) \less t^\xe$ and $1/\xz(s) \less t^\xe$ uniformly for $\xs\geq1$ by Lemma \ref{lem:SSZeta}, the result follows at once from \eqref{eq:lzEps}.
\end{proof}

\begin{remark}
    Just as the truth of RH implies that of the Lindel\"of Hypothesis (see, e.g., \cite{titchmarsh1986theory}*{Ch.\ 14 and 15}), Lemma \ref{lem:ZetaPowerBB} shows that the ``partial Riemann Hypothesis'' that $\xQ<1$ implies a kind of ``partial Lindel\"of Hypothesis.'' 
\end{remark}

Finally we record the useful bound \cite{paris2001asymptotics}*{ineq.\ (2.1.19)}
	\<
		\label{eq:GammaBB}
		|\Gamma(s)| \less |s|^{\xs-\fr12} \expp{-\tf12 \pi |t| + \tf16|s|^{-1}} \qquad (\xs \geq 0).
    \>

\section{The reduction of \tops{$\phi(x,\xm)$ and $\phi_*(x,\xm)$}}
\label{sec:Mpp*}

We now focus on the functions $\phi(x,\xm)$ and $\phi_*(x,\xm)$ of equation \eqref{M:eq:pxmuForm}, often abbreviating them as $\phi(x)$ and $\phi_*(x)$, respectively. In this and the following section, let $x$ be sufficiently large and let $\xr$ and $\xr_*$ be the unique solutions to \sdpeqs, respectively. 

We recall from Lemma \ref{M:lem:pp*} the formulae
	\begin{align}
        \label{M:eq:phixRecall}
		\phi(x) &= \fr12 \lh[ \Psi(\xr)+\Psi(-\xr_*) + \fr{x(X+X_*)}{XX_*} - \fr12 \log\pth{4\pi^2 \PsiD{2}(\xr)\PsiD{2}(-\xr_*)} \rh], \\
        \label{M:eq:phi*xRecall}
		\phi_*(x) &= \fr12 \lh[ \Psi(\xr) - \Psi(-\xr_*) - \fr{x(X-X_*)}{XX_*} - \fr12 \log\pfrac{\PsiD{2}(\xr)}{\PsiD{2}(-\xr_*)} \rh].
    \end{align}
Our first step in analyzing these functions is to eliminate the implicit dependencies on $x$ above in favor more explicit dependencies. 
    
To examine these dependencies we briefly recall the arguments and consequences of \cite{daniels2023mobius}*{Prop.\ 7.1}, namely that for fixed $A>0$ and for sufficiently large $X>X_0(A)$ (regardless of the equations \eqref{eq:sdp} and \eqref{eq:sdp*} vis-\`a-vis $\xr = \exp(-1/X)$) one has
    \<
    \label{eq:PsiD1Int}
    \begin{aligned}
        \PsiD{1}(\xr) &= \ldint{2} 2^{-s-1}\fr{\xz(s)\xz(s+1)}{\xz(2s)}\xG(s+1)X^{s+1} \,ds \\
        & \qquad+ \ldint{2}(1-2^{-s-1})\fr{\xz(s+1)}{\xz(s)}\xG(s+1)X^{s+1} \,ds.
    \end{aligned}
    \>
Shifting the line of integration of the first integral left to $\xs = 1/2+\xe$, we pick up a residue of $\fr14X^2$ at $s=1$ and show the remaining integral on $\xs=1/2+\xe$ to be $O(X^{3/2+\xe})$ for a total contribution of $\fr14X^2 + O(X^{3/2+\xe})$. For the second integral in \eqref{eq:PsiD1Int}, we shift the contour of integration into the classical zero free region for $\xz(s)$, namely the region where
    \[
        |t|>t_0 \qqand \xs \geq 1 - \fr{c}{\log|t|}
    \]
for some universal constant $c>0$. Shifting the contour of integration into this region for $|t| \leq \exp(\rt{\logX})$ and integrating along $\xs=1$ otherwise, we show the second integral in \eqref{eq:PsiD1Int} to be $\less X^2(\logX)^{-A}$ and thereby conclude that
    \<
        \label{eq:PsiD1}
        \Psi_{(1)}(\xr) = \tf14X^2 + O(X^2(\logX)^{-A}) + O(X^{\fr32+\xe}).
    \>

Considering these arguments, we see that if it holds that $\xQ < 1$, then the second integral in \eqref{eq:PsiD1Int} is then $O(X^{1+\xQ+\xe})$, and therefore in place of \eqref{eq:PsiD1} we have the better estimate 
    \[
        \Psi_{(1)}(\xr) = \tf14X^2 + O(X^{1+\xQ+\xe}). 
    \]

Supposing now that $\PsiD{1}(\rho) = x$, we have $x=\tf14X^{2}[1+O(X^{\xQ-1+\xe})]$. Provided that $\xe$ is small enough that $\xQ-1+\xe<0$, it follows that for $X>X_0$ we have
    \[
        X = 2 \rt{x} \lh[ 1 + O(X^{\xQ-1+\xe}) \rh]^{-\fr12} = 2\rt{x}[1 + O(X^{\xQ-1+\xe})],
    \]
and moreover that
    \<
        \label{eq:Xx}
        X = 2\rt{x}\lh[1+O(x^{\fr12(\xQ-1)+\xe})\rh].
    \>

Similarly considering $\Psi_{(1)}(-\xr_*)$ we find that 
    \[
        \PsiD{1}(-\xr_*) = \tf14X_*^2[1+O(X_*^{\xQ-1+\xe})],
    \]
and assuming that $\PsiD{1}(-\xr_*)=x$ we similarly deduce that
    \<
        \label{eq:X*x}
        X_* = 2\rt{x} \lf[ 1 + O(x^{\fr12(\xQ-1)+\xe}) \rh].
    \>

As the quantity $\fr12(1-\xQ)$ appears frequently in our discussions, we let
    \<
        \xvq := \tf12(1-\xQ).
    \>
Considering \eqref{eq:Xx} and \eqref{eq:X*x}, we define $E = E(x)$, $E_* = E_*(x)$, and $\cE=\cE(x)$ via
    \<
        \label{eq:EE*}
        X = 2\rt{x}(1+E), \quad X_* = 2\rt{x}(1+E_*), \quad\text{and}\quad \cE = \max\{|E|,|E_*|\}
    \>
and summarize the above discussions in the following immediate lemma. 

\begin{lemma}
    \label{lem:EBounds}
    Suppose that $\xQ<1$, let $\xvq = \tf12(1-\xQ)$, and for sufficiently large $x$ let $\xr$ and $\xr_*$ be the solutions to \eqref{eq:sdp} and \eqref{eq:sdp*}, respectively. Defining $E(x)$, $E_*(x)$, and $\cE(x)$ via
        \[
            X = 2\rt{x}(1+E(x)), \quad X = 2\rt{x}(1+E_*(x)), \quad\text{and}\quad \cE(x) = \max\{|E|,|E_*|\},
        \] 
    one has 
        \<
            \label{M:eq:cEBB}
            \cE(x) \less x^{-\xvq+\xe}.
        \>
\end{lemma}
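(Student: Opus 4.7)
The plan is to adapt the Mellin-integral argument sketched in the paragraph preceding the statement, now taking full advantage of the hypothesis $\xQ<1$. Starting from the representation \eqref{eq:PsiD1Int} of $\PsiD{1}(\xr)$ as a sum of two line integrals on $\xs=2$, the goal is to sharpen the previously cited bound $\PsiD{1}(\xr) = \tf14 X^2 + O(X^2(\logX)^{-A})$ of \eqref{eq:PsiD1} into the more precise
\[
    \PsiD{1}(\xr) = \tf14 X^2 + O(X^{1+\xQ+\xe}),
\]
and analogously for $\PsiD{1}(-\xr_*)$, after which the saddle-point equations \eqref{eq:sdp} and \eqref{eq:sdp*} can be inverted to pin down $X$ and $X_*$ in terms of $\rt{x}$.

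For the first integral in \eqref{eq:PsiD1Int} I would shift the contour from $\xs=2$ to $\xs=\tf12+\xe$, picking up only the residue $\tf14 X^2$ at the simple pole $s=1$ coming from $\xz(s)$; the remaining integral is $O(X^{3/2+\xe})$ by standard estimates on $\xz$ near $\xs=\tf12$ combined with the $\xG$-decay \eqref{eq:GammaBB}. For the second integral the key new step is to push the contour all the way down to $\xs=\xQ+\xe$. This is permissible precisely because $\xQ<1$ makes $1/\xz(s)$ holomorphic in $\xs>\xQ$, so the integrand $(1-2^{-s-1})\xz(s+1)\xG(s+1)X^{s+1}/\xz(s)$ has no poles in the traversed strip. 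On the shifted line, Lemma \ref{lem:ZetaPowerBB} yields $1/\xz(s) \less t^\xe$, Lemma \ref{lem:SSZeta}\ref{lem:SSZetaUpper} gives $\xz(s+1) \less t^\xe$, the $\xG$-factor supplies exponential decay through \eqref{eq:GammaBB}, and $|X^{s+1}| = X^{1+\xQ+\xe}$, for a total contribution of $O(X^{1+\xQ+\xe})$. An identical computation with $-\xr_*$ in place of $\xr$ produces the corresponding expansion in $X_*$.

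To finish, I would feed the saddle-point relations $\PsiD{1}(\xr)=x$ and $\PsiD{1}(-\xr_*)=x$ into the expansions just obtained. Solving $x = \tf14 X^2[1+O(X^{\xQ-1+\xe})]$ for $X$ and bootstrapping with the crude bound $X\asymp\rt{x}$ converts the error into $O(x^{\tf12(\xQ-1)+\xe}) = O(x^{-\xvq+\xe})$; the same manipulation delivers the matching bound for $X_*$, and $\cE(x)\less x^{-\xvq+\xe}$ then follows immediately from the definitions of $E$, $E_*$, and $\cE$. The main technical point to verify is the legitimacy of the contour shift for the second integral --- specifically, that the Lemma \ref{lem:ZetaPowerBB} bound on $1/\xz(s)$ together with the $\xG$-decay suffices to kill the horizontal connecting segments as $|t|\to\infty$, uniformly across the transition at $\xs=1$ where Lemma \ref{lem:SSZeta}\ref{lem:SSZetaInv} takes over for Lemma \ref{lem:ZetaPowerBB}.
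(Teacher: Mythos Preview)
Your proposal is correct and follows essentially the same line as the paper's own argument: the paper likewise shifts the first integral in \eqref{eq:PsiD1Int} to $\xs=\tf12+\xe$ for a contribution $\tf14X^2+O(X^{3/2+\xe})$, observes that under $\xQ<1$ the second integral may be pushed to $\xs=\xQ+\xe$ to yield $O(X^{1+\xQ+\xe})$, and then inverts the saddle-point relations exactly as you describe. Your explicit flagging of the horizontal-segment issue and the uniform applicability of Lemma~\ref{lem:ZetaPowerBB} is a welcome bit of care that the paper leaves implicit.
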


\begin{remark}
    \label{rem:EE*}
    As equation \eqref{eq:PsiD1} implies that $\fr14X^2 + O(X^2(\logX)^{-A})$, using arguments similar to those deriving \eqref{eq:Xx} and \eqref{eq:X*x} we deduce that, unconditionally, we have
        \[
            \cE(x) \less (\logx)^{-A} \qquad (A>0),
        \]
    whereby $\cE(x) = o(1)$ unconditionally.
\end{remark}

Going forward we always assume that $\xQ<1$ (so $\xvq > 0$), and we now examine the terms in formulae \eqref{M:eq:phixRecall} and \eqref{M:eq:phi*xRecall}. Recalling the relations of Theorem \ref{M:thm:Relations}, we have (unconditionally) that $\PsiD{2}(\rho) = 4x^{3/2}(1+o(1))$ and $\PsiD{2}(-\rho_*) = 4x^{3/2}(1+o(1))$, whereby
	\begin{align}
		\label{M:eq:LogPsiProd}
        \tf12 \log \lf( 4\pi^2\PsiD{2}(\rho)\PsiD{2}(-\rho_*) \rh) &= \tf32\logx + \log(8\pi) + o(1), \\
        \label{M:eq:LogPsiQuot}
		\tf12 \log \lf( \PsiD{2}(\rho)/\PsiD{2}(-\rho_*) \rh) &= \tf12 \log(1+o(1)) = o(1).
    \end{align}
Next, using the definitions \eqref{eq:EE*} we have
    \[
        \fr{x(X \pm X_*)}{XX_*} = \fr{\fr14(X \pm X_*)}{1+E+E_*+EE_*} = \tf14(X \pm X_*) \big[ 1+O(\cE) \big],
    \]
and using Lemma \ref{lem:EBounds} and the equality $1/2-\xvq = \xQ/2$ we deduce that
    \<
        \label{M:eq:XX*p}
        \fr{x(X+X_*)}{XX_*} = \tf14(X+X_*) + O(x^{1/2-\xvq+\xe}) = \rt{x} + O(x^{\xQ/2+\xe}).
    \>
Since $X-X_* = O(x^{1/2}\cE)$ we similarly deduce that
    \<
        \label{M:eq:XX*m}
        \fr{x(X-X_*)}{XX_*} = \tf14(X-X_*) + O(x^{1/2}\cE^2) = \tf14(X-X_*) + O(x^{\xQ/2-\xvq+\xe}),
    \>
and remark that the term $\tf14(X-X_*)$ here is intentionally left as-is for now.

We now examine the quantities $\Psi(\rho) \pm \Psi(-\rho_*)$. Breaking the sum $\sum_{n=1}^\infty (-1)^n \xm(n)n^{-s}$ into sums over even and odd $n$, one quickly deduces that
    \<
        \label{eq:Dirmun1}
        \Dir{(-1)^n \xm(n)}{s} = - \pfrac{1+2^{-s}}{1-2^{-s}}\fr{1}{\xz(s)},  
    \>
and it follows from \cite{daniels2023mobius}*{eqs.\ 6.4 and 6.5} (which are similar to \eqref{eq:PsiD1Int} above) and the absolute convergence of $\Psi(z)$ in \eqref{eq:Phi} that
    \<
    \label{M:eq:phiInt}
    \begin{aligned}
        &\Psi(\rho) \pm \Psi(-\rho_*) \\
        & \qquad = \tf14(X \pm X_*) + \vdint{\xs} (1-2^{-s-1})\fr{\xz(s+1)}{\xz(s)}\xG(s)\pth{X^s \mp \pfr{1+2^{-s}}{1-2^{-s}}X_*^s} \,ds \\
        & \qquad \qquad + \vdint{\xs'} 2^{-s-1} \xz(s+1)\fr{\xz(s)}{\xz(2s)}\xG(s) (X^s \pm X_*^s) \,ds \qquad (\xs>\xQ,\,\,\, \tf{\xQ}{2}<\xs'<1).
    \end{aligned}
    \>
In the interest of simplifying equation \eqref{M:eq:phiInt}, we write
    \[
        X^s \mp \pfr{1+2^{-s}}{1-2^{-s}} X_*^s = \fr{X^s \mp X_*^s - 2^{-s}(X^s \pm X_*^s)}{1-2^{-s}}  
    \]
and define
    \begin{align*}
        f(s) = \fr12 \pfrac{1-2^{-s-1}}{1-2^{-s}} \fr{\xz(s+1)}{\xz(s)}\xG(s) \qqand 
        g(s) = 2^{-s-2} \fr{\xz(s+1)\xz(s)}{\xz(2s)}\xG(s),
    \end{align*}
so that equation \eqref{M:eq:phiInt} becomes
    \<
        \label{M:eq:pp*Intfg}
        \begin{aligned}
            \Psi(\rho) \pm \Psi(-\rho_*) &= \tf14(X \pm X_*) + \vdint{\xs} 2 f(s) \pth{X^s \mp X_*^s - 2^{-s}(X^s \pm X_*^s)} \,ds \\
            & \qquad + \vdint{\xs'} 2 g(s) \pth{X^s \pm X_*^s} \,ds \qquad (\xs>\xQ,\,\,\, \tfr{\xQ}{2}<\xs'< 1).
        \end{aligned}    
    \>

Now separately considering $\Psi(\rho)+\Psi(-\rho_*)$ and $\Psi(\rho)-\Psi(-\rho_*)$, we combine equations \eqref{M:eq:LogPsiProd}, \eqref{M:eq:XX*p}, and \eqref{M:eq:pp*Intfg} and recall the factor of $1/2$ in \eqref{M:eq:phixRecall} to deduce that
    \<
        \label{M:eq:phixfg}
        \begin{aligned}
            \phi(x) &= \rt{x} + \vdint{\xs} f(s) \lf( X^s-X_*^s - 2^{-s}(X^s+X_*^s) \rh) \,ds + \vdint{\xs'} g(s) (X^s+X_*^s) \,ds  \\
            & \qquad\qquad\qquad + O(x^{\xQ/2+\xe}) \qquad (\xs>\xQ,\,\,\, \tfr{\xQ}{2}<\xs'<1).
        \end{aligned}
    \>
The asymptotic behavior of $\phi(x)$ is dominated by the term $\rt{x}$ in equation \eqref{M:eq:phixfg}: Indeed, taking $\xs = \xQ + 2\xe$ and $\xs' = \xQ/2 + 2\xe$ there and bounding
    \begin{align*}
        &\vdint{\xQ+2\xe} f(s)\pth{X^s-X_*^s - 2^{-s}(X^s+X_*^s)} \,ds \less X^{\xQ+2\xe} + X_*^{\xQ+2\xe} \less x^{\xQ/2 + \xe}, \\
        &\vdint{\xQ/2+2\xe} g(s)\pth{X^s+X_*^s} \,ds \less X^{\xQ/2+2\xe} + X_*^{\xQ/2+2\xe} \less x^{\xQ/4 + \xe},
    \end{align*}
we deduce that
    \<
        \label{M:eq:phixAsymp}
        \phi(x) = \rt{x} + O(x^{\xQ/2+\xe}).
    \>
We note that \eqref{M:eq:phixAsymp} implies $\phi(x)=\rt{x}+o(\rt{x})$, which is consistent with Lemma \ref{M:lem:pp*}.

\subsection{The reduction of \tops{$\Psi(\rho)-\Psi(-\rho_*)$}.}

We now turn to $\Psi(\xr)-\Psi(-\xr_*)$. Similar to the derivation of \eqref{M:eq:phixfg}, we combine equations \eqref{M:eq:LogPsiQuot}, \eqref{M:eq:XX*m}, and \eqref{M:eq:pp*Intfg}, noting that the term $\tf14(X-X_*)$ in \eqref{M:eq:pp*Intfg} is eliminated by the matching term in \eqref{M:eq:XX*m}, and find that
    \<
        \label{M:eq:phi*xfg}
        \begin{aligned}
            \phi_*(x) 
            &= \vdint{\xs} f(s)\pth{X^s+X_*^s - 2^{-s}(X^s-X_*^s)} \,ds + \vdint{\xs'} g(s)(X^s - X_*^s) \,ds \\
            & \qquad\qquad + O(x^{\xQ/2-\xvq+\xe}) \qquad (\xs>\xQ,\,\,\, \tf{\xQ}{2}<\xs'<1).
        \end{aligned}
    \>

Unlike for $\phi(x)$ in equation \eqref{M:eq:phixfg}, it is not evident in equation \eqref{M:eq:phi*xfg} what the asymptotic main term for $\phi_*(x)$ is, so we now treat the integrals in \eqref{M:eq:phi*xfg}, recalling that
    \[
        X = 2\rt{x}(1+E(x)), \quad X_* = 2\rt{x}(1+E_*(x)), \quad\text{and}\quad \cE = \max\{|E|,|E_*|\},
    \]
and that $\cE(x) \less x^{-\xvq+\xe}$.

By the bound on $\cE(x)$ we have $|E(x)|,|E_*(x)| < \fr12$ for $x>x_0$, whereby
    \<
        L(x) := \log(1+E(x)) \qqand L_*(x) := \log(1+E_*(x))
    \> 
are well-defined for sufficiently large $x$, and for these $x$ we have $|L(x)|,|L_*(x)|<1$. Thus, for sufficiently large $x$ we have the series expansions
	\begin{align}
		\label{eq:XexpSeries}
		X^s &= 2^s x^{\nfr{s}{2}} (1+E(x))^s = 2^s x^{\nfr{s}{2}} \sum_{k=0}^\infty \fr{s^k L^k(x)}{k!}, \\
        \label{eq:X*expSeries}
        X_*^s &= 2^s x^{\nfr{s}{2}} (1+E_*(x))^s = 2^s x^{\nfr{s}{2}} \sum_{k=0}^\infty \fr{s^k L_*^k(x)}{k!},
	\end{align}
with both series converging absolutely for fixed $s$.

We first consider the integral $\vdint{\xs} f(s)X^s\,ds$ from \eqref{M:eq:phi*xfg} (we recall $f(s)$ just below), using series \eqref{eq:XexpSeries} to write
    \<
        \label{M:eq:fXInt}
        \vdint{\xs} f(s)X^{s} \,ds = \vdint{\xs} 2^s f(s) \lf[ \sum_{k=0}^\infty \fr{s^kL^k(x)}{k!} \rh] x^{\nfr{s}{2}}\,ds.
    \>
Defining for $k \geq 0$ the functions
    \<
        \label{eq:Fk}
        F_k(s) = 2^s \big[ f(s) \big]  s^k = 2^s \lf[ \fr12\pfr{1-2^{-s-1}}{1-2^{-s}} \fr{\xz(s+1)}{\xz(s)} \xG(s) \rh] s^k,
    \>
we momentarily ignore convergence issues to formally write (for $\xs>\xQ$)
    \<
        \label{M:eq:fXIntFormal}
        \vdint{\xs} f(s)X^{s} \,ds = \vdint{\xs} F_0(s)x^{s/2} \,ds + \vdint{\xs} \sum_{k=1}^\infty \fr{F_k(s)L^k(x)}{k!}x^{s/2} \,ds. 
    \>

We use two lemmata to justify equation \eqref{M:eq:fXIntFormal}.

\begin{lemma}
    \label{M:lem:FInt}
    Suppose that $\xQ < 1$ and let $\xQ < \xs_0 \leq \xs_1 < \infty$. For all $\xs\in[\xs_0,\xs_1]$, as $k\to\infty$ one has
        \<
            \label{M:eq:FInt}
            \fr{1}{k!} \int_{-\infty}^\infty \lh| F_k(\xs+it) \rh| \,dt \less_{\xs_0,\xs_1} \exp(-\tf1{11}k).
        \>
\end{lemma}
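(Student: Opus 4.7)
The plan is to bound $|F_k(\xs+it)|$ pointwise by combining factor-by-factor estimates valid on the vertical strip $\xs \in [\xs_0,\xs_1]$, and then reduce the resulting integral to a Gamma function, whose ratio with $k!$ will produce a geometric factor of order $(2/\pi)^k$, which easily beats $e^{-k/11}$ since $\log(2/\pi)\approx -0.451 < -1/11$.

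More concretely, I would first note that on the strip $\xs_0\le\xs\le\xs_1$ the non-analytic factors in $F_k(s)$ are uniformly controlled: the prefactor $2^s$ is bounded by $2^{\xs_1}$; the quotient $(1-2^{-s-1})/(1-2^{-s})$ is bounded because $\xs_0 > \xQ \ge 1/2 > 0$ forces $|1-2^{-s}|\ge 1-2^{-\xs_0}$; the factor $\xz(s+1)$ is bounded by $\xz(\xs_0+1)$ since $\Re(s+1)>1$. The remaining two ``$s$-dependent'' factors are the source of growth: by Lemma \ref{lem:ZetaPowerBB} one has $1/\xz(s)\less |s|^{\xe}$ (using continuity on any compact piece with $|t|$ bounded and the conditional bound for large $|t|$), and by \eqref{eq:GammaBB} one has $|\xG(s)|\less |s|^{\xs-1/2}\exp(-\tfrac{\pi}{2}|t|+\tfrac{1}{6}|s|^{-1})$, where the $\tfrac{1}{6}|s|^{-1}$ contributes only a bounded constant since $|s|\ge \xs_0$. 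Multiplying by $|s^k| = |s|^k$ and absorbing all constants into an implicit $\less_{\xs_0,\xs_1}$ yields the pointwise estimate
\[
    |F_k(\xs+it)| \less_{\xs_0,\xs_1} |s|^{k+\xs_1-1/2+\xe}\, e^{-\pi|t|/2}.
\]

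Next I would integrate in $t$. On the range $|t|\le 1$, the integrand is bounded by a constant depending only on $k$ and $\xs_0,\xs_1$, and its contribution is $\less 1$. For $|t|\ge 1$ one has $|s|\asymp |t|$, so the integral is dominated by $\int_1^\infty t^{k+\xs_1-1/2+\xe}e^{-\pi t/2}\,dt$, which via the substitution $u = \pi t/2$ evaluates (up to the usual lower-order corrections) to a constant multiple of $\xG(k+\xs_1+1/2+\xe)\cdot(2/\pi)^{k+\xs_1+1/2+\xe}$. Hence
\[
    \int_{-\infty}^{\infty} |F_k(\xs+it)|\,dt \less_{\xs_0,\xs_1} \xG(k+\xs_1+1/2+\xe)\, (2/\pi)^{k}.
\]

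Dividing by $k!$ and applying Stirling's formula in the form $\xG(k+c)/k! \asymp_c k^{c-1}$, I obtain
\[
    \fr{1}{k!}\int_{-\infty}^{\infty}|F_k(\xs+it)|\,dt \less_{\xs_0,\xs_1} k^{\xs_1-1/2+\xe}\,(2/\pi)^{k}.
\]
Since $\log(2/\pi) < -\tf{1}{11}$, the polynomial factor $k^{\xs_1-1/2+\xe}$ is absorbed by a slight loss in the exponential, and one concludes $\less_{\xs_0,\xs_1} \exp(-\tf{1}{11}k)$ for all sufficiently large $k$, giving the lemma. The only real subtlety is the uniform boundedness of $1/\xz(s)$ on the vertical strip (which is why we need $\xs_0 > \xQ$, so that the strip contains no zeros of $\xz$ and Lemma \ref{lem:ZetaPowerBB} applies); everything else is a routine Gamma function calculation.
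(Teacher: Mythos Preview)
Your strategy is essentially the paper's, and your substitution $u=\pi t/2$ to recognize the tail integral as a Gamma function is actually cleaner than the paper's appeal to Laplace's method. But there is a real gap in how you handle the factor $|s|^k$. When you write ``for $|t|\ge 1$ one has $|s|\asymp|t|$, so the integral is dominated by $\int_1^\infty t^{k+\xs_1-1/2+\xe}e^{-\pi t/2}\,dt$'', you are implicitly using $|s|\le C|t|$ and then discarding the resulting $C^k$. That constant is raised to the $k$-th power and cannot be hidden in $\less_{\xs_0,\xs_1}$: for $|t|\ge 1$ the best uniform bound is $|s|\le\sqrt{\xs_1^2+1}\,|t|$, and once $\xs_1$ is even moderately large (already for $\xs_1\gtrsim 1$) one has $\sqrt{\xs_1^2+1}\cdot(2/\pi)>e^{-1/11}$, so your bound gives no decay at all. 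The same issue lurks in the $|t|\le 1$ range, where the contribution is not $\less 1$ uniformly in $k$ but $\less(\xs_1^2+1)^{k/2}$; this one is harmless after dividing by $k!$, but not as you wrote it.

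The fix is to split at $|t|=T$ with $T\ge\xs_1$ rather than at $|t|=1$: then for $|t|>T$ one has $|s|\le\sqrt{2}\,|t|$, the tail contributes $\less 2^{k/2}(2/\pi)^k\,\Gamma(k+c)$, and since $\tfrac{3}{2}\log 2-\log\pi\approx-0.105<-\tfrac{1}{11}$ this suffices after division by $k!$. The paper does exactly this (taking $T>\max\{\xs_1,t_0\}$), and the need for this split is precisely where its factor $2^{k/2}$ comes from.
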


\begin{proof}
    For simplicity we omit the subscripts $\xs_0,\xs_1$ from our inequalities below.
    
    As $\xQ<1$, by Lemma \ref{lem:ZetaPowerBB} one has $|\xz(\xs+it)|^{-1} \less t^\xd$ for any $\xd>0$, uniformly for $\xs_0 \leq \xs \leq \xs_1$. That is, for some positive $t_0 = t_0(\xs_0,\xs_1)$ and $c_0 = c_0(\xs_0,\xs_1)$ one has
        \<
            \label{M:eq:ZetaIneq}
            |\xz(\xs+it)|^{-1} \leq c_0 t^{\nfr12} \qquad (\xs_0 \leq \xs \leq \xs_1,\,\, t>t_0).
        \>
    As this inequality clearly remains valid with $\max\{t_0,1\}$ in place of $t_0$, we assume without loss of generality that $t_0 \geq 1$.

    We consider the integral in \eqref{M:eq:FInt} without the factor $1/k!$. Because $|F_k(\xs+it)|=|F_k(\xs-it)|$, we have
    	\<
            \label{eq:FInt}
    		\int_{-\infty}^\infty \ABS{F_k(\xs+it)} \,dt \less \int_0^\infty \afrac{(\xs+it)^k\xG(\xs+it)}{\xz(\xs+it)} \,dt.
        \> 
    With $t_0$ as in \eqref{M:eq:ZetaIneq}, let $T > \max\{\xs_1,t_0\}$, and suppose that $k > \pi T/2$. As $\xG(s)$ and $1/\xz(s)$ are bounded on $[\xs_0,\xs_1]\times[0,T]$ one has
    	\<
            \label{eq:FIntLower}
    		\int_{0}^{T} \afrac{(\xs+it)^k\xG(\xs+it)}{\xz(\xs+it)} \,dt 
    		\less \int_0^{T} (\xs^2 + t^2)^{k/2} \,dt \less 2^{k/2} T^{k+1},
    	\>
    and it remains to consider the right side of \eqref{eq:FInt} for $t > T$. Here, by \eqref{eq:GammaBB} one has
        \[
            |(\xs+it)^k\xG(\xs+it)| \less 2^{k/2} t^{k+\xs-1/2} e^{-\pi t/2}.
        \]
    Using \eqref{M:eq:ZetaIneq} then, we have
    	\<
    		\label{eq:FIntUpper}
    		\int_T^\infty \afrac{(\xs+it)^k\xG(\xs+it)}{\xz(\xs+it)} \,dt \less 2^{k/2} \int_T^\infty e^{-\pi t/2} t^{k+\xs} \,dt = 2^{k/2} \int_T^\infty e^{-k\vphi(t)}t^{\xs} \,dt,
    	\>
    where $\vphi(t) := \pi t/(2k) - \log{t}$. As $\vphi(t)$ has its global minimum at $t_* := 2k/\pi$, which is in the range $(T,\infty)$ by assumption, and $\vphi''(t_*) = \pi^2/(4k^2)$, an application of Laplace's method for integrals (see, e.g., \cite{stein2003complex}*{App.\ A}) shows that 
    	\begin{align}
            \label{eq:FIntSP} 
    		\int_T^\infty e^{-k\vphi(t)}t^{\xs} \,dt \less e^{-k \vphi(t_*)} \fr{t_*^{\xs}}{(k\vphi''(t_*))^{1/2}} 
            & \less k^{\xs} \exp(k\log{k}-k+k\log(\tfr{2}{\pi})).
    	\end{align}
    
    Combining inequalities \eqref{eq:FIntLower}--\eqref{eq:FIntSP} and dividing by $k!$, we have
        \[
            \fr{1}{k!} \int_{-\infty}^\infty |F_k(\xs+it)| \,dt 
            \less \fr{1}{k!}\.\Big[2^{k/2}T^{k+1} + 2^{k/2}k^{\xs}\expp{k\log{k}-k+k\log(\tfr{2}{\pi})}\Big].
        \]
    Using Stirling's well known formula $k! \asymp k^{1/2} e^{k\log{k}-k}$, it follows that
        \[
            2^{k/2}T^{k+1}/k! \less k^{-3/2} \exp\!\big(\!-\!k\log{k} + k + k\log(T\rt{2})\big)\less \exp(-\tf12 k\log{k})
        \]
    and
        \[
            (2^{k/2}k^\xs/k!) \exp\!\big(k \log{k} - k + k\log(\tfr{2}{\pi})\big) \less k^{\xs} \exp((\tf32\log{2}-\log\pi)k).
        \]
    Computing that $\tf32\log2 - \log\pi \approx -0.105 < -\tf{1}{10}$, we conclude that
    	\<
            \label{M:eq:FIntFinal}
            \fr{1}{k!} \int_{-\infty}^\infty |F_k(\xs+it)| \,dt
            \less k^{\xs}e^{-\fr{1}{10}k} + e^{-\fr12 k\log{k}} \less e^{-\fr1{11}k}.
    	\>
    
    With similar arguments we easily see that $\int_{-\infty}^\infty |F_k(\xs+it)| \,dt < \infty$ for $k \leq \pi T/2$, and the result follows at once by enlarging the constant in \eqref{M:eq:FIntFinal} to account for these finitely many $k$.
\end{proof}

Recalling that $\xvq = \fr12(1-\xQ)$, we now establish the main term for $\vdint{\xs} f(x)X^s \,ds$.

\begin{lemma}
    \label{M:lem:fXIntMain}
    Suppose that $\xQ < 1$. For all $\xs > \xQ$ one has
        \<
            \vdint{\xs} f(s)X^{s} \,ds = \vdint{\xs} F_0(s)x^{s/2} \,ds + O(x^{\xQ/2-\Qdel+\xe}).
        \>
\end{lemma}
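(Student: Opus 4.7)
The plan is to substitute the series $X^s = 2^s x^{s/2}\sum_{k=0}^\infty s^k L^k(x)/k!$ into the left-hand integral, exchange sum and integral, recognize the $k=0$ term as the asserted main term, and show that the tail $k\geq 1$ is absorbed into $O(x^{\xQ/2 - \xvq + \xe})$.

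Justifying the interchange is direct. By Lemma \ref{M:lem:FInt}, $(k!)^{-1}\int_{-\infty}^\infty |F_k(\xs_0 + it)|\, dt \less e^{-k/11}$ on any vertical line with abscissa $\xs_0 \in (\xQ, \xs]$, and by Lemma \ref{lem:EBounds} we have $|L(x)| = |\log(1+E(x))| \less |E(x)| \less x^{-\xvq+\xe}$; in particular $|L(x)| < 1$ for all sufficiently large $x$. Absolute convergence of the double sum/integral follows, so Fubini yields
\[
    \vdint{\xs} f(s) X^s\, ds = \vdint{\xs} F_0(s) x^{s/2}\, ds + \sum_{k=1}^\infty \fr{L^k(x)}{k!}\vdint{\xs} F_k(s) x^{s/2}\, ds.
\]

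Next, I observe that each integral above is independent of the choice of line $\xs_0 > \xQ$: the integrands' only potential singularities in the half-plane $\Re(s) > \xQ$ would come from zeros of $\xz(s)$, of which there are none by the definition of $\xQ$, while the $\xG(s)$ factor forces exponential decay in $|t|$ and Lemma \ref{lem:ZetaPowerBB} keeps $1/\xz(s)$ at most of polynomial growth. I therefore shift each tail integral to the line $\xs_0 = \xQ + 2\xe$ and bound
\[
    \sum_{k=1}^\infty \fr{|L(x)|^k}{k!} \vdint{\xs_0}|F_k(s)|\, x^{\xs_0/2}\, |ds|
    \less x^{\xQ/2 + \xe}\sum_{k=1}^\infty \pth{x^{-\xvq+\xe} e^{-1/11}}^k
    \less x^{\xQ/2 - \xvq + 2\xe},
\]
since for large $x$ the geometric-type series is dominated by its $k=1$ term. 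Relabeling $\xe$ gives the claimed error.

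The main obstacle is only the bookkeeping above: the exponential decay $e^{-k/11}$ in Lemma \ref{M:lem:FInt} is so strong that it effortlessly absorbs the polynomial factor $s^k$ built into $F_k$ and makes the $k\geq 1$ tail no larger than its $k=1$ contribution, while the analyticity and decay needed to shift contours are immediate from the defining property of $\xQ$ and the $\xG$ factor.
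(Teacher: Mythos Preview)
Your proof is correct and follows essentially the same route as the paper: expand $X^s$ as a series in $L(x)$, justify the interchange via the exponential bound of Lemma~\ref{M:lem:FInt} together with $|L(x)|\less x^{-\xvq+\xe}$, then shift the tail integrals to the line $\xs_0=\xQ+2\xe$ and sum the resulting geometric series. The paper's presentation differs only cosmetically, first writing the split formally and then justifying it a posteriori rather than invoking Fubini up front.
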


\begin{proof}
    We recall that (formally) we write
    \<
        \label{M:eq:fXIntSplit}
        \vdint{\xs} f(s)X^{s} \,ds = \vdint{\xs} F_0(s)x^{s/2} \,ds + \vdint{\xs} \sum_{k=1}^\infty \fr{F_k(s)L^k(x)}{k!} x^{s/2} \,ds 
    \>
    and that, by Lemma \ref{lem:EBounds}, we have $L(x) \less \cE(x) \less x^{-\xvq+\xe}$. Trivially bounding
    \[
        \vdint{\xs} \sum_{k=1}^\infty \fr{F_k(s)L^k(x)}{k!} x^{s/2} \,ds 
        \less x^{\xs/2} \sum_{k=1}^\infty \fr{|L(x)|^k}{k!} \int_{-\infty}^\infty |F_k(\xs+it)| \,dt,
    \]
we apply Lemma \ref{M:lem:FInt} to deduce that
    \begin{align}
        \label{M:eq:FIntSumBB}
        & \vdint{\xs} \sum_{k=1}^\infty \fr{F_k(s)L^k(x)}{k!} x^{s/2} \,ds 
            \less_{\xs} x^{\xs/2} \sum_{k=1}^\infty  x^{-k(\Qdel-\xe)} e^{-k/11} \less_{\xs} x^{\xs/2-\Qdel+\xe}. 
    \end{align}

Using similar arguments to those of Lemma \ref{M:lem:FInt} one easily sees that 
    \[
        \vdint{\xs} F_0(s)x^{s/2} \,ds \less_\xs x^{\xs/2} \qquad (\xs > \xQ),
    \] 
whereby, in light of inequality \eqref{M:eq:FIntSumBB}, equation \eqref{M:eq:fXIntSplit} is justified. In fact, as we have shown both integrals on the right side of equation \eqref{M:eq:fXIntSplit} to be absolutely convergent for all $\xs > \xQ$, and their integrands are analytic for $\xs > \xQ$, we need not use the same $\xs$ for both integrals. Selecting $\xs = \xQ + 2\xe$ for the latter integral in equation \eqref{M:eq:fXIntSplit} then, we apply inequality \eqref{M:eq:FIntSumBB} to deduce that
    \[
        \vdint{\xs} f(s)X^{s} \,ds = \vdint{\xs} F_0(s)x^{s/2} \,ds + O(x^{\xQ/2-\Qdel+2\xe}),    
    \]
and the result follows by using $\xe/2$ in place of $\xe$.
\end{proof}

The arguments of Lemma \ref{M:lem:fXIntMain} may be similarly applied to the integrals $\vdint{\xs} f(s)X_*^s\,ds$ and $\vdint{\xs} 2^{-s}f(s)(X^s-X_*^s) \,ds$ in \eqref{M:eq:phi*xfg}, and doing so we find that
    \<
        \label{M:eq:fXIntMain*}
        \vdint{\xs} f(s) \lf( X^s+X^s-2^{-s}(X^s-X_*^s) \rh) \,ds = \vdint{\xs} 2F_0(s) x^{s/2} \,ds + O(x^{\xQ/2-\Qdel+\xe}).
    \>

We now turn to $\vdint{\xs'} g(s)(X^s-X_*^s) \,ds$ in equation \eqref{M:eq:phi*xfg} (recalling $g(s)$ just below). Although a formula like that of Lemma \ref{M:lem:fXIntMain} holds for $\vdint{\xs'}g(s)X^s \,ds$, such a formula is unnecessary since the integral $\vdint{\xs'} g(s)(X^s-X_*^s) \,ds$ is ultimately part of the error term in \eqref{M:eq:phi*xfg} as we show now. 

Defining
    \[
        G_k(s) = 2^s \big[ g(s) \big] s^k = 2^{s} \lf[ 2^{-s-2}\fr{\xz(s+1)\xz(s)}{\xz(2s)}\xG(s) \rh]s^k     
    \] 
and recalling the series \eqref{eq:XexpSeries} and \eqref{eq:X*expSeries} we find that
    \<
        \label{M:eq:GIntFull}
        \vdint{\xs'} g(s)(X^s-X_*^s) \,ds = \vdint{\xs'} \lf[ \sum_{k=1}^\infty \fr{G_k(s)}{k!} (L^k(x)-L_*^k(x)) \rh] x^{s/2} \,ds \quad (\xs' > \tf12\xQ),
    \>
noting the cancellation of the $k=0$ terms.

Let $\fr{\xQ}{2} < \xs_0 \leq \xs_1 < 1$. In a manner similar to that of Lemma \ref{M:lem:FInt}, for all $\xs \in [\xs_0,\xs_1]$, as $k\to\infty$ one has
    \<
        \label{M:eq:GInt}
        \fr{1}{k!} \int_{-\infty}^\infty |G_k(\xs+it)| \,dt \less_{\xs_0,\xs_1} \exp(-\tf{1}{11}k).
    \>
Recalling that $L(x),L_*(x) \less x^{-\Qdel+\xe}$, equations \eqref{M:eq:GIntFull} and \eqref{M:eq:GInt} show that
    \[
        \vdint{\xs'} g(s)(X^s-X_*^s) \,ds \less_{\xs'} x^{\xs'/2} \sum_{k=1}^\infty x^{-k(\Qdel-\xe)} e^{-k/11} \less_{\xs'} x^{\xs'/2-\Qdel+\xe} \qquad (\tfr{\xQ}{2}<\xs'<1),
    \]
and selecting $\xs' = \xQ/2 + 2\xe$ we find that
    \<
        \label{M:eq:gXIntBB}
        \vdint{\xQ/2+2\xe} g(s)(X^s-X_*^s) \,ds \less x^{\xQ/4-\Qdel+2\xe},
    \>
which is smaller than the error term in \eqref{M:eq:fXIntMain*}.

Now returning to equation \eqref{M:eq:phi*xfg}, we use equations \eqref{M:eq:fXIntMain*} and \eqref{M:eq:gXIntBB} to deduce that
    \<
        \label{M:eq:phi*xIntF}
        \phi_*(x) = \vdint{\xs}2F_0(s)x^{s/2}\,ds + O(x^{\xQ/2-\Qdel+\xe}) \qquad (\xs > \xQ).    
    \>
We simplify our notation by defining (see \eqref{eq:Fk})
    \<
        \label{eq:F}
        F(s) = 2F_0(s) = 2 \lf[ 2^{s-1} \pfrac{1-2^{-s-1}}{1-2^{-s}} \fr{\xz(s+1)}{\xz(s)} \xG(s) \rh]        
    \>
and
    \<
        \label{M:eq:phi0}
        \phi_0(x,\xm) = \vdint{\xs} F(s) x^{s/2}\,ds \qquad (\xs>\xQ),
    \>
and summarize the results of this section in the following proposition.

\begin{proposition}
    \label{M:prop:pp*Asymp}
    Suppose that $\xQ<1$ and let $\Qdel=\tf12(1-\xQ)$. As $x\to\infty$, one has
        \[
            \phi(x,\xm) = \rt{x} + O(x^{\xQ/2+\xe})    
        \]
    and
        \[
            \phi_*(x,\xm) = \phi_0(x,\xm) + O(x^{\xQ/2-\Qdel+\xe}),
        \]
    where
        \[
            \phi_0(x,\xm) = \vdint{\xs} F(s)x^{s/2} \,ds = \vdint{\xs} 2^s \pfrac{1-2^{-s-1}}{1-2^{-s}} \fr{\xz(s+1)}{\xz(s)} \xG(s)x^{s/2} \,ds \qquad (\xs>\xQ). 
        \]
    In addition, one has $\phi_*(x,\xm) \less x^{\xQ/2+\xe}$. 
\end{proposition}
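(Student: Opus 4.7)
The plan is to recognize that this proposition is essentially a consolidation of the reductions already carried out throughout section \ref{sec:Mpp*}, so the proof amounts to assembling the key intermediate formulae and supplying one additional contour shift.

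First, the asymptotic $\phi(x,\xm) = \rt{x} + O(x^{\xQ/2+\xe})$ is precisely the already-derived equation \eqref{M:eq:phixAsymp}: starting from the integral representation \eqref{M:eq:phixfg}, one selects the abscissae $\xs = \xQ+2\xe$ and $\xs' = \xQ/2+2\xe$, applies the trivial bounds $X^s, X_*^s \less x^{\xs/2}$, and invokes absolute convergence of the $f$- and $g$-integrals on those lines. The convergence is ultimately guaranteed by the conditional bound $1/\xz(s) \less t^\xe$ from Lemma \ref{lem:ZetaPowerBB} together with the exponential decay of $\xG(s)$ recorded in \eqref{eq:GammaBB}.

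Next, the identity $\phi_*(x,\xm) = \phi_0(x,\xm) + O(x^{\xQ/2-\Qdel+\xe})$ is just equation \eqref{M:eq:phi*xIntF} rewritten via $F(s) = 2F_0(s)$ and the definition \eqref{M:eq:phi0} of $\phi_0$. Equation \eqref{M:eq:phi*xIntF} itself was assembled by applying Lemma \ref{M:lem:fXIntMain} to each of the three $f$-type pieces in \eqref{M:eq:phi*xfg} — producing the combined estimate \eqref{M:eq:fXIntMain*} — and by absorbing the smaller $g$-integral contribution \eqref{M:eq:gXIntBB} into the error term.

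The only point requiring a fresh step is the final bound $\phi_*(x,\xm) \less x^{\xQ/2+\xe}$. In view of the second claim, it suffices to prove $\phi_0(x,\xm) \less x^{\xQ/2+\xe}$, since the accompanying error $O(x^{\xQ/2-\Qdel+\xe})$ is automatically of smaller order. For this I would shift the contour in the integral defining $\phi_0$ to the line $\xs = \xQ+2\xe$: the shift is legal because $F(s)$ is holomorphic throughout the half-plane $\xs>\xQ$ (no zeros of $\xz(s)$ lie there, by the very definition of $\xQ$), and the $k=0$ instance of the estimates used in Lemma \ref{M:lem:FInt} guarantees that $\int_{-\infty}^\infty |F(\xQ+2\xe+it)|\,dt < \infty$. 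A trivial estimate on this line then yields $\phi_0(x,\xm) \less x^{(\xQ+2\xe)/2} = x^{\xQ/2+\xe}$, and the proposition follows. I expect no substantial obstacle: every ingredient is already in hand, and the one subtlety — shifting the contour past possible zeros of $\xz(s)$ — is neutralized by the standing hypothesis $\xQ<1$, which puts the target line $\xs=\xQ+2\xe$ strictly inside the zero-free half-plane.
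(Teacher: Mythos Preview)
Your proposal is correct and matches the paper's approach: the first two claims are indeed the already-derived equations \eqref{M:eq:phixAsymp} and \eqref{M:eq:phi*xIntF}, and for the final bound the paper likewise just takes $\xs=\xQ+2\xe$ in \eqref{M:eq:phi*xIntF} (equivalently, in the definition of $\phi_0$) and estimates trivially.
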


\begin{proof}
    Only the final statement of the proposition remains to be shown at this point, and said statement follows immediately from taking $\xs = \xQ+2\xe$ in equation \eqref{M:eq:phi*xIntF}.
\end{proof}

\section{\tops{Biasymptotic results for $(p(n,\xm))_\nn$}}
\label{sec:MBiasymp}

At this point we have seen that, on assumption that $\xQ < 1$, the first three equations listed in Proposition \ref{B:prop:Biasymp} are valid for $p(x,\xm)$, $\phi(x,\xm)$, and $\phi_*(x,\xm)$ using $\xa = \xk^{-1} = \rt{6}/(2\pi)$ and $A = \xQ/2$. It thus remains to verify that the last equation therein is also valid, i.e., that $\phi_*(x,\xm) = \xO_{\pm}^*(x^{\xQ/2-\xe})$. Considering Lemma \ref{B:lem:Omega*Approx} and Proposition \ref{M:prop:pp*Asymp}, we may establish said $\xO_{\pm}^*$--result by establishing a similar result for $\phi_0(x,\xm)$. 

In this section we often abbreviate $\phi_*(x,\xm)$ and $\phi_0(x,\xm)$ to $\phi_*(x)$ and $\phi_0(x)$, respectively.

\begin{remark}
    Although the majority of our analyses thus far deal with only large positive $x$, it is evident from \eqref{M:eq:phi0} that $\phi_0(x)$ may be defined for all $x > 0$, so we assume it to be thus defined using \eqref{M:eq:phi0}.
\end{remark}

Our first step is to show that $\phi_0(x) = \xO_{\pm}(x^{\xQ/2-\xe})$. For this we recall the following analogue of a theorem of Landau. 

\begin{lemma}[{\cite{montgomery2007multiplicative}*{Thm.\ 15.1}}]
	\label{lem:Landau}
	Suppose that $A(x)$ is a bounded Riemann-integrable function in any finite interval $1 \leq x \leq X$, and that $A(x) \geq 0$ for all $x > X_0$. Let $\xs_c$ denote the infimum of those $\xs$ for which $\int_{X_0}^\infty A(x) x^{-\xs}\,dx < \infty$. The function 
		\[
			\mathscr{A}(s) = \int_1^\infty A(x) x^{-s}\, dx
		\]
	is analytic in the half-plane $\xs > \xs_c$, but not at the point $s = \xs_c$. 
\end{lemma}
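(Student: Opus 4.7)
The plan is to separate the two claims in the lemma.

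Analyticity of $\mathscr{A}(s)$ on $\{\xs > \xs_c\}$ is routine. For any $\xs_1 > \xs_c$, the assumption $A(x) \geq 0$ on $(X_0,\infty)$ makes the convergence of $\int_{X_0}^\infty A(x) x^{-\xs_1}\,dx$ automatically absolute; together with the uniform domination $|A(x) x^{-s}| \leq |A(x)| x^{-\xs_1}$ on the half-plane $\{\xs \geq \xs_1\}$, this yields absolute and locally uniform convergence of the integral defining $\mathscr{A}(s)$. Morera's theorem (or differentiation under the integral sign) then gives analyticity on $\{\xs > \xs_c\}$.

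The substance of the lemma is the failure of analyticity at $s = \xs_c$, proved by Landau's classical trick. Suppose for contradiction that $\mathscr{A}(s)$ extends analytically to an open disk centered at $\xs_c$. Pick a real $\xs_0 > \xs_c$ close enough to $\xs_c$ that the Taylor series of $\mathscr{A}(s)$ about $\xs_0$ has radius of convergence strictly exceeding $\xs_0 - \xs_c$, so that the series converges at some real point $s_1 := \xs_c - \delta$ with $\delta > 0$. Differentiation under the integral sign yields $\mathscr{A}^{(k)}(\xs_0) = (-1)^k \int_1^\infty A(x) (\log x)^k x^{-\xs_0}\,dx$, whence
\[
    \mathscr{A}(s_1) = \sum_{k=0}^\infty \frac{(\xs_0 - s_1)^k}{k!} \int_1^\infty A(x)(\log x)^k x^{-\xs_0}\,dx.
\]
Now split each inner integral at $X_1 := \max(X_0, 1)$. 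On $[1, X_1]$, boundedness of $A$ on this compact interval makes the corresponding double series absolutely convergent, so Fubini yields the finite quantity $\int_1^{X_1} A(x) x^{-s_1}\,dx$. On $[X_1,\infty)$, the integrand $A(x)(\log x)^k x^{-\xs_0}$ is pointwise non-negative (since $\xs_0 - s_1 > 0$ and $\log x \geq 0$), so Tonelli's theorem permits term-by-term interchange of sum and integral, producing $\int_{X_1}^\infty A(x) x^{-s_1}\,dx$. Since $\mathscr{A}(s_1)$ is finite by convergence of the Taylor series, this tail integral converges, contradicting $s_1 < \xs_c$ and the definition of $\xs_c$ as an infimum.

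The one delicate point is justifying the interchange of summation and integration on the tail. This is precisely where the hypothesis $A(x) \geq 0$ on $(X_0,\infty)$ is indispensable: without sign control, Tonelli's theorem does not apply, and one cannot upgrade numerical convergence of the Taylor series into absolute convergence of the underlying integral on the tail. Everything else reduces to routine bookkeeping: absolute convergence on the compact piece $[1,X_1]$ and positivity on the tail $[X_1,\infty)$.
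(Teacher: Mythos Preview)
The paper does not supply a proof of this lemma; it is quoted verbatim from Montgomery and Vaughan \cite{montgomery2007multiplicative}*{Thm.\ 15.1} and used as a black box. Your argument is correct and is precisely the classical Landau proof that appears in that reference: expand the Taylor series at a real point $\xs_0 > \xs_c$, observe that all terms are eventually nonnegative, and interchange sum and integral to force convergence of $\int A(x)x^{-s_1}\,dx$ at a real $s_1 < \xs_c$. The only remark worth adding is that your sentence ``pick $\xs_0$ close enough that the radius of convergence exceeds $\xs_0 - \xs_c$'' deserves one line of justification: since $\mathscr{A}$ is analytic on $\{\xs > \xs_c\} \cup D(\xs_c, r)$, the distance from $\xs_0 = \xs_c + \epsilon$ (with $0 < \epsilon < r$) to the complement of this domain is $\sqrt{r^2 + \epsilon^2} > \epsilon$, so the Taylor disk about $\xs_0$ does reach strictly past $\xs_c$.
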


The function $F(s)$ from \eqref{eq:F} is clearly analytic for $\sigma > \xQ$, and using arguments like in the proof of Lemma \ref{M:lem:FInt} we readily deduce that $|F(s)| \to 0$ as $|t| \to \infty$, uniformly for $\sigma$ in compact intervals $[\xs_0,\xs_1] \subset (\xQ,\infty)$; the analogous properties hold for $2F(-2s)$ on the left halfplane $\sigma < -\xQ/2$. 

Using $\cM[f(x);s]$ and $\cM^{-1}[\vphi(s);x]$ to denote the Mellin and inverse Mellin transforms of functions $f(x)$ and $\vphi(s)$, respectively, we have
	\[
		\phi_0(x) = \MelInv{F(s)}{x^{-\fr12}} = \MelInv{2 F(-2s)}{x}.
	\]
By Mellin inversion then, it follows that
	\<
		\label{M:eq:phi0Mellin}
		\Mel{\phi_0(x)}{s} = 2 F(-2s) \qquad (\sigma < -\tfr{\xQ}{2}).
	\>

From equation \eqref{M:eq:phi0} it is evident that $\phi_0(x)\less_{\xs} x^{\xs/2}$ for all $\xs>\xQ$ and all $x>0$, and from this we deduce that
    \<
        \MelZero{\phi_0(x)}{s} := \int_0^1 \phi_0(x)x^{s-1} \,dx
    \>
converges for all $\xs>0$ and is analytic on that halfplane. We now follow arguments from \cite{montgomery2007multiplicative}*{Lem.\ 15.3} to establish an $\xO$--result for $\phi_0(x)$.

\begin{lemma}
    \label{M:lem:phi0Omega}
    If one has $\xQ<1$, then $\phi_0(x,\xm) = \xO_{\pm}(x^{\xQ/2 - \xe})$.
\end{lemma}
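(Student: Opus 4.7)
The plan is to apply Lemma \ref{lem:Landau} in a proof by contradiction, modelled on the argument of \cite{montgomery2007multiplicative}*{Lem.\ 15.3}. I describe the $\xO_+$ case; the $\xO_-$ case is symmetric, obtained by repeating the argument with $A(x) := \phi_0(x) + Cx^{\xQ/2-\xe}$ in place of the choice below.

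Suppose for contradiction that there exist $C, X_0 > 0$ with $\phi_0(x) \leq C x^{\xQ/2-\xe}$ for all $x > X_0$, and set $A(x) := C x^{\xQ/2-\xe} - \phi_0(x)$, so that $A(x) \geq 0$ for $x > X_0$. Shifting the contour in \eqref{M:eq:phi0} to $\xs = \xQ + \xe'$ and using the decay of $F$ on vertical strips in $\{\xs > \xQ\}$ (cf.\ Lemma \ref{M:lem:FInt}), one obtains $|\phi_0(x)| \ll_{\xe'} x^{\xQ/2 + \xe'}$ for every $\xe' > 0$. Hence $A(x) \ll x^{\xQ/2 + \xe'}$, and the integral $\mathscr{A}(s) := \int_1^\infty A(x) x^{-s}\,dx$ has abscissa of convergence $\xs_c \leq 1 + \xQ/2$.

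Next I would derive the meromorphic continuation
\[
    \mathscr{A}(s) = \fr{C}{s - 1 - \xQ/2 + \xe} - 2F(2s-2) + \int_0^1 \phi_0(x) x^{-s}\,dx,
\]
initially valid for $\xs > 1 + \xQ/2$ by splitting $\int_1^\infty = \int_0^\infty - \int_0^1$ for the $\phi_0$-piece and substituting $1-s$ for $s$ in \eqref{M:eq:phi0Mellin}. The singularities of the right side in a strip around $\Re s = 1 + \xQ/2$ are the simple pole of the first term at $1 + \xQ/2 - \xe$, and the poles of $2F(2s-2)$ at $s_\xr := 1 + \xr/2$ for each non-trivial zero $\xr$ of $\xz(s)$; the integral $\int_0^1 \phi_0(x) x^{-s}\,dx$ is analytic on $\xs < 1 + \xQ/2 + \xe'$ by the same near-zero bound.

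The contradiction follows in two steps. First, by the definition of $\xQ$, for every $\xd > 0$ there exists a non-trivial zero $\xr$ of $\xz(s)$ with $\Re\xr > \xQ - \xd$, producing a singularity of $\mathscr{A}(s)$ at $s_\xr$ with $\Re s_\xr > 1 + \xQ/2 - \xd/2$; since Lemma \ref{lem:Landau} asserts analyticity of $\mathscr{A}$ on $\{\xs > \xs_c\}$, these singularities force $\xs_c \geq 1 + \xQ/2$, hence $\xs_c = 1 + \xQ/2$. Second, Lemma \ref{lem:Landau} also requires $s = \xs_c$ to be a singular point of $\mathscr{A}$, yet each of the three pieces of the displayed formula is analytic at $s = 1 + \xQ/2$: the pole of the first term sits strictly to the left; the integral $\int_0^1 \phi_0(x) x^{-s}\,dx$ is analytic at $s = 1 + \xQ/2$; and $F(\xQ)$ is finite because $\xz(\xQ) \neq 0$ (the Riemann zeta function has no zero on the real interval $(0,1)$). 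This contradiction finishes the proof. The main technical hurdle I anticipate is simply the bookkeeping — verifying that the complex singularities $s_\xr$ are not cancelled by either the first or the third piece (they are not, since neither has poles off the real axis), and that $\int_0^1 \phi_0(x) x^{-s}\,dx$ is analytic in a full open neighbourhood of $s = 1 + \xQ/2$, not merely on the real line.
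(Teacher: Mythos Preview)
Your proposal is correct and follows essentially the same route as the paper: a Landau--type contradiction argument applied to $\int_1^\infty (Cx^{\xQ/2-\xe}-\phi_0(x))x^{-s}\,dx$, using the meromorphic continuation via \eqref{M:eq:phi0Mellin} and the fact that the poles of $F$ coming from $1/\xz(s)$ accumulate to the line $\Re s = \xQ$ while the real point $\xQ$ itself is regular. Your two-step organization (first pin down $\xs_c$, then note that $\xs_c$ is a regular point) is just a slight rearrangement of the paper's version, which shows analyticity on a real neighbourhood of $(\xQ-2\xe,\infty)$ and then invokes Landau to push to the full half-plane before colliding with the complex poles; the only cosmetic difference is your Mellin normalization ($x^{-s}$ versus the paper's $x^{-s/2-1}$).
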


\begin{proof}
	Fix $\xe > 0$ and suppose by way of contradiction that 
		\<
			\label{eq:phi0OmegaContra}
			\phi_0(x) < x^{\xQ/2-\xe} \qquad \text{for $x > x_0$.}
		\>
	For $\xs > \xQ$ we have
		\[
			\int_1^\infty \phi_0(x)x^{-s/2-1} \,dx = \Mel{\phi_0(x)}{-s/2} - \MelZero{\phi_0(x)}{-s/2} \qquad (\xs > \xQ),
		\]
	which by equation \eqref{M:eq:phi0Mellin} implies (again for $\xs>\xQ$) that
		\begin{align}
			\label{M:eq:Omega}
			\int_1^\infty (x^{\xQ/2-\xe}-\phi_0(x)) x^{-s/2-1}\,dx 
			&= \fr{2}{s-\xQ+2\xe} - 2F(s) + \MelZero{\phi_0(x)}{-s/2}.
		\end{align}

	In addition to being analytic on the halfplane $\xs > \xQ$, the right side of equation \eqref{M:eq:Omega} is analytic at points $s = \xs + i0$ with $\xs \in (\xQ-2\xe, \infty)$ and is therefore analytic in some $\cc$-neighborhood of this interval. 

	However, if the the left side of \eqref{M:eq:Omega} is analytic on some $\cc$-neighborhood of $(\xQ-2\xe, \infty)$, then Lemma \ref{lem:Landau} implies that this integral is analytic on the entire halfplane $\xs > \xQ-2\xe$. This causes a contradiction, as the factor $1/\xz(s)$ of $F(s)$ has poles with real parts arbitrarily close to $\xQ$. The assertion \eqref{eq:phi0OmegaContra} must therefore be false, and the relation $\phi_0(x)=\xO_+(x^{\xQ/2-\xe})$ follows. That $\phi_0(x) = \xO_{-}(x^{\xQ/2-\xe})$ is proved similarly.
\end{proof}

Because $|F(s)| \to 0$ as $|t| \to \infty$, uniformly for $\xs$ in intervals $[\xs_0,\xs_1] \subset (\xQ,\infty)$, it is evident that $\phi_0(x)$ is differentiable with derivative   
    \<
        \label{M:eq:phi0Deriv}
        \fr12\,\vdint{\xs} sF(s)x^{s/2-1} \,ds \qquad (x>0,\,\,\xs>\xQ). 
    \>
Taking $\xs = \xQ + 2\xe$ we deduce that
    \<
        \label{M:eq:phi0DerivBB}
        |\phi_0'(x)| \leq c_\xe x^{\xQ/2-1+\xe} \qquad \text{for some $c_\xe>0$.}
    \>

We now establish Theorem \ref{thm:muBiasymp} (the formal version of statement \ref{it:mu}), namely: \emph{If $\xQ < 1$, then $\spnmu$ is biasymptotic with factor $\xk^{-1} = \pi^{-1}\rt{3/2}$.}

\begin{proof}[Proof of Thm.\ \ref{thm:muBiasymp}]
    It follows at once from Lemmata \ref{B:lem:DerivDecay} and \ref{M:lem:phi0Omega} and inequality \eqref{M:eq:phi0DerivBB} that $\phi_0(x) = \xO_{\pm}^*(x^{\xQ/2-\xe})$, whereby Lemma \ref{B:lem:Omega*Approx} and Proposition \ref{M:prop:pp*Asymp} imply that $\phi_*(x) = \xO_\pm^*(x^{\xQ/2-\xe})$. 
    Together with Lemma \ref{M:lem:pp*} and Proposition \ref{M:prop:pp*Asymp}, these facts show that the assumptions of Proposition \ref{B:prop:Biasymp} hold with $\xa = \xk^{-1}$ and $A = \fr12\xQ$, whereby $\spnmu$ is biasymptotic with factor $\xk^{-1}$.
\end{proof}

\section{The reduction of \tops{$\phi(x,\xl)$ and $\phi_*(x,\xl)$}}
\label{sec:Lpp*}

We now turn our investigations toward $p(x,\xl)$, $\phi(x,\xl)$, and $\phi_*(x,\xl)$. In much the same way that the analyses of $p(x,\xl)$ in section 12 of \cite{daniels2023mobius} compress the analyses of $p(x,\xm)$ in sections 4--11 there, this and the following sections reuse many of the results from sections \ref{sec:Mpp*} and \ref{sec:MBiasymp}. Going forward let $\Psi(z)$, $\phi(x)$, etc.\ refer to $\Psi(z,\xl)$, $\phi(x,\xl)$, etc.\ unless stated otherwise. In addition, we recall that 
    \[
        \fz = \xz(2) = \pi^2/6, \quad \xk = 2\rt{\xz(2)} = \pi\rt{2/3}, \quad\text{and}\quad \xvq = \tf12(1-\xQ).
    \]

We now state the analogue of Lemma \ref{M:lem:pp*} for $p(x,\xl)$. 
\begin{lemma}
    \label{L:lem:pp*}
    Fix $A>0$, let $\Psi(z)=\Psi(z,\xl)$, and for all sufficiently large $x$ let $\xr$ and $\xr_*$ be the unique solutions to equations \eqref{eq:sdp} and \eqref{eq:sdp*}, respectively. As $x\to\infty$ one has
        \<
            \label{L:eq:pnxlpp*}
            p(x,\xl) = e^{\phi(x)} \lf[ e^{\phi_*(x)} + (-1)^{-x} e^{-\phi_*(x)} + \O{x^{-1/5}e^{|\phi_*(x)|}} \rh],
        \>
    where $(-1)^{-x} = \exp(-\pi ix)$ and
        \begin{align}
            \label{L:eq:phi}
            \phi(x) &= \fr12 \lf[ \Psi(\xr)+\Psi(-\xr_*) + \frac{x(X + X_*)}{XX_*} - \fr12 \log\lf( 4\pi^2 \PsiD{2}(\xr)\PsiD{2}(-\xr_*) \rh) \rh], \\
            \label{L:eq:phi*}
            \phi_*(x) &= \fr12 \lf[ \Psi(\xr) - \Psi(-\xr_*) - \frac{x(X - X_*)}{XX_*} - \fr12 \log\pfrac{\PsiD{2}(\xr)}{\PsiD{2}(-\xr_*)} \rh].
        \end{align}
    Moreover, as $x\to\infty$ one has
        \begin{align*}
            \phi(x) &= \tf12\xk\rt{x} + O_A(\sqrt{x} (\logx)^{-A}), \\ 
            \phi_*(x) &= O_A(\sqrt{x} (\logx)^{-A}).
        \end{align*}
\end{lemma}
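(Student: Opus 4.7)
The plan is to mirror, line for line, the derivation of Lemma \ref{M:lem:pp*} in Section \ref{sec:recall}, with every invocation of Theorems \ref{M:thm:Relations} and \ref{M:thm:asymp} replaced by their Liouville analogues, Theorems \ref{L:thm:Relations} and \ref{L:thm:pnlAsymp}. No genuinely new ideas are required.

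First, starting from Theorem \ref{L:thm:pnlAsymp}, I rewrite
\[
    p(x,\xl) = e^{\psi(x)}\bigl[1+O(x^{-1/5})\bigr] + (-1)^{-x}e^{\psi_*(x)}\bigl[1+O(x^{-1/5})\bigr],
\]
where $\psi(x) := \Psi(\xr) - x\log\xr - \tf12\log(2\pi\PsiD{2}(\xr))$ and $\psi_*(x)$ is defined analogously with $-\xr_*$ in place of $\xr$. Applying the algebraic identity
\[
    e^u + (-1)^{-x}e^v = e^{(u+v)/2}\bigl(e^{(u-v)/2}+(-1)^{-x}e^{-(u-v)/2}\bigr)
\]
and bounding $e^w + e^{-w} = O(e^{|w|})$ absorbs both error factors into a single term $O(x^{-1/5}e^{|\phi_*(x)|})$, producing the template \eqref{L:eq:pnxlpp*} with $\phi(x) = \tf12(\psi(x)+\psi_*(x))$ and $\phi_*(x) = \tf12(\psi(x)-\psi_*(x))$. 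Using the conventions $X = 1/\log(1/\xr)$ and $X_* = 1/\log(1/\xr_*)$ from \eqref{eq:X}, the sum and difference $-x\log\xr \pm (-x\log\xr_*)$ collapse to $x(X+X_*)/(XX_*)$ and $-x(X-X_*)/(XX_*)$, respectively, which yields formulae \eqref{L:eq:phi} and \eqref{L:eq:phi*} exactly.

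For the final asymptotic bounds I would invoke Theorem \ref{L:thm:Relations}: each of the four quantities $\Psi(\xr)$, $\Psi(-\xr_*)$, $-x\log\xr$, $-x\log\xr_*$ equals $\tf14\xk\rt{x}[1+O((\logx)^{-A})]$, while $\tf12\log(2\pi\PsiD{2}(\xr))$ and $\tf12\log(2\pi\PsiD{2}(-\xr_*))$ contribute only $O(\logx)$ terms, which are negligible against $\rt{x}(\logx)^{-A}$. Summing the four main contributions in \eqref{L:eq:phi} gives $\phi(x) = \tf12\xk\rt{x} + O(\rt{x}(\logx)^{-A})$, while the corresponding differences in \eqref{L:eq:phi*} cancel at leading order to yield $\phi_*(x) = O(\rt{x}(\logx)^{-A})$.

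I do not anticipate any substantive obstacle, as every step is a direct transcription of the corresponding step for $\xm$. The only detail warranting attention is the new leading constant $\tf12\xk$ appearing in $\phi(x)$: this arises because Theorem \ref{L:thm:Relations} (unlike Theorem \ref{M:thm:Relations}) places $\tf14\xk\rt{x}$ at the head of both $\Psi(\pm r)$ and $-x\log r$, so that the four contributions summed in \eqref{L:eq:phi} combine to $\tf12\xk\rt{x}$ rather than to the $\rt{x}$ of the M\"obius case.
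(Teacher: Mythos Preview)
Your proposal is correct and follows exactly the paper's approach: the paper itself does not give a separate proof of Lemma~\ref{L:lem:pp*} but merely states it as ``the analogue of Lemma~\ref{M:lem:pp*} for $p(x,\xl)$,'' and your transcription of the Section~\ref{sec:pp*} derivation---replacing Theorems~\ref{M:thm:Relations} and~\ref{M:thm:asymp} by Theorems~\ref{L:thm:Relations} and~\ref{L:thm:pnlAsymp} and tracking the altered leading constant $\tf14\xk\rt{x}$---is precisely what is intended.
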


Again defining $E=E(x)$, $E_*=E_*(x)$, and $\cE=\cE(x)$ so as to satisfy
    \<
        \label{L:eq:EE*}
        X = (\xk/\fz)\rt{x}(1+E), \quad X_* = (\xk/\fz)\rt{x}(1+E_*), \quad\text{and}\quad \cE = \max\{|E|,|E_*|\},
    \>
the assumption that $\xQ < 1$ again implies the bound 
    \<
        \label{L:eq:cEBB}
        \cE(x) \less x^{-\xvq+\xe}.
    \>
Considering the overwhelming similarities in the statements of Theorems \ref{M:thm:asymp} and \ref{L:thm:pnlAsymp}, Lemmata \ref{M:lem:pp*} and \ref{L:lem:pp*}, and the relations \eqref{eq:EE*}, \eqref{M:eq:cEBB}, \eqref{L:eq:EE*}, and \eqref{L:eq:cEBB}, there would be little value in repeating the technical, notation-heavy derivations of section \ref{sec:Mpp*} for $\phi(x,\xl)$ and $\phi_*(x,\xl)$. Indeed, arguing as done for \eqref{eq:Dirmun1} and \eqref{M:eq:phiInt}, we find that
    \[
        \Dir{(-1)^n \xl(n)}{s} = -(1+2^{1-s})\fr{\xz(2s)}{\xz(s)} 
    \]
and that
    \[
    \begin{aligned}
        &\Psi(\rho) \pm \Psi(-\rho_*) \\
        & \qquad = \fr{\fz}{4}(X \pm X_*) + \vdint{\xs} (1-2^{-s-1})\xz(s+1)\fr{\xz(2s)}{\xz(s)}\xG(s) \Big[ X^s \mp (1+2^{1-s})X_*^s \Big] \,ds \\
        & \qquad \qquad + \vdint{\xs'} 2^{-s-1} \xz(s+1)\xz(s) \big[ X^s \pm X_*^s \big] \,ds \qquad (\xs > \xQ,\,\,\, 0 < \xs' < 1).
    \end{aligned}
    \]

By arguments nearly identical to those of section \ref{sec:Mpp*} we have
    \[
        \phi(x) = \tf12\xk\rt{x} + O(x^{\xQ/2+\xe}),   
    \]
and we then isolate the main term of $\phi_*(x)$ to find that
    \[
        \phi_*(x) = \vdint{\xs} (\xk/\fz)^s(1-2^{-s-1})\xz(s+1)\fr{\xz(2s)}{\xz(2)}\xG(s) x^{s/2} \,ds + O(x^{\xQ/2-\Qdel+\xe}) \qquad (\xs > \xQ).    
    \]
Defining 
    \[
        \xh(s) = (\xk/\fz)^s (1-2^{-s-1})\xz(s+1), \qquad H(s) = \xh(s)\fr{\xz(2s)}{\xz(s)}\xG(s),
    \] 
and 
    \[
        \phi_0(x,\xl) = \vdint{\xs} H(s)x^{s/2} \,ds \qquad (\xs>\xQ),
    \]
we have the following analogue of Proposition \ref{M:prop:pp*Asymp}.

\begin{proposition}
    \label{L:prop:pp*Asymp}
    Suppose that $\xQ<1$ and let 
        \[
            \fz=\pi^2/6, \quad \xk = \pi\rt{2/3}, \quad\text{and}\quad \Qdel=\tf12(1-\xQ).
        \]
    As $x\to\infty$ one has
        \<
            \phi(x,\xl) = \tf12 \xk\rt{x} + O(x^{\xQ/2+\xe})
        \>
    and
        \<
            \label{L:eq:phi*AsympRecall}
            \phi_*(x,\xl) = \phi_0(x,\xl) + O(x^{\xQ/2-\Qdel+\xe}),
        \>
    where
        \<
            \label{L:eq:phi0Recall}
            \phi_0(x,\xl) = \vdint{\xs} H(s)x^{s/2} \,ds = \vdint{\xs} \eta(s)\fr{\xz(2s)}{\xz(s)} \Gamma(s)x^{s/2} \,ds \qquad (\xs>\xQ) 
        \>
    and $\eta(s) = (\xk/\fz)^s(1-2^{-s-1})\xz(s+1)$. In addition, one has $\phi_*(x,\xl) \less x^{\xQ/2+\xe}$.
\end{proposition}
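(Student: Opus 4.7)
The plan is to mirror the derivation of Proposition \ref{M:prop:pp*Asymp} in section \ref{sec:Mpp*}, adjusting each step for the Dirichlet series attached to $\xl$. The central algebraic observation is that the identity $\xk^2=4\fz$ produces here the same pattern of cancellations as did $X\approx 2\rt{x}$ in the $\xm$-case. First, I establish the $\xl$-analogue of Lemma \ref{lem:EBounds}. A Mellin--Barnes representation of $\PsiD{1}(\pm r)$ for $\Psi=\Psi(z,\xl)$ derived as in \eqref{eq:PsiD1Int}---but with the $\xl$-Dirichlet quotients $\xz(s-1)\xz(s)$ and $(1-2^{-s})\xz(s)\xz(2s-2)/\xz(s-1)$---together with Theorem \ref{L:thm:Relations}, permits shifting the relevant contour past the pole at $s=2$ (yielding residue $(\fz/4)X^2$) and applying Lemma \ref{lem:ZetaPowerBB} to show the remaining integral is $O(X^{\xQ+1+\xe})$ under $\xQ<1$. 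Solving the saddle-point equations \eqref{eq:sdp}, \eqref{eq:sdp*} then gives
\[
    X=(\xk/\fz)\rt{x}(1+E), \quad X_*=(\xk/\fz)\rt{x}(1+E_*), \quad \cE(x):=\max\{|E|,|E_*|\}\less x^{-\Qdel+\xe}.
\]

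Second, the integral representation of $\Psi(\xr)\pm\Psi(-\xr_*)$ stated in the excerpt follows from splitting $\Dir{\xl}{s}=\xz(2s)/\xz(s)$ and $\Dir{(-1)^n\xl(n)}{s}=-(1+2^{1-s})\xz(2s)/\xz(s)$ over even and odd $n$, applying Mellin inversion to $\Psi(\xr)$ and $\Psi(-\xr_*)$ separately, and extracting the residue $\fz(X\pm X_*)/4$ from the shift past $s=1$ in the integral whose kernel is $2^{-s-1}\xz(s+1)\xz(s)\xG(s)$. To compute $\phi(x,\xl)$ I combine Theorem \ref{L:thm:Relations} (giving $\tf12\log(4\pi^2\PsiD{2}(\xr)\PsiD{2}(-\xr_*))=\tf32\logx+O(1)$) with
\[
    \fr{x(X+X_*)}{XX_*}=\fr{x}{X}+\fr{x}{X_*}=\fr{2\fz}{\xk}\rt{x}+O(x^{1/2-\Qdel+\xe})
\]
and $(\fz/4)(X+X_*)=(\xk/2)\rt{x}+O(x^{1/2-\Qdel+\xe})$. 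Because $\xk^2=4\fz$ both leading coefficients equal $\xk/2$, and after halving per \eqref{L:eq:phi} and bounding the remaining integrals at $\xs=\xQ+2\xe$ via Lemma \ref{lem:ZetaPowerBB}, one obtains $\phi(x,\xl)=\tf12\xk\rt{x}+O(x^{\xQ/2+\xe})$.

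Third, for $\phi_*(x,\xl)$ parallel computations give $\fr{x(X-X_*)}{XX_*}=(\fz/\xk)\rt{x}(E-E_*)[1+O(\cE)]$ and $(\fz/4)(X-X_*)=(\xk/4)\rt{x}(E-E_*)$; since $\xk/4=\fz/\xk$ by the identity $\xk^2=4\fz$, these two quantities differ by $O(\rt{x}\cE^2)=O(x^{\xQ/2-\Qdel+\xe})$, producing the crucial cancellation within \eqref{L:eq:phi*}. What remains is a pair of Mellin--Barnes integrals, which I treat using the expansions
\[
    X^s=(\xk/\fz)^sx^{s/2}\sum_{k=0}^{\infty}\fr{s^kL^k(x)}{k!}, \qquad L(x):=\log(1+E(x)),
\]
and similarly for $X_*^s$. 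After algebraic regrouping analogous to the passage from \eqref{M:eq:pp*Intfg} to \eqref{M:eq:fXIntMain*}, the $k=0$ contributions assemble to $\phi_0(x,\xl)$ as defined in \eqref{L:eq:phi0Recall}, while the $k\geq 1$ terms and the entire secondary integral (on a contour $\xs'$ with $0<\xs'<1$) are pushed into the error.

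The main obstacle is the $\xl$-analogue of Lemma \ref{M:lem:FInt}: the uniform bound
\[
    \fr{1}{k!}\int_{-\infty}^{\infty}|H_k(\xs+it)|\,dt\less \exp(-k/11) \qquad (\xQ<\xs_0\leq\xs\leq\xs_1<\infty),
\]
where $H_k(s):=\xh(s)[\xz(2s)/\xz(s)]\xG(s)s^k$ (with an analogous kernel $G_k(s)$ for the secondary integral). This follows along the exact lines of the proof of Lemma \ref{M:lem:FInt}: Lemma \ref{lem:ZetaPowerBB} furnishes the bounds $|\xz(2s)|\less t^\xe$ and $|1/\xz(s)|\less t^\xe$ uniformly on compact subintervals of $(\xQ,\infty)$ (valid because $\xQ<1$ and $\xs>\xQ\geq\tf12$), $\xh(s)$ is uniformly bounded on such intervals, and the $\Gamma$-decay \eqref{eq:GammaBB} combined with the Laplace-method calculation of Lemma \ref{M:lem:FInt} produces the exponential decay in $k$. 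Summing the $k\geq 1$ tail using $|L(x)|\less x^{-\Qdel+\xe}$ and choosing $\xs=\xQ+2\xe$ (resp.\ $\xs'=2\xe$ for the secondary integral) yields the error term $O(x^{\xQ/2-\Qdel+\xe})$ of \eqref{L:eq:phi*AsympRecall}, and the final bound $\phi_*(x,\xl)\less x^{\xQ/2+\xe}$ follows by taking $\xs=\xQ+2\xe$ in the integral defining $\phi_0(x,\xl)$.
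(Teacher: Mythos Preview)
Your proposal is correct and follows essentially the same route as the paper: the paper explicitly states that ``by arguments nearly identical to those of section \ref{sec:Mpp*}'' one obtains the formulae for $\phi(x,\xl)$ and $\phi_*(x,\xl)$, and your write-up supplies exactly those adaptations---the $\xl$-analogue of Lemma \ref{lem:EBounds} with $X=(\xk/\fz)\rt{x}(1+E)$, the cancellation $(\fz/4)(X-X_*)=x(X-X_*)/(XX_*)+O(x^{\xQ/2-\Qdel+\xe})$ via $\xk^2=4\fz$, the expansion of $X^s$ in powers of $L(x)$, and the $\xl$-version of Lemma \ref{M:lem:FInt} using Lemma \ref{lem:ZetaPowerBB} for both $\xz(2s)$ and $1/\xz(s)$ on $\xs>\xQ\geq\tf12$. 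The only cosmetic difference is that you shift the $\PsiD{1}$ kernels by $s\mapsto s-1$ before naming them, which is harmless.
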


\section{\tops{Biasymptotic results for $(p(n,\xl))_\nn$}}
\label{sec:LBiasymp}

We now establish Theorem \ref{thm:nonRHBiasymp} (the formal version of statement \ref{it:xl}), namely: \emph{If $\xQ < 1$ but $\xQ>\tf12$, i.e., RH does not hold, then $\spnxl$ is biasymptotic with factor $\tf12$.}

As usual, here we often abbreviate $\phi_*(x,\xl)$ and $\phi_0(x,\xl)$ to $\phi_*(x)$ and $\phi_0(x)$, respectively. Repeating the Mellin-based arguments used for equation \eqref{M:eq:phi0Mellin} we find that
    \<
        \label{L:eq:phi0Mellin}
        \Mel{\phi_0(x)}{s} = 2 H(-2s) \qquad (\sigma < -\tfr{\xQ}{2}),
    \>
that $\MelZero{\phi_0(x)}{s} = \int_0^1 \phi_0(x)x^{-s-1} \,dx$ is analytic for $\xs>0$, and that
    \<
        \label{L:eq:Omega}
        \int_1^\infty \pth{ x^{\xQ/2-\xe} - \phi_0(x) } x^{-s/2-1} \,dx = \fr{2}{s-\xQ+2\xe} - 2H(s) + \mathcal{M}^0\lh[\phi_0(x);-s/2\rh]
    \>
for $\xs > \xQ$.

Despite the similarities of the above statements with those in section \ref{sec:MBiasymp}, the proof of Lemma \ref{M:lem:phi0Omega} cannot be used to deduce $\xO$--results for $\phi_0(x)$ if one has $\xQ=1/2$ since the factor $\xz(2s)$ of $H(s)$ has a simple pole at $s=1/2$. In particular, the right side of equation \eqref{L:eq:Omega} is not analytic on any (real) interval $(\xQ-2\xe,\infty)$ if $\xQ=1/2$. 

While partial results may be established when $\xQ=1/2$ using the methods of Lemma \ref{M:lem:phi0Omega}, for now we avoid the issue by making the stronger assumption that $1/2 < \xQ < 1$, allowing the proof of Lemma \ref{M:lem:phi0Omega} to be reused completely. The case of $\xQ=1/2$ is specially treated in section \ref{sec:Riemann}.

\begin{lemma}
    \label{L:lem:phi0Omega}
    If one has $\fr12 < \xQ < 1$, then $\phi_0(x,\xl) = \xO_{\pm}(x^{\xQ/2-\xe})$.
\end{lemma}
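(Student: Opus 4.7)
The plan is to prove this lemma by following the same Landau-style contradiction argument used in Lemma \ref{M:lem:phi0Omega}, observing that the stronger hypothesis $\xQ > 1/2$ is precisely what is needed to push the pole of $\xz(2s)$ (the new ingredient in $H(s)$ compared to $F(s)$) safely to the left of the real interval we work on.

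First, I would fix $\xe>0$ small enough that $\xQ - 2\xe > 1/2$, which is possible exactly because $\xQ > 1/2$, and assume for contradiction that $\phi_0(x,\xl) < x^{\xQ/2-\xe}$ for all $x > x_0$. Using the Mellin identity \eqref{L:eq:phi0Mellin} for $\xs<-\xQ/2$ together with the analyticity of $\MelZero{\phi_0(x)}{s}$ on $\xs>0$, the identity \eqref{L:eq:Omega} expresses
\[
    \int_1^\infty \lh(x^{\xQ/2-\xe} - \phi_0(x,\xl)\rh) x^{-s/2-1} \,dx = \fr{2}{s-\xQ+2\xe} - 2H(s) + \MelZero{\phi_0(x,\xl)}{-s/2}
\]
for $\xs>\xQ$. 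By assumption the integrand on the left is nonnegative for $x>x_0$, so Lemma \ref{lem:Landau} applies.

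The next step is to check that the right-hand side is analytic at each real point $s = \xs + i0$ with $\xs \in (\xQ - 2\xe,\infty)$. The simple pole $\fr{2}{s-\xQ+2\xe}$ occurs at the left endpoint and is not an issue. For $2H(s) = 2\xh(s)\xz(2s)\xG(s)/\xz(s)$: the factor $\xh(s)$ is entire except for the simple pole of $\xz(s+1)$ at $s=0 < 1/2 < \xQ-2\xe$; the factor $\xG(s)$ is analytic since $\xQ-2\xe > 0$; the factor $1/\xz(s)$ has no real singularities in $(1/2,\infty)$ because $\xz$ has no real zeros there; and crucially $\xz(2s)$ is analytic on $(\xQ-2\xe,\infty)$ because our choice of $\xe$ places its unique pole $s=1/2$ to the left of this interval. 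Thus the right-hand side is analytic on a complex neighborhood of the real ray $(\xQ-2\xe,\infty)$.

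Invoking Lemma \ref{lem:Landau}, the left-hand integral must then be analytic on the full halfplane $\xs > \xQ-2\xe$, and hence so is the right-hand side. This is a contradiction: by definition of $\xQ$ there exist zeros $\xr$ of $\xz(s)$ with $\Re(\xr)$ arbitrarily close to $\xQ$, so $1/\xz(s)$ — and therefore $H(s)$ — carries poles with real parts arbitrarily close to $\xQ$, all of which lie inside the halfplane $\xs > \xQ-2\xe$. This refutes the assumption, proving $\phi_0(x,\xl) = \xO_+(x^{\xQ/2-\xe})$; the $\xO_-$ half is obtained by the symmetric argument (supposing $\phi_0(x,\xl) > -x^{\xQ/2-\xe}$ eventually and flipping signs in the integrand).

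The only non-routine point is the interval condition on $\xe$, and this is exactly where the hypothesis $\xQ>1/2$ is used — without it the pole of $\xz(2s)$ at $s=1/2$ would sit on (or to the right of) the interval $(\xQ-2\xe,\infty)$ and destroy the analyticity needed to apply Landau's lemma. This limitation is what motivates the separate, more delicate treatment of the case $\xQ=1/2$ announced for section \ref{sec:Riemann}.
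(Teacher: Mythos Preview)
Your argument is correct and is exactly the approach the paper takes: the paper's proof simply states that it is identical to that of Lemma \ref{M:lem:phi0Omega} with $F(s)$ replaced by $H(s)$, having already observed (just before the lemma) that the hypothesis $\xQ>\tf12$ is what keeps the pole of $\xz(2s)$ at $s=\tf12$ outside the interval $(\xQ-2\xe,\infty)$. Your write-up spells out the analyticity check for each factor of $H(s)$ explicitly, which is a faithful unpacking of that one-line proof.
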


\begin{proof} 
    The proof is identical to that of Lemma \ref{M:lem:phi0Omega} with $F(s)$ replaced by $H(s)$.
\end{proof}

Maintaining the assumption that $\xQ < 1$ we find, in exactly the same manner as \eqref{M:eq:phi0Deriv} and \eqref{M:eq:phi0DerivBB}, that $\phi_0(x)$ is differentiable with  
    \<
        \label{L:eq:phi0Deriv}
        \phi_0'(x) = \fr12\,\vdint{\xs} sH(s)x^{s/2-1} \,ds \qquad (x>0,\,\, \xs > \xQ), 
    \>
and that for all $\xe > 0$ it holds that
    \<
        \label{L:eq:phi0DerivBB}
        |\phi_0'(x)| \leq c_\xe x^{\xQ/2-1+\xe} \qquad \text{for some $c_\xe > 0$ and all $x>0$.}
    \>

\begin{remark}
    Equation \eqref{L:eq:phi0Deriv} and inequality \eqref{L:eq:phi0DerivBB} do not require that $\xQ > \fr12$, and are thus used in later arguments when $\xQ=\fr12$ is assumed. 
\end{remark}

\begin{proof}[Proof of Thm.\ \ref{thm:nonRHBiasymp}]
    By Lemmata \ref{B:lem:DerivDecay} and \ref{L:lem:phi0Omega} and inequality \eqref{L:eq:phi0DerivBB} we have $\phi_0(x,\xl) = \xO_{\pm}^*(x^{\xQ/2-\xe})$, whereby Lemma \ref{B:lem:Omega*Approx} and Proposition \ref{L:prop:pp*Asymp} imply that $\phi_*(x,\xl) = \xO_\pm^*(x^{\xQ/2-\xe})$. Together with Lemma \ref{L:lem:pp*} and Proposition \ref{L:prop:pp*Asymp}, these facts show that the assumptions of Proposition \ref{B:prop:Biasymp} hold with $\xa = \fr12$ and $A = \tf12\xQ$, and the theorem follows.
\end{proof}

\section{The Riemann Hypothesis and \tops{$\spnxl$}}
\label{sec:Riemann}

As mentioned in statement \ref{it:asm} and section \ref{sec:Biasymp}, the Riemann Hypothesis (RH), i.e., the assertion that $\xQ = 1/2$, presents a delicate scenario vis-\`a-vis the (bi-)asymptotic type of $\spnxl$. In contrast to our results on $\spnmu$, on RH it is possible that
    \<
        \label{L:eq:asm}
        p(n,\xl) = (-1)^n \expp{ \tf12 \xk\rt{n}+o(\rt{n}) } \qquad (\xk = \pi\rt{2/3},\,\, n \to \infty),
    \>
that is, the sequence $((-1)^n p(n,\xl))_\nn$ is asymptotic with factor $1/2$. This behavior is not guaranteed by RH alone though; rather, several conditions must be satisfied. 

One such condition is the Simplicity Hypothesis (SH), i.e., the hypothesis that all zeros of $\xz(s)$ are simple. Should SH hold, the residues of $1/\xz(s)$ at the zeros of $\xz(s)$ on the \emph{critical line} $\xs=1/2$ must also be considered. In particular, one requires an assumption of effective simplicity (ES), namely that for some $C > 0$ one has
    \<
        \label{eq:ZetaDerivBB}
        \lh| 1 / \xz'(\rho)\rh| \leq C |\rho| \tag{ES}
    \> 
for all zeros $\rho = \nfr12 + i\xg$ of $\xz(s)$. In addition we discuss a condition of linear independence of the imaginary parts $\xg$ of these zeros in section \ref{sec:RiemannNS}.

Considering the result of Theorem \ref{thm:nonRHBiasymp}, going forward we assume RH unless stated otherwise, and we write the zeros $\rho$ of $\xz(s)$ on the critical line as $\rho = \nfr12 + i\xg$; the trivial zeros $-2,-4,-6,\ldots$ of $\xz(s)$ are not referenced. The quantities $\xg$ are the \emph{ordinates} of the zeros $\rho$, and $\rho$ and $\xg$ always denote such zeros and ordinates, respectively. In particular, for sums $\sum_\xr$ one lets $\rho$ run over the zeros $\rho = \nfr12 + i\xg$, and similarly for sums $\sum_\xg$ over the ordinates $\xg$. We note that although $\rho$ is used in previous sections to denote a solution of \sdpeqs, this does not cause confusion.

We now present informal arguments to motivate our later discussions, assuming SH for the remainder of this section. As $\tf{\xQ}{2} = \tf14$ and $\Qdel = \tf12(1-\Theta) = \tf14$ on RH, Proposition \ref{L:prop:pp*Asymp} implies that
    \<
        \label{R:eq:phi*Reduc}
        \phi_*(x) = \phi_0(x) + O(x^{\xe})
    \> 
with
    \<
        \label{R:eq:phi0Reduc2}
        \phi_0(x) = \vdint{\xs} H(s)x^{s/2} \,ds = \vdint{\xs} \eta(s) \fr{\xz(2s)}{\xz(s)} \Gamma(s) x^{s/2} \,ds \qquad (\xs > \tf12),
    \> 
where we recall that
    \[
        \fz = \pi^2/6, \quad \xk=\pi\rt{2/3}, \quad\text{and}\quad \eta(s) = (\xk/\fz)^s(1-2^{-s-1})\xz(s+1).
    \]
    
For all zeros $\rho=\nfr12+i\xg$ of $\xz(s)$ we define $\fc_\rho$ via
    \[
        \fc_\rho = \Res_{s=\rho}H(s) = \eta(\rho) \fr{\xz(2\rho)}{\xz'(\rho)} \Gamma(\rho),
    \]
where the last equality is valid by SH. Observing that the simple pole of $H(s)$ at $s=\tf12$ is due to $\xz(2s)$ rather than $1/\xz(s)$, we similarly define $\fc_0$ via
    \<
        \label{eq:fc0}
        \fc_0 = -\Res_{s=\fr12} H(s) = -\tf12 \cdot \fr{\eta(\fr12)}{\xz(\fr12)} \Gamma(\tf12),    
    \>
noting the negative sign, and numerically compute that $\fc_0 \approx 1.28$. 

Formally shifting left \eqref{R:eq:phi0Reduc2} left to the line $\Re(s)=u$ for some $\tf14 < u < \tf12$, one has
    \<
        \label{R:eq:phi0Heuri}
        \phi_0(x,\xl) = -\fc_0 x^{1/4} + \sum_{\xr} \fc_\rho x^{\rho/2} + \vdint{u} \eta(s) \fr{\xz(2s)}{\xz(s)} \Gamma(\rho) x^{s/2} \,ds.
    \>
As $x^{\rho/2} = x^{1/4} e^{i\xg\log(x)/2}$, the sum $\sum_{\rho} \fc_\rho x^{\rho/2}$ is a sum of oscillatory (in $x$) terms, while the term $-\fc_0 x^{1/4}$ is strictly decreasing in $x$. Considering the first zero $\rho_1 \approx \nfr12 + 14.13i$ on the critical line, we find that $|\fc_{\rho_1}| \approx 1.27 \edot 10^{-9}$, which is far smaller than $\fc_0 \approx 1.28$; this vast difference is due to the exponential decay of $|\Gamma(s)|$ on vertical lines. It is conceivable then that the series $\sum_{\rho} |\fc_\rho|$ may converge, and may even satisfy the inequality $\sum_{\rho} |\fc_\rho| < \fc_0$. Should this inequality hold, it follows that
    \<
        \label{R:eq:phi0LessApprox}
        -\phi_0(x) \great x^{1/4}
    \>
and, from formulae \eqref{L:eq:pxxlForm} and \eqref{L:eq:pnxlpp*}, that 
    \[
        p(n,\xl) = (-1)^n \fr{\rho_*^{-n}\Phi(-\rho_*)}{\rt{2\pi \PsiD{2}(-\rho_*)}}\lh[1+O(n^{-1/5})\rh] = (-1)^n \expp{\tf12\xk\rt{n} + o(\rt{n})},
    \]
as mentioned in \eqref{L:eq:asm}. Thus, on assumptions of RH and SH we require detailed information about the quantities $\fc_\rho$.

\section{The cases of nonsimplicity and large residues}
\label{sec:RiemannNS}

Prior to a rigorous treatment of equation \eqref{R:eq:phi0Heuri}, we consider the scenario in which RH holds but SH fails. If a zero $\rho=\nfr12+i\xg$ has multiplicity $m>1$, then 
    \[
        \Res_{s=\rho} \lh[H(s)x^{s/2}\rh] = \sum_{k=1}^{m} \fc_{\rho,k} \fr{\log^{k-1}(x)}{(k-1)!} x^{\rho/2}
    \]
for some complex constants $\fc_{\rho,1},\ldots,\fc_{\rho,m}$ with $\fc_{\rho,m}\neq0$. In this case, the oscillatory sum in \eqref{R:eq:phi0Heuri} has terms of order larger than $x^{1/4}$, making the quantity $-\fc_0 x^{1/4}$ in equation \eqref{R:eq:phi0Heuri} negligible as $x \to \infty$. This is made precise in the following lemma.

\begin{lemma}
    \label{R:lem:NS}
    Assume RH and suppose that some nontrivial zero $\rho$ of $\xz(s)$ has multiplicity $m>1$. As $x\to\infty$ one has
        \[
            \phi_0(x,\xl) = \xO_{\pm}(x^{1/4}\log^{m-1} x).
        \]
\end{lemma}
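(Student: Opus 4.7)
The plan is to establish the lemma via a contour-shift argument on the Mellin representation
\[
    \phi_0(x,\xl) = \vdint{\xs} H(s)\, x^{s/2}\,ds \qquad (\xs > \tf12)
\]
from \eqref{L:eq:phi0Recall}, where $H(s) = \xh(s)\xz(2s)\Gamma(s)/\xz(s)$. Under RH, the poles of $H(s)$ on or to the right of the critical line are a simple pole at $s=\tf12$ (from $\xz(2s)$, noting $\xz(\tf12)\ne 0$) and poles at each nontrivial zero $\rho_k = \tf12+i\xg_k$ of $\xz(s)$ whose order matches that zero's multiplicity. Since $\xz(2\rho)=\xz(1+2i\xg)\ne 0$ by the classical nonvanishing of $\xz$ on $\Re s=1$, and $\xh(\rho),\Gamma(\rho)$ are finite and nonzero, $H(s)$ has a pole of order exactly $m$ at $s=\rho$. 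I expect the combined residues at $\rho$ and $\bar\rho$ to produce the stated oscillation of order $x^{1/4}(\log x)^{m-1}$.

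Concretely, I would truncate at height $|t|\le T$, apply Cauchy's theorem to move the contour from $\xs=\tf12+\xe$ to $\xs=\tf12-\xd$ for some small $\xd>0$, and then let $T\to\infty$. This yields
\[
    \phi_0(x,\xl) = -\fc_0\, x^{1/4} + \sum_{\xg_k}\Res_{s=\rho_k}\!\big[H(s)\, x^{s/2}\big] + J_\xd(x),
\]
where $J_\xd(x)$ is the integral along $\xs=\tf12-\xd$. Writing $H(s) = \sum_{k=1}^{m}a_k(s-\rho)^{-k} + O(1)$ near $\rho$ with $a_m\ne 0$, together with $x^{s/2} = x^{\rho/2}\sum_{j\ge 0}(s-\rho)^j(\log x)^j/(2^j j!)$, one finds
\[
    \Res_{s=\rho}\!\big[H(s)\, x^{s/2}\big] = x^{\rho/2}\sum_{k=1}^{m}\frac{a_k(\log x)^{k-1}}{2^{k-1}(k-1)!},
\]
whose dominant term is $a_m\, x^{\rho/2}(\log x)^{m-1}/(2^{m-1}(m-1)!)$. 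By the functional equation $\bar\rho$ is also a zero of order $m$ and $H(\bar s)=\overline{H(s)}$, so combining the residues at $\rho$ and $\bar\rho$ produces a real main term
\[
    C\, x^{1/4}(\log x)^{m-1}\cos\!\big(\tf12\xg\log x + \xq_0\big) + O\!\big(x^{1/4}(\log x)^{m-2}\big)
\]
with $C := |a_m|/(2^{m-2}(m-1)!) > 0$ and $\xq_0 := \arg(a_m)\in\rr$.

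It remains to show the other contributions are strictly smaller than $x^{1/4}(\log x)^{m-1}$. Using the functional equation together with Lemma \ref{lem:LittlewoodMod} and the exponential decay $|\Gamma(\xs+it)|\ll|t|^{\xs-1/2}e^{-\pi|t|/2}$ from \eqref{eq:GammaBB}, one obtains $|H(\xs+it)|\ll|t|^{O(\xe)}e^{-\pi|t|/2}$ on $\xs = \tf12-\xd$, whence $J_\xd(x) = O(x^{1/4-\xd/2})$ and the horizontal segments at $t = \pm T$ vanish as $T\to\infty$. The same $\Gamma$-decay yields absolute convergence of the full residue sum: residues at simple zeros total $O(x^{1/4})$, and residues at any zeros of multiplicity $m'<m$ total $O(x^{1/4}(\log x)^{m-2})$. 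Taking $m$ to be the largest multiplicity occurring (one may do so without loss of generality, since $\xO_\pm$ at a higher order implies the stated relation), one concludes that $\phi_0(x,\xl)$ equals the above cosine term plus $O(x^{1/4}(\log x)^{m-2})$. Choosing $x_n := \exp((4\pi n - 2\xq_0)/\xg)$ and $x_n' := \exp(((4n+2)\pi - 2\xq_0)/\xg)$, on which the cosine equals $+1$ and $-1$ respectively, gives sequences along which $\phi_0(x_n)\ge(C/2)x_n^{1/4}(\log x_n)^{m-1}$ and $\phi_0(x_n')\le -(C/2)(x_n')^{1/4}(\log x_n')^{m-1}$ for all $n$ large, yielding $\phi_0 = \xO_\pm(x^{1/4}(\log x)^{m-1})$.

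The main obstacle is making the contour shift rigorous despite the infinite collection of poles on the critical line, especially without an effective-simplicity assumption: one has no polynomial bound on $1/\xz'(\rho_k)$ at simple zeros from RH alone. The key is that the rapid exponential decay of $|\Gamma(s)|$ on vertical lines swamps any admissible growth of $1/\xz'(\rho_k)$, so the residue sum converges absolutely and the dominant oscillatory term at the maximum-multiplicity zeros can be extracted cleanly; when several zeros share the maximum multiplicity, the combined main term is a nonzero almost-periodic trigonometric polynomial in $\log x$, which attains values of both signs bounded away from zero via a Kronecker-type argument, still yielding $\xO_\pm$.
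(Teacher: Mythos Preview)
Your approach is genuinely different from the paper's. The paper does not shift the contour or sum residues; instead it adapts the Landau argument of Lemma~\ref{M:lem:phi0Omega} (in turn modeled on \cite{montgomery2007multiplicative}*{Thm.~15.3}). One supposes for contradiction that $\phi_0(x) < c\,x^{1/4}(\log x)^{m-1}$ for all large $x$, forms the Mellin integral $\int_1^\infty (c\,x^{1/4}(\log x)^{m-1}-\phi_0(x))x^{-s/2-1}\,dx$, and applies Lemma~\ref{lem:Landau}: the real point $s=\tf12$ would have to be a singularity if it were the abscissa of convergence, yet the right-hand side (after the comparison term produces only an order-$m$ pole at $\tf12$) has its nearest nonreal singularity at $\rho$, yielding the contradiction. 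This argument uses only the existence of \emph{one} pole of order $m$ at $\rho$, and needs no information whatsoever about the other residues.

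Your argument, by contrast, has a real gap at the step ``The same $\Gamma$-decay yields absolute convergence of the full residue sum.'' Under RH alone there is \emph{no} bound on the Laurent coefficients of $1/\xz(s)$ at the nontrivial zeros: at a simple zero $\rho'=\tf12+i\xg'$ one has $\fc_{\rho'}=\eta(\rho')\xz(2\rho')\Gamma(\rho')/\xz'(\rho')$, and nothing in RH prevents $|\xz'(\rho')|$ from being, say, doubly exponentially small in $|\xg'|$, which would overwhelm the decay of $|\Gamma(\rho')|\ll e^{-\pi|\xg'|/2}$. The claim that $\Gamma$-decay ``swamps any admissible growth of $1/\xz'(\rho_k)$'' is thus unjustified; indeed this is precisely why the paper introduces the separate Effective Simplicity hypothesis \eqref{eq:ZetaDerivBB} in the later sections. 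A related issue is that you take $m$ to be the largest multiplicity occurring, but nothing guarantees a maximum multiplicity exists. The Landau approach sidesteps both problems entirely.
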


\begin{proof}
    The result follows from a minor modification of the proof of Lemma \ref{M:lem:phi0Omega}, which is itself based on the proof of Theorem 15.3 in Montgomery and Vaughan \cite{montgomery2007multiplicative}, specifically following the discussions on page 467 therein.
\end{proof}

Considering Lemmata \ref{B:lem:DerivDecay}, \ref{B:lem:Omega*Approx}, and \ref{L:lem:phi0Omega} and Proposition \ref{B:prop:Biasymp}, it is evident that their results remain valid if expressions $x^{A-\xe}$ (where $A = \xQ/2$ for us) are changed to $x^{\nfr14}\log^{m-1}x$. As such, using the same arguments as in proving Theorem \ref{thm:nonRHBiasymp} we at once establish the biasymptotic result in Theorem \ref{thm:nonsimple}, namely: \emph{If RH holds but SH fails, then $\spnxl$ is biasymptotic with factor $\tf12$.}

We now introduce another hypothesis. We say that the Linear Independence Hypothesis (LI) holds if the set $\{|\xg| : \xz(1/2+i\xg) = 0\}$ is linearly independent over the set $\signs$. It is believed by some that said set of ordinates is even linearly independent over $\mathbb{Q}$, but this is more than needed here. The following arguments are taken from \cite{montgomery2007multiplicative}*{Thm.\ 15.5}, our arguments only require this weaker independence condition.

\begin{lemma}
    \label{lem:LI}
    Assume RH, SH, and LI, and suppose that there exist zeros $\rho_1, \rho_2, \ldots, \rho_K$ with ordinates $0 < \xg_1 < \xg_2 < \cdots < \xg_K$ such that 
        \<
            \label{R:eq:ResIneq}
            \sum_{k=1}^K |\fc_{\rho_k}| > \fc_0.
        \>
    Then writing $\phi_0(x,\xl)=\phi_0(x)$ one has
        \<
            \label{R:eq:phi0Sup}
            \limsup_{x\to\infty} \fr{\phi_0(x)}{x^{1/4}} \geq \tf12\sum_{k=1}^K |\fc_{\rho_k}| - \tf12\fc_0
        \>
    and
        \<
            \label{R:eq:phi0Inf}
            \liminf_{x\to\infty} \fr{\phi_0(x)}{x^{1/4}} \leq - \tf12\sum_{k=1}^K |\fc_{\rho_k}| - \tf12\fc_0, 
        \>
    and in particular one has
        \[
            \phi_0(x) = \xO_{\pm} (x^{1/4}).
        \]
\end{lemma}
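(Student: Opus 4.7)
The plan is to realize $\phi_0(x,\xl)$ as an explicit-formula sum by shifting the defining Mellin integral past the pole at $s=\tf12$ and past finitely many critical zeros of $\xz(s)$, then to exploit LI to extract an oscillatory lower bound from the contributions of $\rho_1,\ldots,\rho_K$. Starting from Proposition~\ref{L:prop:pp*Asymp}, I would write $\phi_0(x)=\vdint{\xs}H(s)x^{s/2}\,ds$ for $\xs>\tf12$ and move the contour left to a line $\Re(s)=\xs_0$ with $0<\xs_0<\tf12$, truncated at height $|\Im s|\leq T$ for some $T\geq\xg_K$. The residue theorem captures the simple pole of $\xz(2s)$ at $s=\tf12$ (contributing $-\fc_0\,x^{1/4}$ by \eqref{eq:fc0}) and, by SH, the simple poles of $1/\xz(s)$ at each zero $\rho=\tf12+i\xg$ with $|\xg|\leq T$ (each contributing $\fc_\rho\,x^{\rho/2}$), giving
\[
  \phi_0(x)=-\fc_0\,x^{1/4}+\sum_{|\xg|\leq T}\fc_\rho\,x^{\rho/2}+R(x,T),
\]
where $R(x,T)$ collects the shifted vertical integral on $\Re(s)=\xs_0$ and the two horizontal connectors at $\Im(s)=\pm T$. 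On RH, Lemma~\ref{lem:LittlewoodMod} supplies $|1/\xz(s)|\less t^{\xe}$ uniformly for $\xs\geq\xs_0$, and combining this with the exponential decay of $|\xG(s)|$ in \eqref{eq:GammaBB} and Lemma~\ref{lem:SSZeta} for $\xz(2s)$ gives a bound of shape $R(x,T)\less x^{\xs_0/2}+e^{-\pi T/4}x^{1/2}$; choosing $T=T(x)\to\infty$ slowly enough yields $R(x,T)=o(x^{1/4})$.

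Second, pairing conjugate zeros (using $\fc_{\bar\rho}=\overline{\fc_\rho}$) and setting $u=\tf12\log x$ and $\theta_\rho=\arg\fc_\rho$, the identity above takes the form
\[
  \fr{\phi_0(x)}{x^{1/4}}=-\fc_0+2\sum_{k=1}^{K}|\fc_{\rho_k}|\cos(\xg_k u+\theta_k)+2\!\!\sum_{\xg_K<\xg\leq T(x)}\!\!|\fc_\rho|\cos(\xg u+\theta_\rho)+o(1).
\]
I would then invoke LI (the $\signs$-linear independence of $\{|\xg|\}$) through a Dirichlet simultaneous-approximation argument in the spirit of Montgomery--Vaughan \cite{montgomery2007multiplicative}*{Thm.\ 15.5}, producing an unbounded sequence $u_n$ along which the principal $K$-term cosine sum realizes at least $\tf12\sum_{k=1}^{K}|\fc_{\rho_k}|-o(1)$, and a second sequence $u_n^*$ along which it realizes at most $-\tf12\sum_{k=1}^{K}|\fc_{\rho_k}|+o(1)$; an averaging argument then shows that the tail cosine sum contributes $o(1)$ along both families of sequences. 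Substituting back delivers the claimed $\limsup$ and $\liminf$ inequalities, and since $\sum_k|\fc_{\rho_k}|>\fc_0$ both bounds are strictly signed, so $\phi_0(x)=\xO_{\pm}(x^{1/4})$ follows.

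The principal obstacle is the phase-alignment step: LI here supplies only $\signs$-linear independence of the $|\xg|$, which is strictly weaker than rational-linear independence and therefore stops short of Kronecker's theorem for simultaneous equidistribution on the torus. The factor $\tf12$ in the conclusion arises precisely from working within this weaker framework while also controlling the tail contribution uniformly along the Dirichlet-selected sequences. A secondary concern is balancing $\xs_0$ and $T$ in the contour shift so that $R(x,T)$ falls below $x^{1/4}$ without distorting the structure of the main oscillatory terms.
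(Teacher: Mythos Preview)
Your route diverges from the paper's in a way that matters. The paper does \emph{not} pass through an explicit formula or any Dirichlet/Kronecker phase-alignment. It runs an Ingham--Landau argument: assuming $\phi_0(x)\le cx^{1/4}$ for all large $x$, it forms
\[
I(s)=\int_1^\infty \frac{cx^{1/4}-\phi_0(x)}{x^{s/2+1}}\prod_{k=1}^K\bigl(1+\cos(\vphi_k-\xg_k\log x)\bigr)\,dx,
\]
expands the product, and applies Landau's theorem (Lemma~\ref{lem:Landau}). The $\signs$-independence of the $\xg_k$ is used precisely to ensure that the cross terms $J(s)$ (those arising from $\sum_k\xe_k\xg_k$ with at least two $\xe_k\ne0$) are analytic at $s=\tf12$, so the residue of $I(s)$ there is $2c+\fc_0+\sum_k\Re[e^{i\vphi_k}\fc_{\rho_k}]$; choosing $\vphi_k$ so that $e^{i\vphi_k}\fc_{\rho_k}=-|\fc_{\rho_k}|$ and using nonnegativity of the integrand forces this residue to be $\ge0$, giving $2c\ge\sum_k|\fc_{\rho_k}|-\fc_0$. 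This \emph{is} the method of \cite{montgomery2007multiplicative}*{Thm.~15.5}, so your citation is correct but your description of it as a ``Dirichlet simultaneous-approximation argument'' is not.

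Your own sketch has two genuine gaps. First, $\signs$-linear independence is too weak to drive a Kronecker-type simultaneous alignment of the phases $\xg_k u$; you flag this but do not resolve it, and the naive Dirichlet box principle (which requires no independence at all) would align \emph{all} ordinates up to $T(x)$ at once, erasing any distinction between principal and tail zeros. Second, and more damaging, the tail $2\sum_{\xg_K<\xg\le T(x)}|\fc_\rho|\cos(\xg u+\theta_\rho)$ cannot be shown to be $o(1)$ along your selected sequences without something like $\sum_\rho|\fc_\rho|<\infty$, which in this paper is obtained only under ES (Lemma~\ref{lem:CritCoeffBB}) --- a hypothesis \emph{not} assumed in the present lemma. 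You cannot take $T$ fixed, since then $R(x,T)\asymp e^{-\pi T/4}x^{1/2}$ overwhelms $x^{1/4}$; and once $T(x)\to\infty$ the tail involves unboundedly many residues of uncontrolled size under RH+SH alone. The Ingham kernel sidesteps both issues at once: only the $K$ chosen ordinates enter, no explicit formula is needed, and $\signs$-independence is exactly the right hypothesis for the analyticity of $J(s)$.
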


\begin{proof}
    Let $c > 0$. Recalling that $\MelZero{\phi_0(x)}{s} = \int_0^1 \phi_0(x)x^{-s-1} \,ds$ and noting that
        \[
            \int_1^\infty \fr{cx^{1/4} - \phi_0(x)}{x^{s/2+1}} \,dx = \fr{2c}{s-1/2} - 2H(s) + \MelZero{\phi_0(x)}{-\nfr{s}{2}} \qquad (\xs>\tf12),
        \]
    we define 
        \begin{align}
            W\hspace{-0.08em}(s) &= \int_1^\infty \fr{cx^{1/4} - \phi_0(x)}{x^{s/2+1}} \,dx, \\
            \sW\hspace{-0.08em}(s;\vphi) &= \tf12 e^{i\vphi} W(s) + \tf12 e^{-i\vphi} W(\bar{s}),
        \end{align}
    and
        \<
            \label{R:eq:IDefin}
            I(s) = \int_1^{\infty} \fr{cx^{1/4}-\phi_0(x)}{x^{s/2+1}} \prod_{k=1}^K \lh\{ 1 + \cos(\vphi_k - \gamma_k \log x) \rh\} \,dx, 
        \>
    where $\vphi_1,\vphi_2,\ldots,\vphi_K$ are real numbers to be determined below. Note that $W\hspace{-0.08em}(s)$ has a simple pole at $s=1/2$ with residue $2c + \fc_0$, where $\fc_0 = -\Res_{s=1/2}[H(s)x^{s/2}]$. Multiplying out the product in \eqref{R:eq:IDefin} we have
        \<
            \label{R:eq:IFormula}
            I(s) = W\hspace{-0.08em}(s) + \sum_{k=1}^K \sW\hspace{-0.08em}(s + i \gamma_k ; \vphi_k) + J(s),
        \>
    where $J(s)$ is a linear combinations of terms of the form
        \[
            \sW\hspace{-0.08em}\lh(s + i \tssum_k \xe_k \gamma_k; \,\, \tssum_k \xe_k \vphi_k\rh)
        \]
    with all $\xe_k \in \signs$ and at least two of the $\xe_k$ nonzero.

    We now suppose that $\phi_0(x) \leq cx^{1/4}$ for all $x > 0$, noting that, by the statement of Landau's theorem (Lemma \ref{lem:Landau}), no generality is lost by assuming the inequality to hold for all $x$. Applying Landau's theorem we see that $I(s)$ converges for $\sigma > 1/2$, and the linear independence assumption implies that $J(s)$ is analytic at the point $s=1/2$. Thus, the function $I(s)$ has a simple pole at $s = 1/2$ with residue
        \[
            2c + \fc_0 + \sum_{k=1}^K \Re\lh[ e^{i\vphi_k} \fc_{\rho_k} \rh].
        \]
    
    Select the $\vphi_k$ so that $e^{i\vphi_k} \fc_k = -|\fc_k|$. As $I(s)$ is uniformly bounded below for $s > 1/2$, by letting $s$ tend to $1/2$ from above (on the real axis) in equation \eqref{R:eq:IFormula} we find that
        \[
            2c \geq \sum_{k=1}^K |\fc_{\rho_k}| - \fc_0,
        \]
    establishing inequality \eqref{R:eq:phi0Sup}. The proof of inequality \eqref{R:eq:phi0Inf} is similar, and the assertion that $\phi_0(x) = \xO_{\pm}(x^{1/4})$ follows at once from inequalities \eqref{R:eq:phi0Sup} and \eqref{R:eq:phi0Inf}.
\end{proof}

Since Lemma \ref{lem:LI} provides an $\xO$--result for $\phi_0(x)$ in place of, e.g., Lemma \ref{L:lem:phi0Omega}, we argue as in the proof of Theorem \ref{thm:nonRHBiasymp} to conclude the following similar result.

\begin{theorem}
    \label{R:thm:LargeResSubs}
    If the hypotheses of Lemma \ref{lem:LI} hold, then $\spnxl$ is biasymptotic with factor $\tf12$. 
\end{theorem}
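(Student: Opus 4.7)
The plan is to mirror the proof of Theorem~\ref{thm:nonRHBiasymp} step-for-step, using Lemma~\ref{lem:LI} in place of Lemma~\ref{L:lem:phi0Omega}; the author signals this intent in the paragraph immediately preceding the theorem statement. Under the hypotheses of Lemma~\ref{lem:LI} we are assuming RH, SH, LI, and the inequality \eqref{R:eq:ResIneq}, so in particular every ingredient specific to $\xQ = 1/2$ in sections \ref{sec:Lpp*}--\ref{sec:LBiasymp} remains available.

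First, I would invoke Lemma~\ref{lem:LI} to obtain $\phi_0(x,\xl) = \xO_\pm(x^{1/4})$, which (for $x \geq 1$) trivially implies $\phi_0(x,\xl) = \xO_\pm(x^{1/4-\xe})$ for every $\xe > 0$. Next, the derivative bound \eqref{L:eq:phi0DerivBB} under RH reads $|\phi_0'(x)| \less x^{-3/4+\xe'}$ for every $\xe' > 0$. Applying Lemma~\ref{B:lem:DerivDecay}---whose proof only requires that the error contribution from the derivative be asymptotically negligible against the main term $x^a$, so the mild mismatch between $-3/4+\xe'$ and the ideal exponent $-3/4-\xe$ is inconsequential---upgrades the relation to $\phi_0(x,\xl) = \xO_\pm^*(x^{1/4-\xe})$. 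Since Proposition~\ref{L:prop:pp*Asymp} yields $\phi_*(x,\xl) = \phi_0(x,\xl) + O(x^{\xe})$ under RH (as the critical cancellation $\xQ/2-\Qdel = 0$ takes effect), Lemma~\ref{B:lem:Omega*Approx} then transfers the string relation to $\phi_*(x,\xl) = \xO_\pm^*(x^{1/4-\xe})$.

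Finally, Lemma~\ref{L:lem:pp*} places $p(x,\xl)$ in the form \eqref{B:eq:pxfForm}, while Proposition~\ref{L:prop:pp*Asymp} supplies the asymptotic formulae $\phi(x,\xl) = \tf12\xk\rt{x} + O(x^{1/4+\xe})$ and $\phi_*(x,\xl) = O(x^{1/4+\xe})$; all four hypotheses of Proposition~\ref{B:prop:Biasymp} are then in hand with $\xa = \tf12$ and $A = \tf14$, giving the conclusion that $\spnxl$ is biasymptotic with factor $\tf12$. I do not anticipate any substantial obstacle: the analytic heavy lifting has already been carried out in Lemma~\ref{lem:LI}, and the remaining steps are purely bookkeeping, essentially identical to the corresponding moves in the proof of Theorem~\ref{thm:nonRHBiasymp}---indeed, under RH the cancellation in the error term of Proposition~\ref{L:prop:pp*Asymp} makes the $\phi_0$-to-$\phi_*$ transfer particularly painless.
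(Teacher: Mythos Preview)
Your proposal is correct and follows essentially the same route as the paper, which merely remarks that Lemma~\ref{lem:LI} supplies an $\Omega$--result in place of Lemma~\ref{L:lem:phi0Omega} and that one then argues exactly as in the proof of Theorem~\ref{thm:nonRHBiasymp}. Your explicit acknowledgement of the harmless exponent mismatch when invoking Lemma~\ref{B:lem:DerivDecay} is apt (the same mismatch is present already in the proof of Theorem~\ref{thm:nonRHBiasymp} and is absorbed by the slack in the lemma's mean-value argument).
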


\section{The case of small residues}
\label{sec:RiemannExact}

\subsection{A Quasi-exact Formula.}
We now consider conditions under which relation \eqref{L:eq:asm} holds, i.e., conditions on which
    \[
        p(n,\xl) = (-1)^n \expp{ \tf12 \xk\rt{n}+o(\rt{n}) } \qquad (\xk = \pi\rt{2/3},\,\, n \to \infty),
    \]
or equivalently conditions on which $((-1)^n p(n,\xl))_\nn$ is asymptotic with factor $\tf12$. Specifically, recalling again that
    \[
        \fz = \xz(2), \quad \xk=\pi\rt{2/3}, \quad\text{and}\quad \eta(s) = (\xk/\fz)^s(1-2^{-s-1})\xz(s+1),
    \]
we now conduct a more precise analysis of the integral
    \<
        \label{R:eq:phi0Restate}
        \phi_0(x) = \ldint{\xs} \eta(s)\fr{\xz(2s)}{\xz(s)}\Gamma(s)x^{s/2} \,ds \qquad (\xs > \tf12)
    \>
from \eqref{R:eq:phi0Heuri} by moving the line of integration to the left of $\xs = 1/2$. 

We first require two bounds for $\xz(s)$ on assumption of RH.

\begin{lemma}[{\cite{montgomery2007multiplicative}*{Thm.\ 13.22}}]
    \label{lem:ZetaTSeqMV}
    Assume RH. There exists an absolute constant $C$ such that for every $T \geq 4$ there is a $t$, with $T \leq t \leq T+1$, such that
        \[
            |\xz(\xs+it)| \geq \expp{-C\fr{\log{T}}{\log\log{T}} } \qquad \text{for $-1 \leq \xs \leq 2$.}
        \]
\end{lemma}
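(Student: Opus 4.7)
The plan is to combine a pigeonhole selection of $t \in [T, T+1]$ with Littlewood's conditional upper bound on $\log|\xz|$ and the Borel--Carath\'eodory theorem, and then to use the functional equation to extend the lower bound from $\xs \geq \tf12$ to $\xs \geq -1$.

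First I would use the Riemann--von Mangoldt formula $N(T+2) - N(T-1) \less \log T$ to count ordinates $\xg$ of zeta zeros near $T$, and by pigeonhole select $t \in [T,T+1]$ such that $|t - \xg| \great 1/\log T$ for every ordinate $\xg$ within distance $2$ of $t$. This guarantees that $\log\xz$ is analytic and single-valued in a suitable disc centered at $2 + it$ that reaches into the strip we care about.

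Second I would invoke Littlewood's theorem (cf.\ Titchmarsh, Ch.\ 14) which asserts that on RH
\[
    \log|\xz(s)| \less \fr{\log t}{\log\log t} \qquad (\tf12 \leq \xs \leq 2, \,\, t\geq 2).
\]
Applying the Borel--Carath\'eodory inequality to $\log\xz$ on a disc centered at $s_0 = 2 + it$ whose radius is chosen to avoid the nearby ordinates picked out by the pigeonhole step, this upper bound on $\Re\log\xz$ converts into a two-sided estimate $|\log \xz(s)| \less \log T/\log\log T$ on a smaller concentric disc covering the range $\tf12 \leq \xs \leq 2$; note that $|\log\xz(2+it)| = O(1)$ supplies the required base value at the center. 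Exponentiating yields $|\xz(\xs+it)| \great \expp{-C\log T / \log\log T}$ throughout $\tf12 \leq \xs \leq 2$.

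Third, to extend to $-1 \leq \xs \leq \tf12$ I would apply the functional equation $\xz(s) = \chi(s)\xz(1-s)$ with $|\chi(s)| \asymp (t/2\pi)^{\fr12 - \xs}$ by Stirling's formula. The reflection merely multiplies the existing lower bound by a factor that is polynomial in $t$, and such a factor is harmlessly absorbed into a slight enlargement of the constant $C$. The main obstacle is the Borel--Carath\'eodory step: the disc around $2+it$ must be zero-free, which forces its radius to be controlled by the pigeonhole gap $1/\log T$, and the loss from passing from the upper bound on $\Re\log\xz$ to a bound on $|\log\xz|$ is precisely what delivers the exponent $\log T/\log\log T$ rather than a cleaner power of $\log T$. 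Balancing the disc radius against the scale of the Littlewood estimate, while keeping track of the pigeonhole constant, is the most delicate bookkeeping in the proof.
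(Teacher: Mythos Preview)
The paper does not prove this lemma at all; it is simply quoted from Montgomery--Vaughan \cite{montgomery2007multiplicative}*{Thm.\ 13.22} and used as a black box. So there is nothing to compare your argument to in the paper itself, but your sketch has a genuine gap worth flagging.

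The problem is in your Borel--Carath\'eodory step. You propose to apply it to $\log\xz$ on a disc centered at $2+it$ that (i) is zero-free and (ii) has inner concentric disc reaching $\xs=\tf12$. These requirements are incompatible. On RH the zeros sit on $\xs=\tf12$ with average spacing $\asymp 1/\log T$, so \emph{any} disc of radius $>3/2$ about $2+it$ contains many zeros $\tf12+i\xg$ with $|t-\xg|$ larger than your pigeonhole gap $\asymp 1/\log T$ but still $<1$, say. The pigeonhole choice of $t$ keeps the single horizontal segment zero-free, not a two-dimensional disc. If instead you squeeze the outer radius to $R=\tf32+c/(\log T)^2$ so that the disc just misses the nearest zeros (using $|t-\xg|\gg 1/\log T$), then with inner radius $r=\tf32$ the Borel--Carath\'eodory loss factor $R/(R-r)\asymp(\log T)^2$ turns Littlewood's $\log T/\log\log T$ upper bound into $(\log T)^3/\log\log T$, which is far too weak.

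The standard fix (and what Montgomery--Vaughan actually do) is to work not with $\log\xz(s)$ but with $\log\!\big(\xz(s)\prod_{|\xg-t|\le 1}(s-\rho)^{-1}\big)$, which \emph{is} analytic in the large disc; one then bounds the removed product $\prod(s-\rho)$ separately on the horizontal segment using the pigeonhole gap $|t-\xg|\gg 1/\log T$ together with $N(t+1)-N(t-1)\ll\log T$. Equivalently one can integrate the partial-fraction approximation $\xz'/\xz(s)=\sum_{|\xg-t|\le 1}(s-\rho)^{-1}+O(\log t)$ along the segment. Your ingredients (pigeonhole, Littlewood's bound, functional equation for $\xs<\tf12$) are all correct; what is missing is this zero-removal device, without which the Borel--Carath\'eodory step cannot be made to yield the stated exponent.
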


We make use of the above lemma via the following immediate corollary.

\begin{corollary}
    \label{R:cor:ZetaTSeq}
    Assume RH and let $\xe > 0$. For all $U > U_0=U_0(\xe)$ there exists a $T$, with $U \leq T \leq U+1$, such that
        \<
            \label{R:eq:ZetaTIneq}
            |\xz(\xs+iT)|^{-1} \leq U^\xe \qquad \text{for $-1 \leq \xs \leq 2$.}
        \> 
\end{corollary}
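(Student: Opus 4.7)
The plan is to derive this as a direct quantitative translation of Lemma \ref{lem:ZetaTSeqMV}. Given $U \geq 4$, I would apply Lemma \ref{lem:ZetaTSeqMV} with its parameter ``$T$'' set equal to $U$, which produces some $T \in [U, U+1]$ satisfying
\[
    |\xz(\xs+iT)| \geq \expp{-C\fr{\log U}{\log\log U}} \qquad (-1 \leq \xs \leq 2)
\]
for an absolute constant $C > 0$. Taking reciprocals gives $|\xz(\xs+iT)|^{-1} \leq U^{C/\log\log U}$ on the same $\xs$-range.

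The final step is to absorb the exponent $C/\log\log U$ into $\xe$: since $\log\log U \to \infty$, for every fixed $\xe > 0$ there is some $U_0 = U_0(\xe)$ such that $C/\log\log U < \xe$ whenever $U > U_0$, and hence $U^{C/\log\log U} \leq U^{\xe}$. This yields the bound \eqref{R:eq:ZetaTIneq} for all $U > U_0$, completing the proof.

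There is no real obstacle here; the statement is genuinely a one-line corollary, and the only substantive input is the already-cited result of Montgomery--Vaughan. The only care required is to note that the lemma's hypothesis $T \geq 4$ is automatic once $U_0(\xe)$ is taken large enough, which is harmless since we have already allowed $U_0$ to depend on $\xe$.
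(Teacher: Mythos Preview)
Your proposal is correct and matches the paper's approach exactly: the paper simply labels this an ``immediate corollary'' of Lemma~\ref{lem:ZetaTSeqMV} without further argument, and your derivation is precisely the intended one-line translation of the bound $\exp(-C\log U/\log\log U)$ into $U^{-\xe}$ for $U$ large.
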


We now give a ``quasi-exact'' formula for $\phi_0(x)$ using \eqref{R:eq:phi0Restate}, where we use the descriptor quasi-exact since our arguments are in the spirit of Titchmarsh's exact formula for the Mertens function $M(x) = \sum_{n \leq x} \xm(n)$, see e.g.\ \cite{titchmarsh1986theory}*{Thm.\ 14.27}.

\begin{proposition}
    \label{R:prop:phi*Exact}
    Assume RH and SH. For all $x > x_0$ there exist quantities $T_x$, with $\logx \leq T_x \leq \logx+1$, such that
        \<
            \label{R:eq:QuasiExact}
            \phi_0(x) = \fc_0 x^{1/4} + \sum_{|\xg|<T_x} \fc_\rho x^{\rho/2} + O(x^{1/6}).
        \>
\end{proposition}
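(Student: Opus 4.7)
My plan is a truncated contour shift in the spirit of Titchmarsh's derivation of the exact formula for $M(x)$. Starting from \eqref{R:eq:phi0Restate} on a line $\Re(s) = \xs_0 > \tf12$, I would truncate the integral to $|t| \leq T_x$ with a carefully chosen $T_x \in [\logx, \logx + 1]$, then shift the truncated contour leftward to $\Re(s) = \tf13$, collecting the residues encountered in the strip $\tf13 < \Re(s) < \xs_0$. Under SH each such residue is simple: the pole of $H(s)$ at $s = \tf12$ produces a term proportional to $x^{1/4}$ matching the $\fc_0 x^{1/4}$ of the statement (via the sign convention of \eqref{eq:fc0}), and each nontrivial zero $\rho = \tf12 + i\xg$ with $|\xg| < T_x$ produces $\fc_\rho x^{\rho/2}$. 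The task is then to show that every other term produced by the shift is $O(x^{1/6})$, which accounts for the choice of $\tf13$ as the target abscissa (since $|x^{s/2}| = x^{1/6}$ there).

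The crucial step is the choice of $T_x$, which I would make via Corollary \ref{R:cor:ZetaTSeq}: for all $x > x_0(\xe)$ there exists $T_x \in [\logx, \logx + 1]$ with $|\xz(\xs + iT_x)|^{-1} \leq (\logx)^\xe$ uniformly for $-1 \leq \xs \leq 2$. Combined with the RH-conditional estimate $|\xz(\xs + it)|^{\pm 1} \less |t|^\xe$ from Lemma \ref{lem:ZetaPowerBB} (valid for $\xs > \tf12$) and the exponential bound $|\xG(\xs + iT_x)| \less T_x^{\xs - 1/2} e^{-\pi T_x/2}$ from \eqref{eq:GammaBB}, this shows that each horizontal segment $\{\xs \pm iT_x : \tf13 \leq \xs \leq \xs_0\}$ contributes at most $\less x^{\xs_0/2} (\logx)^B e^{-\pi T_x/2}$ for some $B > 0$, and since $e^{-\pi T_x/2} \leq x^{-\pi/2}$ this is super-polynomially small in $x$.

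The tails $|t| > T_x$ on the vertical lines $\Re(s) = \xs_0$ and $\Re(s) = \tf13$ are controlled in the same way, the exponential decay of $\xG$ dominating the polynomial growth of the remaining factors. The main vertical integral on $\Re(s) = \tf13$ is then directly $O(x^{1/6})$, since $|x^{s/2}| = x^{1/6}$ on that line and the integrand is absolutely integrable in $t$. The one subtlety here is that $|1/\xz(\tf13 + it)|$ must be bounded polynomially, which follows from the functional equation $\xz(s) = \chi(s)\xz(1-s)$ combined with Lemma \ref{lem:ZetaPowerBB} applied on $\Re(w) = \tf23$ and the Stirling-estimated polynomial growth of $\chi(\tf13+it)$, both of which are absorbed by the $\xG$-decay in the integrand.

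The main obstacle, as I see it, is securing this uniform control of $|1/\xz(s)|$ as the contour crosses the critical line: no uniform bound is possible on $\Re(s) = \tf12$ itself, and the horizontal segments must pass between individual zeros whose vertical density grows with $T$. The resolution is exactly the height-$T_x$ bound of Corollary \ref{R:cor:ZetaTSeq}, at the mild cost of losing any precise choice of $T_x$ within an interval of unit length — which is harmless since only a rough estimate $T_x \asymp \logx$ is needed for the residue sum to capture all ordinates $|\xg| < T_x$. Once the horizontal, tail, and shifted-line estimates are in hand, the argument reduces to the residue calculation giving the two explicit main terms on the right-hand side of the stated formula.
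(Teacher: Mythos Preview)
Your proposal is correct and follows essentially the same route as the paper: truncate on a line $\Re(s)>\tf12$, choose $T_x\in[\logx,\logx+1]$ via Corollary~\ref{R:cor:ZetaTSeq}, shift to $\Re(s)=\tf13$, and bound the horizontal and tail pieces using the exponential decay of $\xG$ together with the conditional bounds on $\xz$ and $1/\xz$. The paper takes $\xs_0=1$ and, on the line $\Re(s)=\tf13$, rewrites the integrand via the functional equation so that a factor $\xG^2(s)$ appears (rather than bounding $1/\xz(\tf13+it)$ through $\chi(s)$ as you suggest), but this is only a cosmetic difference and both versions yield the required $O(x^{1/6})$.
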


\begin{proof}
    Fix $\xe>0$. On assumption of RH, we recall from Lemma \ref{lem:ZetaPowerBB} that if $[\xs_0,\xs_1] \subset (\tf12,\infty)$ and $\xd > 0$, then $|\xz(\xs+it)|^{-1} \less t^\xd$ uniformly for $\xs_0 \leq \xs \leq \xs_1$. Letting $\xs_0 = 2/3$, $\xs_1 = 2$, and $\xd = 1$, it follows that there exist some $t_0 > 0$ and $c_0 > 0$ such that
        \<
            \label{R:eq:ZetaIneq}
            |\xz(\xs+it)|^{-1} \leq c_0 t \qquad (\tf23\leq\xs\leq 2,\,\,t>t_0).
        \>
    As this inequality clearly remains valid if $t_0$ is replaced with $\max\{t_0,1\}$, we assume without loss of generality that $t_0 \geq 1$. 

    Let $T > t_0 \geq 1$ be a large parameter to be determined below; these lower bounds allow for our use of inequality \eqref{R:eq:ZetaIneq} in the following arguments. Using equation \eqref{R:eq:phi0Restate} with $\xs = 1$ we truncate the integral at height $T$ to write
        \<
            \label{R:eq:phi0Trunc}
            \ldint{1} H(s)x^{s/2} \,ds = \dint_{1-iT}^{1+iT} \eta(s)\fr{\xz(2s)}{\xz(s)}\Gamma(s)x^{s/2} \,ds + \cR(x,T).
        \>
    As $\eta(s) \less 1$ uniformly for $1/3 \leq \xs \leq 1$ and for $t > T$ we have, by \eqref{eq:GammaBB}, that
        \[
            |\xz(1+it)|^{-1} \less t \qqand |\xG(1+it)| \less t^{1/2}e^{-\pi t/2},
        \]
    we deduce that
        \<
            \label{R:eq:phi0RBB}
            \cR(x,T) \less x^{1/2} \int_T^\infty t^{3/2} e^{-\pi t/2} \,dt \less x^{1/2} \int_T^\infty e^{-\pi t/3} \,dt \less x^{1/2} e^{-\pi T/3}.
        \>

    Returning to equation \eqref{R:eq:phi0Trunc} and integrating around the rectangle with vertices $1-iT$, $1+iT$, $1/3+iT$, and $1/3-iT$, the residue calculus provides that
        \<
            \label{R:eq:phi0Res}
            \begin{aligned}
                \dint_{1-iT}^{1+iT} \eta(s)\fr{\xz(2s)}{\xz(s)}\Gamma(s)x^{s/2} \,ds &= \lh[ \dint_{1-iT}^{\fr13-iT} + \,\dint_{\fr13-iT}^{\fr13+iT} + \,\dint_{\fr13+iT}^{1+iT} \rh] \eta(s)\fr{\xz(2s)}{\xz(s)}\Gamma(s)x^{s/2} \,ds \\
                & \qquad -\fc_0x^{1/4} + \sum_{|\xg|<T} \fc_\rho x^{\rho/2}.
            \end{aligned}
        \>
    First examining the integral from $\fr13+iT$ to $1+iT$ and again using \eqref{eq:GammaBB}, we have
        \<
            \label{R:eq:phi024} 
            \dint_{\fr13+iT}^{1+iT} \eta(s)\fr{\xz(2s)}{\xz(s)}\Gamma(s)x^{\nfr{s}{2}} \,ds \less x^{1/2} T^{1/2} e^{-\pi T/2} \int_{\fr13}^{1} \afrac{\xz(2\xs+2iT)}{\xz(\xs+iT)} \,d\xs.    
        \>
    As $\xz(s) \less |t|^{1/3+\xe}$ uniformly for $\xs \geq \fr23$ by Lemma \ref{lem:SSZeta}, we have
        \[
            |\xz(2\xs + 2iT)| \less T \qquad (\tf13 \leq \xs \leq 1),
        \] 
    so it remains to consider the factor $|\xz(\xs+iT)|^{-1}$ in \eqref{R:eq:phi024}. 
    
    For clarity in applying Corollary \ref{R:cor:ZetaTSeq}, using our fixed $\xe > 0$ let $U_0$ be the constant described in said corollary; by replacing $U_0$ with $\max\{U_0,t_0\}$ if necessary, we assume without loss of generality that $U_0 \geq t_0$. We now suppose that $U>U_0$ and that our $T$ from above is such that $U \leq T \leq U+1$ and 
        \[
            |\xz(\xs+iT)|^{-1} \leq U^\xe \qquad (-1\leq\xs\leq2).
        \]
    It then follows that
        \[
            x^{\nfr12} T^{\nfr12} e^{-\pi T/2} \int_{\fr13}^{1} \afrac{\xz(2\xs+2iT)}{\xz(\xs+iT)} \,d\xs
                \less x^{\nfr12} U^{\nfr12} e^{-\pi U/2} U^{1+\xe} \less x^{\nfr12} e^{-\pi U/3}.
        \]
    We deduce that
        \<
            \label{R:eq:phi024BB}
            \dint_{\fr13+iT}^{1+iT} \eta(s)\fr{\xz(2s)}{\xz(s)}\Gamma(s)x^{\nfr{s}{2}} \,ds \less x^{1/2} e^{-\pi U/3},
        \>
    and the same inequality also holds for the integral from $1-iT$ to $1/3-iT$ in \eqref{R:eq:phi0Res}.

    For the integral from $1/3-iT$ to $1/3+iT$ in \eqref{R:eq:phi0Res} we use the functional equation
        \[
            \xz(1-s) = 2^{1-s}\pi^{-s}\cos(\tf12\pi s)\Gamma(s)\xz(s)
        \]
    to write
        \<
            \label{R:eq:phi03}
            \dint_{\fr13-iT}^{\fr13+iT} \eta(s) \fr{\xz(2s)}{\xz(s)}\Gamma(s)x^{s/2} \,ds 
            = \dint_{\fr13-iT}^{\fr13+iT} \fr{2\eta(s)\xz(2s)}{(2\pi)^s\xz(1-s)} \cos(\tf12\pi s) \Gamma^2(s) x^{s/2} \, ds.
        \>
    For $s = 1/3 + it$ with $t > 0$ we bound
        \[
            |\Gamma(s)|^2 \less ( \tf19 + t^2)^{-1/6}e^{-\pi t} \qqand (2\pi)^{-s}\eta(s)\cos(\tf12\pi s) \less e^{\pi t/2},
        \] 
    whereby integral \eqref{R:eq:phi03} is
        \[
            \less x^{1/6} \int_{0}^T \ABS{ \fr{\xz(\fr23+2it)}{\xz(\fr23-it)}} (\tf19+t^2)^{-1/6} e^{-\pi t/2} \,dt.
        \]
    We first change the lower limit of integration here from $0$ to $t_0$ at a cost of $O(x^{1/6})$. Then using arguments and bounds similar to those above, it follows that
        \<
            \label{R:eq:phi03BB}
            \dint_{\fr13-iT}^{\fr13+iT} \eta(s) \fr{\xz(2s)}{\xz(s)}\Gamma(s)x^{s/2} \,ds \less x^{\nfr16} \int_{t_0}^\infty t^{2-\nfr23}e^{-\pi t/2} \,dt \less x^{\nfr16}.
        \>

    Combining statements \eqref{R:eq:phi0Trunc}--\eqref{R:eq:phi0Res}, \eqref{R:eq:phi024BB}, and \eqref{R:eq:phi03BB} we deduce that
        \[
            \ldint{1} H(s)x^{s/2} \,ds = -\fc_0 x^{1/4} + \sum_{|\xg|<T} \fc_\rho x^{\rho/2} + O(x^{1/2}e^{-\pi U/3}) + O(x^{1/6}).
        \]
    Recalling that $T \in [U,U+1]$ depends on $U$, we let $U = \log x$ and assume that $x$ is sufficiently large that $\log x > U_0$, whereby $x^{1/2} e^{-\pi U/3} = x^{1/2-\pi/3} \less x^{-1/2}$ and the result follows.
\end{proof}

Recalling from equation \eqref{R:eq:phi*Reduc} that $\phi_*(x) = \phi_0(x) + O(x^\xe)$, we have the following immediate corollary.

\begin{corollary}
    \label{R:cor:phi*Asymp}
    Assume RH and SH. For all $x > x_0$ there exist quantities $T_x$, with $\logx \leq T_x \leq \logx+1$, such that
        \[
            \phi_*(x) = -\fc_0 x^{1/4} + \sum_{|\xg|<T_x} \fc_\rho x^{\rho/2} + O(x^{1/6}).
        \]
\end{corollary}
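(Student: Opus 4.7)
The plan is to obtain the corollary as an immediate consequence of Proposition \ref{R:prop:phi*Exact} together with the reduction \eqref{R:eq:phi*Reduc}. Under RH one has $\xQ=\tf12$, so in Proposition \ref{L:prop:pp*Asymp} the exponents $\xQ/2$ and $\xQ/2-\Qdel$ both equal $\tf14$ and $0$ respectively; consequently the error term $O(x^{\xQ/2-\Qdel+\xe})$ in equation \eqref{L:eq:phi*AsympRecall} collapses to $O(x^\xe)$, which is precisely the content of \eqref{R:eq:phi*Reduc}.

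Concretely, I would begin by invoking \eqref{R:eq:phi*Reduc} to write
\[
    \phi_*(x,\xl) = \phi_0(x,\xl) + O(x^\xe),
\]
valid for all sufficiently small $\xe>0$ and all large $x$. Then I would substitute the expression for $\phi_0(x,\xl)$ furnished by Proposition \ref{R:prop:phi*Exact}, which, for each $x>x_0$, provides a $T_x \in [\log x, \log x + 1]$ such that
\[
    \phi_0(x,\xl) = -\fc_0 x^{1/4} + \sum_{|\xg|<T_x} \fc_\rho x^{\rho/2} + O(x^{1/6}).
\]
Combining these two displays yields
\[
    \phi_*(x,\xl) = -\fc_0 x^{1/4} + \sum_{|\xg|<T_x} \fc_\rho x^{\rho/2} + O(x^{1/6}) + O(x^\xe).
\]

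Finally I would fix $\xe$ small enough (any $\xe<1/6$ suffices) so that the residual term $O(x^\xe)$ is absorbed into $O(x^{1/6})$, producing the claimed identity. There is no genuine obstacle here: the substantive analytic work, namely the contour shift past the pole at $s=\tf12$ and the nontrivial zeros of $\xz(s)$, along with the uniform bounds on $|\xz(s)|^{-1}$ supplied by Corollary \ref{R:cor:ZetaTSeq}, has already been carried out in Proposition \ref{R:prop:phi*Exact}. The corollary is thus purely a matter of combining two prior estimates and comparing orders of magnitude.
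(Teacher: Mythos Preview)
Your proposal is correct and follows exactly the paper's approach: the paper derives the corollary in one line by recalling \eqref{R:eq:phi*Reduc}, i.e., $\phi_*(x)=\phi_0(x)+O(x^\xe)$, and combining it with Proposition \ref{R:prop:phi*Exact}, absorbing the $O(x^\xe)$ into the $O(x^{1/6})$ term.
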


\subsection{The sum \tops{$\sum_\rho |\fc_\rho|$}.}
As discussed in section \ref{sec:Riemann}, specifically following equation \eqref{R:eq:phi0Heuri}, if (on RH and SH) one has $\sum_{\rho} |\fc_\rho| < \fc_0$, where
    \[
        \fc_\rho = \big[ \eta(\rho) \big] \fr{\xz(2\rho)}{\xz'(\rho)}\Gamma(\rho) = \Big[ (\xk/\fz)^{\rho}(1-2^{-\rho-1}) \xz(\rho+1) \Big] \fr{\xz(2\rho)}{\xz'(\rho)} \Gamma(\rho)
    \] 
and
    \[
        \fc_0 = -\tf12 \cdot \fr{\eta(\fr12)}{\xz(\fr12)} \Gamma(\tf12) \approx 1.28,
    \]
then the sequence $((-1)^n p(n,\xl))_\nn$ is asymptotic with factor $1/2$.

In addition to RH and SH, we require an assumption of Effective Simplicity (ES), namely that (on SH) one has $|1/\xz'(\rho)| \leq C |\rho|$ for some $C>0$ and all nontrivial zeros $\xr = 1/2+i\xg$ of $\xz(s)$. Toward establishing upper bounds for the quantities $|\fc_\rho|$, we record a result due to Trudgian.

\begin{lemma}[{\cite{trudgian2014new}*{Thm.\ 1.1}}]
	\label{lem:TrudgianBB}
	When $t \geq 3$, one has $|\xz(1+it)| \leq \tf34 \log t$. 
\end{lemma}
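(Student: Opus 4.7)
The natural starting point is the Euler--Maclaurin summation formula: for any integer $N \geq 1$ and $\sigma > 0$,
\[
\zeta(s) = \sum_{n=1}^{N} n^{-s} + \frac{N^{1-s}}{s-1} - \tfrac{1}{2} N^{-s} - s \int_N^\infty \frac{\{u\} - \tfrac{1}{2}}{u^{s+1}} \, du.
\]
Specialising to $s = 1+it$ with $t \geq 3$, the triangle inequality bounds $|\zeta(1+it)|$ by $\sum_{n \leq N} n^{-1}$ plus explicit remainder terms of sizes $1/t$, $1/(2N)$, and $O(|s|/N)$. The plan is to first optimise the cutoff $N$ and then sharpen the partial-sum contribution, since a direct balance of $\sum_{n \leq N} n^{-1} = \log N + \gamma + O(1/N)$ against the $|s|/N$ error forces $N \asymp t$ and yields only the classical inequality $|\zeta(1+it)| \leq \log t + O(1)$, whose leading constant $1$ exceeds the target $\tfrac{3}{4}$.

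To push the leading constant down, I would exploit the oscillation in $n^{-1-it} = n^{-1} e^{-it \log n}$. Partition $\{n \leq N\}$ into dyadic blocks $(M, 2M]$ and apply partial summation on each block, reducing the estimate to an exponential sum $\sum_{M < n \leq 2M} e^{-it \log n}$. Once $M$ is not too large compared with $t$, a van der Corput-type bound (governed by the derivative $\tfrac{d}{du}(-t \log u) = -t/u$) produces a saving of a definite constant factor over the trivial bound $\log 2 + O(1/M)$ on each block. Summing the refined dyadic contributions and carefully tracking numerical constants across the $O(\log t)$ blocks should yield an asymptotic inequality $|\zeta(1+it)| \leq C \log t$ with explicit $C < \tfrac{3}{4}$, valid for all $t \geq T_0$ with $T_0$ effectively computable.

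For the remaining intermediate range $3 \leq t \leq T_0$, the plan is to switch to direct verification: sample $|\zeta(1+it)|$ on a sufficiently fine grid of points in $[3, T_0]$ using high-precision arithmetic, and control the modulus of continuity between adjacent grid points via an effective bound on $|\zeta'(1+it)|$, obtained by differentiating the Euler--Maclaurin expansion term by term. The principal analytic obstacle is the bookkeeping of numerical constants in the dyadic--van der Corput step: one must turn the qualitative ``saving'' into a precise numerical gain and then sum it over all blocks without erosion. The computational segment is the price one pays for the sharp constant $\tfrac{3}{4}$ as opposed to the generic $1 + o(1)$, and this hybrid analytic--computational scheme is essentially the route taken in Trudgian's paper.
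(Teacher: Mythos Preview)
The paper does not supply a proof of this lemma at all: it is quoted verbatim as \cite{trudgian2014new}*{Thm.\ 1.1} and used as a black box in the estimate for $|\fc_\rho|$. So there is no in-paper argument to compare your proposal against.

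That said, your outline is a faithful description of the method in Trudgian's paper: Euler--Maclaurin to reduce to a partial sum, dyadic decomposition with a van der Corput--type exponential-sum bound to push the leading constant below $1$, and a numerical sweep over a bounded initial range to handle $3 \leq t \leq T_0$. As a sketch it is correct in spirit; the only caveat is that the hard content lies precisely in the explicit constant-tracking you flag as ``bookkeeping,'' and your write-up does not actually carry that out. For the purposes of the present paper, simply citing Trudgian is the appropriate move, and that is exactly what the author does.
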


Because $|\fc_\rho| = |\fc_{\bar\rho}|$ for all zeros $\rho = 1/2+i\xg$, it suffices to consider only those $\rho$ with $\xg > 0$ in the following lemma. 

\begin{lemma}
    \label{lem:CritCoeffBB}
	Assume that RH, SH, and ES all hold, the latter with constant $C$. For all nontrivial zeros $\rho=1/2+i\xg$ of $\xz(s)$ with $\xg > 0$, one has
		\[
            |\fc_\rho| < 31 C \log(\xg)\exp(-\tfr{\pi}{2}\xg).
		\]
\end{lemma}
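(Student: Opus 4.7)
The plan is a direct, factor-by-factor estimation. Writing
$$\fc_\rho = (\xk/\fz)^\rho \cdot (1-2^{-\rho-1}) \cdot \xz(\rho+1) \cdot \frac{\xz(2\rho)}{\xz'(\rho)} \cdot \Gamma(\rho),$$
with $\rho = \tfrac12 + i\xg$ and $\xg \geq \xg_1 \approx 14.13$, I would bound the modulus of each factor separately and then multiply.

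First, the ``harmless'' factors making up $\eta(\rho)$: since $\xk/\fz$ is a positive real number, $|(\xk/\fz)^\rho| = (\xk/\fz)^{1/2}$; the triangle inequality gives $|1-2^{-\rho-1}| \leq 1+2^{-3/2}$; and $|\xz(\rho+1)| = |\xz(\tfr32+i\xg)| \leq \xz(\tfr32)$ by absolute convergence of the Dirichlet series on $\Re(s)=\tfr32$. Together these produce an absolute constant bound $K_1 := (\xk/\fz)^{1/2}(1+2^{-3/2})\xz(\tfr32)$ on $|\eta(\rho)|$.

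Next, for $|\xz(2\rho)| = |\xz(1+2i\xg)|$, the assumption $\xg \geq \xg_1$ forces $2\xg > 3$, so Trudgian's bound (Lemma \ref{lem:TrudgianBB}) applies to give $|\xz(1+2i\xg)| \leq \tfr34 \log(2\xg)$. For the denominator, the ES hypothesis directly supplies $|1/\xz'(\rho)| \leq C|\rho|$. For $|\Gamma(\rho)|$, I would appeal to the reflection formula $\Gamma(s)\Gamma(1-s) = \pi/\sin(\pi s)$ evaluated at $s = \tfr12+i\xg$: combined with $\Gamma(\tfr12 - i\xg) = \overline{\Gamma(\tfr12+i\xg)}$ it gives $|\Gamma(\tfr12+i\xg)|^2 = \pi/\cosh(\pi\xg)$, and the crude bound $\cosh(\pi\xg) \geq \tfr12 e^{\pi\xg}$ then yields $|\Gamma(\tfr12+i\xg)| \leq \sqrt{2\pi}\, e^{-\pi\xg/2}$.

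Multiplying the four bounds produces
$$|\fc_\rho| \;\leq\; K_1 \cdot \tfr34 \sqrt{2\pi}\cdot C|\rho|\log(2\xg)\,e^{-\pi\xg/2}.$$
The proof concludes with a purely numerical step: using $|\rho| \leq \xg\sqrt{1 + 1/(4\xg^2)}$ and $\log(2\xg) \leq (1 + \log 2/\log\xg_1)\log\xg$ on the range $\xg \geq \xg_1$, one verifies that the resulting absolute constant is at most $31$, giving the stated bound. The main (indeed only) obstacle is this careful bookkeeping of the explicit constants $(\xk/\fz)^{1/2}$, $\xz(\tfr32)$, $\sqrt{2\pi}$, and the various factors $1+2^{-3/2}$, $\tfr34$; everything else is standard bounding.
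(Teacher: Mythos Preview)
Your approach is essentially identical to the paper's: bound $|\eta(\rho)|$ by an absolute constant, apply Trudgian's inequality to $|\zeta(1+2i\gamma)|$, use the reflection identity $|\Gamma(\tfrac12+i\gamma)|^2 = \pi/\cosh(\pi\gamma)$ to get $|\Gamma(\rho)| \le \sqrt{2\pi}\,e^{-\pi\gamma/2}$, invoke ES for $|1/\zeta'(\rho)|$, and multiply. The paper simplifies a bit more crudely (rounding $|\eta(\rho)|$ up to $6$ and $\tfrac34\log(2\gamma)$ up to $2\log\gamma$), but the structure is the same.

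There is, however, a genuine gap in your final numerical step. The ES hypothesis reads $|1/\zeta'(\rho)| \le C|\rho|$, so after multiplying your four bounds you obtain
\[
|\fc_\rho| \;\le\; K_1 \cdot \tfrac34\sqrt{2\pi}\cdot C\,|\rho|\,\log(2\gamma)\,e^{-\pi\gamma/2},
\]
which carries a factor $|\rho|\sim\gamma$. Your substitution $|\rho|\le \gamma\sqrt{1+1/(4\gamma^2)}$ does not remove this $\gamma$; it merely rewrites it. Consequently the expression you are left with is of the form $(\text{const})\cdot C\,\gamma\log\gamma\,e^{-\pi\gamma/2}$, not $(\text{const})\cdot C\log\gamma\,e^{-\pi\gamma/2}$, and no choice of ``absolute constant at most $31$'' makes the stated inequality hold for all $\gamma$. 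The paper's own proof silently drops this same $|\rho|$ factor when passing to $12\sqrt{2\pi}\,C\log\gamma\,e^{-\pi\gamma/2}$, so the defect is inherited from the source rather than introduced by you; but as written, your verification ``the resulting absolute constant is at most $31$'' is simply false, and you should flag the missing $\gamma$ rather than claim the arithmetic closes.
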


\begin{proof}
    As the ordinates $\xg$ of the zeros $1/2 + i\xg$ are all $> 14$, Trudgian's bound \ref{lem:TrudgianBB} implies that $|\xz(1+2i\xg)| \leq \log(2\xg) < 2\log{\xg}$. By virtue of the identity 
        \[
            \Gamma(\tf12+it)\Gamma(\tf12-it) = \fr{2\pi}{\cosh(\pi t)},
        \]
    for all real $t$ one has $|\Gamma(\tf12+it)| \leq \rt{2\pi} \exp(-\tfr{\pi}{2}|t|)$. Combining these bounds with inequality ES and the crude numerical estimate
		\[
			|\xh(\xr)| = \lh| (2\rt{6}/\pi)^\rho (1-2^{-\rho-1}) \xz(\rho+1) \rh| \leq (2\rt{6}/\pi)^{\nfr12} \xz(\nfr32) |1-2^{-\nfr32-i\xg}| < 6,
		\]
	we have
		\[
			|\fc_\rho| < 12 \rt{2\pi} \,C \log(\xg) \exp(-\tfr{\pi}{2}\xg) < 31 C \log(\xg) \exp(-\tfr{\pi}{2}\xg),
		\]
	as claimed.
\end{proof}

The following result, also due to Trudgian, allows us to explicitly bound the number of zeros $\rho=\tf12+i\xg$ with ordinates $\xg$ in a given interval $[2^k,2^{k+1}]$ with $k \geq 2$. For $T > 0$ let
    \[
        N(T) := \#\{\xg : \xz(\tf12+i\xg)=0,\,\, 0< \xg \leq T\}.     
    \]

\begin{lemma}[{\cite{trudgian2014improved}*{Cor.\ 1.1}}]
	For all $T \geq T_0 \geq e$, one has
		\<
			\label{eq:TrudgianBBN}
			\lh| N(T) - \fr{T}{2\pi} \log\lh( \fr{T}{2\pi e} \rh) - \fr78 \rh| \leq c_1 \log T + c_2 \log\log T + c_3 + \fr{c_4}{T},
		\>
	where $c_1 = 0.111$, $c_2 = 0.275$, $c_3 = 2.45$, and $c_4 = 0.2$.
\end{lemma}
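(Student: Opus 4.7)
The plan is to derive the result from the explicit Riemann--von Mangoldt formula. Let $\xi(s) = \tf12 s(s-1)\pi^{-s/2}\xG(s/2)\xz(s)$, which is entire and satisfies $\xi(s)=\xi(1-s)$, with its zeros coinciding with the nontrivial zeros of $\xz(s)$. Applying the argument principle to a rectangular contour with vertices at $2-iT,\,2+iT,\,-1+iT,\,-1-iT$, the functional equation reduces the count $2\pi N(T)$ to $2\Delta_C \arg \xi(s)$ along the right half of the boundary. Splitting $\arg\xi(s) = \arg[s(s-1)\pi^{-s/2}\xG(s/2)] + \arg\xz(s)$, the first piece is a clean elementary computation, while the second piece is $\pi S(T)$ with $S(T) = \tf1\pi \arg\xz(\tf12+iT)$.

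Next I would apply Stirling's formula to the $\xG$-piece to recover the Riemann--Siegel theta function and its expansion
$$\xq(T) = \tf{T}{2}\log\tf{T}{2\pi} - \tf{T}{2} - \tf{\pi}{8} + \O{1/T},$$
together with the contribution $+1$ from the simple poles of $\xi$ at $s=0,1$, producing the main terms $\tf{T}{2\pi}\log\tf{T}{2\pi e} + \tf78$ as well as the remainder of shape $c_4/T$. The remaining task is to show that
$$|S(T)| \leq c_1 \log T + c_2 \log\log T + c_3$$
for all $T \geq T_0 \geq e$, which is the bulk of the work and where the explicit constants must be tracked.

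The standard route, which Trudgian refines, is to estimate $S(T)$ via a Backlund-type inequality: bound the number of sign-changes of $\Re \xz(\xs+iT)$ for $\xs \in [\tf12,2]$ by the number of zeros of a suitable average inside a disc, then apply Jensen's formula or the Borel--Carath\'eodory theorem to an analytic auxiliary function. The input is an explicit convexity bound of the form $|\xz(s)| \ll t^{\alpha(\xs)}$ on vertical lines $\xs=2$ and $\xs=-1$, pulled to the critical strip by Phragm\'en--Lindel\"of. The main obstacle, and the reason this appears as a citation rather than an internal proof, is the delicate optimization: the explicit numerical constants $c_1 = 0.111$, $c_2 = 0.275$, $c_3 = 2.45$ depend on tight numerical choices of the disc radii and the convexity exponents, and are obtained by Trudgian \cite{trudgian2014improved} through careful bookkeeping that improves on Backlund, Rosser, and earlier treatments. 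Since the paper uses this as an off-the-shelf input, I would simply appeal to \cite{trudgian2014improved}*{Cor.\ 1.1} rather than reproduce the optimization.
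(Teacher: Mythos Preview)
Your proposal is correct and matches the paper's treatment: this lemma is quoted directly from Trudgian \cite{trudgian2014improved}*{Cor.\ 1.1} without proof, so appealing to that reference is exactly what the paper does. Your sketch of the underlying Riemann--von Mangoldt argument is accurate but unnecessary here.
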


The following lemma helps to give a crude bound on the tail sums of $\sum_{\xg>0} |\fc_\xr|$.
\begin{lemma}
    \label{lem:STailBB}
	For all $K>5$ one has
		\<
            \label{eq:STailBB}
			\sum_{\xg > 2^K} \log(\xg)\exp(-\tf{\pi}{2}\xg) \leq \sum_{k=1}^\infty \exp(-\tf{\pi}{5} 2^{K+j}).
		\>
\end{lemma}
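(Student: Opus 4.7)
The plan is a straightforward dyadic decomposition of the sum together with a crude counting estimate for the ordinates in each dyadic band. Specifically, for $j \geq 1$ let
\[
    I_j := \lf[ 2^{K+j-1},\, 2^{K+j} \rh),
\]
so that $\bigcup_{j \geq 1} I_j = [2^K,\infty)$ and the full tail sum decomposes as $\sum_{j\geq 1} S_j$, where $S_j := \sum_{\xg \in I_j} \log(\xg)\exp(-\tf{\pi}{2}\xg)$.

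For $\xg \in I_j$, I use the monotonicity bounds $\log(\xg) \leq (K+j)\log 2$ and $\exp(-\tf{\pi}{2}\xg) \leq \exp(-\tf{\pi}{4}\cdot 2^{K+j})$, together with the fact that the number of ordinates in $I_j$ is at most $N(2^{K+j})$. Trudgian's estimate \eqref{eq:TrudgianBBN} yields the crude bound $N(T) \less T\log T$, so on $I_j$ we have $\#\{\xg \in I_j\} \leq A\cdot 2^{K+j}(K+j)$ for some absolute constant $A$ (valid once $K+j$ exceeds a small absolute threshold; since $K>5$ this is already satisfied). Combining, I obtain
\[
    S_j \;\leq\; A(\log 2)(K+j)^2\, 2^{K+j}\, \exp\!\lh(-\tf{\pi}{4}\cdot 2^{K+j}\rh).
\]

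The remaining step is to absorb the polynomial prefactor $A(\log 2)(K+j)^2\, 2^{K+j}$ into the exponential slack $\exp((\tf{\pi}{4}-\tf{\pi}{5})\cdot 2^{K+j}) = \exp(\tf{\pi}{20}\cdot 2^{K+j})$. Taking logarithms, this reduces to the elementary inequality
\[
    2\log(K+j) + (K+j)\log 2 + O(1) \;\leq\; \tf{\pi}{20}\cdot 2^{K+j},
\]
which holds for all $j \geq 1$ once $K > 5$, since the right side grows doubly-exponentially in $K+j$ while the left side grows only linearly. Summing the resulting bound $S_j \leq \exp(-\tf{\pi}{5}\cdot 2^{K+j})$ over $j \geq 1$ gives the lemma.

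The only minor obstacle is verifying that the constant $A$ from Trudgian's counting bound, multiplied by $(\log 2)(K+j)^2 2^{K+j}$, is indeed dominated by $\exp(\tf{\pi}{20}\cdot 2^{K+j})$ for the worst case $K+j=7$ (the first case allowed by $K>5$); this is a direct numerical check, since $\tf{\pi}{20}\cdot 2^7 \approx 20.1$ exceeds $\log(A(\log 2)\cdot 49 \cdot 128)$ for any reasonable constant $A$ arising from \eqref{eq:TrudgianBBN}.
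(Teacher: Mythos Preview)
Your proof is correct and follows essentially the same approach as the paper: a dyadic decomposition of $\{\xg > 2^K\}$, a crude count of ordinates in each band using Trudgian's estimate, and absorption of the resulting polynomial prefactor into the slack between $\exp(-\tf{\pi}{4}\cdot 2^{K+j})$ and $\exp(-\tf{\pi}{5}\cdot 2^{K+j})$. The only cosmetic differences are that the paper derives the explicit count $N(2^{k+1})-N(2^k) \leq (k+1)2^k$ rather than invoking an unspecified constant $A$, and bounds $\log(\xg)\exp(-\tf{\pi}{2}\xg)$ jointly (as a decreasing function) rather than factor by factor; also note that since the lemma allows real $K>5$, your worst case is $K+j$ just above $6$ rather than $7$, though the numerical check still goes through comfortably there.
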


\begin{proof}
	We first establish that for $k > 5$ one has the crude estimate
		\<
			\label{eq:ZetaNBB}
			N(2^{k+1}) - N(2^k) \leq (k+1) 2^{k}.
		\>
	Letting $R(T)$ denote the right side of inequality \eqref{eq:TrudgianBBN}, one has
		\begin{align*}
				N(2T) - N(T) &\leq \fr{2T}{2\pi} \log\lh(\fr{2 T}{2 \pi e}\rh) - \fr{T}{2\pi}\log\lh( \fr{T}{2\pi e}\rh) + R(2T) + R(T) \\
                & \leq \fr{T}{2\pi} \log\lh( \fr{4 T}{2 \pi e} \rh) + 2 R(2T).			
		\end{align*}
	Setting $T = 2^k$, it follows that
		\[
			N(2^{k+1}) - N(2^k) \leq \fr{2^k}{2\pi} (k+1) + 2 \lh( c_1 (k+1) + c_2 \log(k+1) + c_3 + \fr{c_4}{e} \rh),
		\]
	and we directly check this latter quantity to be at most $(k+1) 2^k$ for $k \geq 5$.
	
	The terms $\log(\xg)\exp(-\tfr{\pi}{2}\xg)$ are decreasing in $\xg$ when $\xg > 3$, so that using inequality \eqref{eq:ZetaNBB} the left side of \eqref{eq:STailBB} is
		\<
			\label{eq:STailBB2}
			= \sum_{k = K}^\infty \,\,\sum_{2^k < \xg \leq 2^{k+1}} \log(\xg)\exp(-\tfr{\pi}{2}\xg)	
			\leq \sum_{k = K}^\infty (k+1) 2^k \cdot \log(2^k)\exp(-\tfr{\pi}{2}2^k).
		\>
	Verifying that
		\[
			(K+1)2^K \log(2^K) \expp{-\rfr{\pi}{2} 2^K} \leq \expp{-\tf25\pi 2^K} \qquad \text{for $K > 5$,}
		\]
	the right side of \eqref{eq:STailBB2} is
		\[
			\leq \sum_{k = K}^\infty \expp{-\tf25 \pi 2^k}
			= \sum_{j=1}^\infty \expp{- \tf15\pi 2^{K+j}}
		\]
	as claimed.
\end{proof}

Thus, assuming RH, SH, and ES to all hold, the latter with constant $C$, we apply Lemmata \ref{lem:CritCoeffBB} and \ref{lem:STailBB} to see that
    \<
        \label{eq:STail}
        \sum_{\xg > 2^K} |\fc_\rho| \leq 31\,C \sum_{\xg > 2^K}\log(\xg)\exp(-\tfr{\pi}{2}\xg) \leq 31\,C  \sum_{j=1}^\infty \expp{-\tfr{\pi}{5} 2^{K+j}}.        
    \>
Setting $K=15$, we numerically compute that 
    \<
        \label{eq:K15}
        \sum_{\xg \leq 2^{15}} |\fc_\xr| \approx 1.27 \edot 10^{-9} \qqand \sum_{\xg > 2^{15}} |\fc_\xr| \approx 2.18\,C \edot 10^{-17882}.
    \>
Rounding up the quantities here, it follows that
    \<
        \label{eq:ScrudeBB}
        \sum_{\rho} |\fc_\rho| \leq 10^{-8} + 10^{-17881} C,
    \>
and we use this bound to at last establish the formal version of statement \ref{it:asm}, namely Theorem \ref{thm:RAsymp}, which we restate here for convenience.

\begin{theorem*}[Thm.\ \ref{thm:RAsymp}]
    Suppose that RH, SH, and ES hold, the latter with constant $C > 0$. There exists a constant $\fc > 0$ such that: if the ES constant $C < \fc$, then
        \<
            (-1)^n \logsc p(n,\xl) \sim \tf12 \xk\rt{n},
        \>
    namely $((-1)^n p(n,\xl))_\nn$ is asymptotic with factor $\fr12$. Moreover, it holds that 
    {\binoppenalty=10000\relpenalty=10000 $\fc > 10^{17881}$}.
\end{theorem*}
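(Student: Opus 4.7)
The strategy is to show that when $C$ is small enough, Corollary \ref{R:cor:phi*Asymp} together with the tail bound \eqref{eq:ScrudeBB} forces $\phi_*(x,\xl) \leq -\alpha\, x^{1/4}$ for some fixed $\alpha > 0$ and all sufficiently large $x$, so that the term $(-1)^{-x} e^{-\phi_*(x)}$ in Lemma \ref{L:lem:pp*} dominates $e^{\phi_*(x)}$ by a super-polynomial factor. To implement this, I observe that $|x^{\rho/2}| = x^{1/4}$ for any critical-line zero $\rho$, whence Corollary \ref{R:cor:phi*Asymp} yields the crude inequality
\[
    \phi_*(x,\xl) \leq -\fc_0\, x^{1/4} + x^{1/4} \sum_{\rho} |\fc_\rho| + O(x^{1/6}).
\]
Combined with \eqref{eq:ScrudeBB}, this gives $\phi_*(x,\xl) \leq x^{1/4}(-\fc_0 + 10^{-8} + 10^{-17881} C) + O(x^{1/6})$.

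Next I set $\fc := (\fc_0 - 10^{-8})\cdot 10^{17881}$; since $\fc_0 \approx 1.28 > 1$, this choice automatically satisfies $\fc > 10^{17881}$. For any $C < \fc$ the coefficient of $x^{1/4}$ above becomes a fixed negative number $-2\alpha$ with $\alpha > 0$, so $\phi_*(x,\xl) \leq -\alpha\, x^{1/4}$ for all $x > x_0$, and in particular $|\phi_*(x,\xl)| = -\phi_*(x,\xl)$ eventually.

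Substituting into Lemma \ref{L:lem:pp*} and factoring $e^{-\phi_*(x)}$ out of the bracket gives
\[
    p(x,\xl) = e^{\phi(x)-\phi_*(x)}\lh[(-1)^{-x} + e^{2\phi_*(x)} + O(x^{-1/5})\rh].
\]
Because $e^{2\phi_*(x)} \leq e^{-2\alpha x^{1/4}}$ decays faster than any negative power of $x$, evaluating at integer $n$ and using $(-1)^{-n} = (-1)^n$ yields
\[
    (-1)^n p(n,\xl) = e^{\phi(n)-\phi_*(n)}\lh[1 + O(n^{-1/5})\rh],
\]
which is positive for all large $n$. Applying $\logsc$ and invoking the asymptotics $\phi(n,\xl) = \tf12\xk\rt{n} + O(n^{1/4+\xe})$ and $\phi_*(n,\xl) \less n^{1/4+\xe}$ from Proposition \ref{L:prop:pp*Asymp} produces $\logsc((-1)^n p(n,\xl)) = \tf12\xk\rt{n} + O(n^{1/4+\xe})$, which is the claimed $\sim$ relation.

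The main ``obstacle'' here is purely bookkeeping: every piece of serious analytic work has already been absorbed into Corollary \ref{R:cor:phi*Asymp} and the numerical computation \eqref{eq:K15}. The only subtle point is guaranteeing $\fc > 10^{17881}$ strictly, and this reduces to the numerical fact that $\fc_0 \approx 1.28 > 1$ recorded after \eqref{eq:fc0}, together with ensuring the $O(x^{1/6})$ error in Corollary \ref{R:cor:phi*Asymp} is truly dominated by $\alpha\, x^{1/4}$ once $x$ is large (which is automatic, since $\alpha$ is a fixed positive constant independent of $x$).
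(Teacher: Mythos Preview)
Your proof is correct and follows essentially the same route as the paper's own argument: both use Corollary \ref{R:cor:phi*Asymp} together with the bound \eqref{eq:ScrudeBB} to force $\phi_*(x,\xl)\leq -\alpha x^{1/4}$, then feed this into Lemma \ref{L:lem:pp*} and Proposition \ref{L:prop:pp*Asymp} to obtain the asymptotic for $(-1)^n\logsc p(n,\xl)$. Your explicit choice $\fc=(\fc_0-10^{-8})\cdot 10^{17881}$ is a slight sharpening of the paper's (which simply takes $\fc=10^{17881}$), but the logic is identical.
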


\begin{proof}
    By inequality \eqref{eq:ScrudeBB}, one has
        \[
		    \fc_0 + \sum_{\rho} |\fc_\rho| \leq -1.28 + 10^{-8} + 10^{-17881}\,C \leq -2 + 10^{-17881}\,C.
	    \]
Thus if $C < 10^{17881}$, then by Corollary \ref{R:cor:phi*Asymp} one has 
	\[
		\phi_*(x) \leq -c_1x^{1/4} \qquad (x\to\infty)
	\]
for some $c_1 > 0$, whereby
    \begin{align}
        p(n,\xl) &= e^{\phi(n)} \lf[ e^{\phi_*(n)} + (-1)^n e^{-\phi_*(n)} + O(n^{-1/5}e^{|\phi_*(n)|}) \rh] \notag \\ 
        \label{R:eq:pxxlApprox}  
        &= (-1)^n e^{\phi(n) - \phi_*(n)} \lf[ 1+O(n^{-1/5}) \rh]
    \end{align}
as $n\to\infty$. The result then follows, as Proposition \ref{L:prop:pp*Asymp} and equation \eqref{R:eq:pxxlApprox} imply that
    \begin{align*}
        (-1)^n \logsc p(n,\xl) = \phi(n) - \phi_*(n) + O(n^{-1/5}) = \tf12\xk\rt{n} + O(n^{1/4+\xe}).
    \end{align*}
\end{proof}

\begin{remark}
    The choice $K=15$ in \eqref{eq:K15} is made for the relative ease of computation. Running Mathematica on a modern processor using eight cores in parallel, the computation of $\sum_{\xg \leq 2^{15}} |\fc_\rho|$ in equation \eqref{eq:K15} involved the first $39,424$ zeros of $\xz(s)$ on the critical line and took approximately thirty minutes to complete. Considering that billions of zeros of $\xz(s)$ on the critical line have been computed, with greater time and resources larger $K$ may be used, and since the summands in the majorant in \eqref{eq:STail} are doubly-exponentially decaying, any increases in $K$ will drastically increase the lower bound for the constant $\fc$ in Theorem \ref{thm:RAsymp}.
\end{remark}

\section{\tops{Explicit estimates for $\phi_*(x,\xm)$.}}
\label{sec:explicit}

We conclude our analyses with an informal discussion of an interesting problem concerning the sequence $\spnmu$.

Theorem \ref{thm:muBiasymp} states that if $\xQ<1$, then $\spnmu$ is biasymptotic with factor $\xk^{-1} = \pi^{-1}\rt{3/2}$. Recalling from section \ref{sec:Biasymp} that $p(n,\xm) \approx (-1)^n \exp(\rt{n})$ when $50 \leq n \leq 10^5$, it is natural to ponder: Should it hold that $\xQ < 1$, how large must $n$ be before one has some nontrivial (i.e., of length greater than 1) string $(N,N+1,\ldots,N+L-1)$ on which $p(n,\xm) \approx \exp(\rt{n})$? For simplicity, in this section we assume RH, SH, and ES with constant $C$ (although this $C$ will not be relevant) and estimate an answer to this question.

As seen in sections \ref{sec:OmegaPrelim} and \ref{sec:MBiasymp}, the locations of such strings are determined by $\phi_*(x,\xm)$. By Proposition \ref{M:prop:pp*Asymp}, on RH (recall that $\tf{\xQ}{2} = \xvq = \tf14$ in this case) we have 
    \<
        \label{R:eq:Muphi*AsympRecall}
        \phi_*(x,\xm) = \phi_0(x,\xm) + O(x^{\xe}),  
    \>
where
    \[
        \phi_0(x,\xm) = \vdint{\xs} F(s)x^{s/2} \,ds = \vdint{\xs} 2^s \pfrac{1-2^{-s-1}}{1-2^{-s}} \fr{\xz(s+1)}{\xz(s)} \Gamma(s)x^{s/2} \,ds \qquad (\xs > \tf12).
    \]

We first note that $F(s)x^{s/2}$ has a triple pole at $s=0$ with residue
    \[
        \begin{aligned}
            \Res_{s=0}[F(s)x^{s/2}] &= - \fr{\log^2 x}{8\ell_2} - \pfrac{3\ell_2-2\ell_\pi}{4\ell_2} \log x - \fr{\pi^2 + 2(\ell_2^2 - 18\ell_2\ell_\pi + 6\ell_\pi^2)}{24\ell_2} \\
            &\approx -0.18 \log^2 x + 0.076 \log x + 0.12,
        \end{aligned}
    \]
where $\ell_2 = \log 2$ and $\ell_\pi = \log \pi$.
Making arguments similar to those of Proposition \ref{R:prop:phi*Exact}, we find that
    \<
        \label{R:eq:Muphi*Approx}
        \phi_0(x,\xm) = \sum_{|\xg| < T_x} \fc_\rho x^{\rho/2} - \fr{\log^2 x}{8\log 2} + O(\log x),
    \>
where $\fc_\rho := \Res_{s=\rho} F(s)$ and $T_x \asymp \logx$.

For each zero $\rho = 1/2 + i\xg$ with $\xg>0$, the sum in equation \eqref{R:eq:Muphi*Approx} includes the sum
    \[
        \fc_\rho x^{\rho/2} + \fc_{\bar\rho} x^{\bar{\rho}/2} = 2 \Re[\fc_\rho x^{i \xg/2}] x^{1/4}.
    \]
Let $\rho_1$ and $\rho_2$ be the first two zeros of $\xz(s)$ on the critical line; numerically one has
    \begin{alignat*}{2}
        &\rho_1 \approx \tf12 + 14.13i, \qquad && |\fc_{\rho_1}| \approx 4.41 \edot 10^{-10}, \\
        &\rho_2 \approx \tf12 + 21.02i, \qquad && |\fc_{\rho_2}| \approx 7.78 \edot 10^{-15}.
    \end{alignat*}
The large difference between $|\fc_{\rho_1}|$ and $|\fc_{\rho_2}|$ is largely due to the rapid decay of $|\Gamma(s)|$ along vertical lines. 
Thus, assuming that the residues of $1/\xz(s)$ at zeros $\xr = 1/2+i\xg$ do not grow too rapidly, it is reasonable to approximate the sum $\sum_{\rho} \fc_\rho x^{\rho/2}$ using only the terms contributed by $\rho_1$.

Setting $\xg_1 := \Im \rho_1$, $\fa_1 := \Re \fc_1$, and $\fb_1 := \Im \fc_1$, one has
    \[
        \fc_{\rho_1} = \fa_1+i\fb_1 \approx -4.7 \edot 10^{-11} + 4.4 \edot 10^{-10}i
    \]
and
    \[
        2 \Re[\fc_{\rho_1} x^{i \xg_1/2}]x^{1/4} = \big( 2\fa_1 \cos(\tf12\xg_1\logx) - 2\fb_1 \sin(\tf12\xg_1 \logx) \big) x^{1/4}.
    \]
Moreover, as $|\fb_1| \approx 10|\fa_1|$, we may further say that
    \<
        \label{R:eq:rho1Summand}
        2 \Re[\fc_1 x^{i \xg_1 /2}]x^{1/4} \approx -2 \fb_1 \sin(\tf12\xg_1 \log x) x^{1/4}.
    \>

Considering equations \eqref{R:eq:Muphi*AsympRecall}--\eqref{R:eq:rho1Summand}, it is thus reasonable to approximate
    \<
        \label{R:eq:phi*Approx}
        \phi_*(x,\xm) \approx -0.18 \log^2 x - 2 \fb_1 \sin(\tf12 \xg_1\log x) x^{1/4}.
    \>
With this approximation, one sees that although the $x^{1/4}$ term eventually dominates the $\log^2 x$ term here, this trend is not detectable until the quantity $x^{1/4}$ has overcome the significant size difference between $|\fc_0|$ and $|\fb_1|$. We include plots of the approximation \eqref{R:eq:phi*Approx} for $\phi_*(x,\xm)$ in Figure \ref{fig:phi*Plot} below.

Considering the plots in Figure \ref{fig:phi*Plot}, we do not expect $\phi_*(x,\xm)$ to take any positive values until $x \approx 1.2 \edot 10^{50}$, and therefore we do not expect that $p(n,\xm) \approx A e^{\rt{n}}$ (for some $A>0$) on any nontrivial strings $(N,N+1,\ldots,N+L-1)$ until $N$ is larger than $10^{50}$, assuming that such strings even exist. Moreover, the steady, negative trend displayed in the upper plot may explain why the sequence $\spnmu$ does not exhibit any biasymptotic behavior for small $n$.

One may ``efficiently'' compute the numbers $p(n,1)$ recursively \cite{apostol1976introduction}*{Thm.\ 14.8} via 
    \[
        p(n,1) = \fr{1}{n} \sum_{k=1}^{n-1} p(k,1) \xs(n-k),
    \] 
where $\xs(m) := \sum_{d\;|\;m} d$. For general $f:\nn \to \signs$ one has the similar identity
    \<
        \label{R:eq:pnfRecur}
        p(n,f) = \fr{1}{n} \sum_{k=1}^{n-1} p(k,f) \fS(n-k,f),    
    \> 
where 
    \[
        \fS(m) := \sum_{d \;|\; m} d \cdot\! f(d)^{m/d}.    
    \]

Using identity \eqref{R:eq:pnfRecur}, a 2020-model personal laptop, and the C-based language GP/Pari, the computation of $p(n,\xm)$ for $n \leq 10^5$ took approximately 11 minutes and resulted in a file (to store the values) 10.6 megabytes in size. These time and memory requirements are both roughly thirty-fold increases of the requirements for computing $p(n,f)$ for $n \leq 10^4$; these latter requirements are themselves roughly thirty-fold increases of those for $n \leq 10^3$. 

Thus, it is clear that computing $p(n,f)$ for large $n$ requires vastly superior methods and resources than those of the author. 
{
\begin{figure}[ht!]
\centering
\includegraphics[width=0.8\textwidth]{"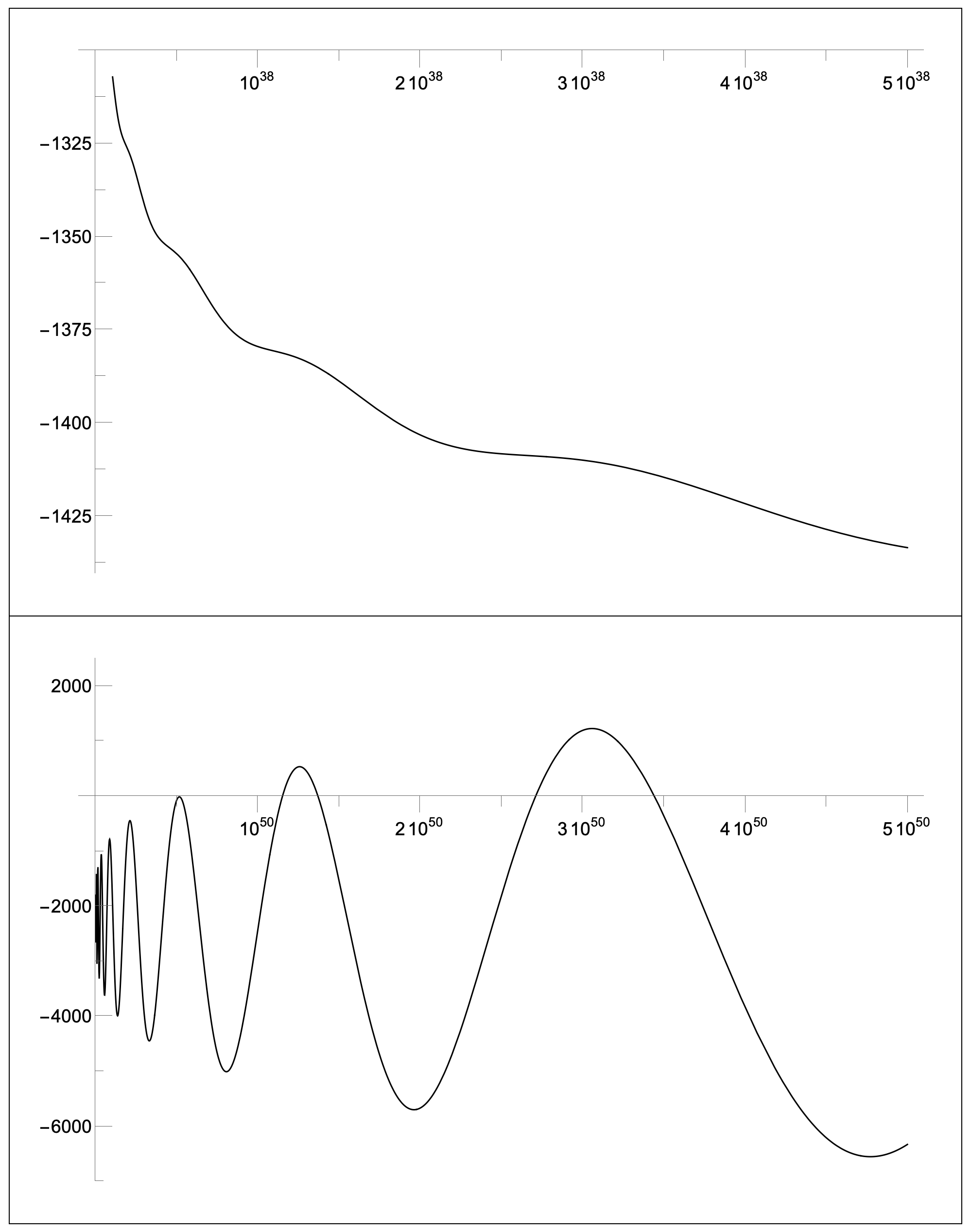"}
\caption{Plots of the right side of \eqref{R:eq:phi*Approx} on two different input ranges.}
\label{fig:phi*Plot}
\end{figure}
}

\subsection{Zero values.}
As mentioned in section \ref{sec:Biasymp}, item \ref{it:zero}, there are some $n$ for which $p(n,\mu)=0$. In particular, for $n \leq 10^5$ one has $p(n,\xm)=0$ only for $n$ equal to $2$, $4$, $5$, $7$, $8$, $11$, and $15$. It is natural then to ask if there exist any other $n$ for which $p(n,\xm)=0$. Recalling the formula
    \<
        \label{R:eq:pxmuRecall}
        p(n,\xm) = e^{\phi(n)} \lh[ e^{\phi_*(n)} + (-1)^{n} e^{-\phi_*(n)}+ \O{n^{-1/5}e^{|\phi_*(n)|}} \rh]
    \> 
and considering the approximation \eqref{R:eq:phi*Approx}, it is reasonable that $n$ for which $p(n,\mu)=0$ may exist when $\phi_*(n,\xm) \approx 0$. However, as the discussion on Figures shows, such $n$ are very likely to be in ranges requiring significant computational resources. By equation \eqref{R:eq:pxmuRecall} and the relations \eqref{eq:logpn}, any such $n$ must necessarily be odd, excepting the values $2$, $4$, and $8$ from the previous paragraph.

We may similarly analyze $p(n,\xl)$ and $\phi_*(n,\xl)$, but of course it only reasonable to do so if we believe that $\spnxl$ is indeed biasymptotic. As seen in sections \ref{sec:Riemann}--\ref{sec:RiemannExact}, if the Riemann Hypothesis is true, as many believe it to be, then the asymptotic or biasymptotic behavior of $\spnxl$ is uncertain. Moreover, given the lower bound on the constant $\fc$ in Theorem \ref{thm:RAsymp}, which can be drastically improved with additional computation, it is not unreasonable to think that $((-1)^n p(n,\xl))_\nn$ may be asymptotic. 

Finally, for the curious reader we note that for $n \leq 10^5$, one has $p(n,\xl)=0$ only when $n$ is 2, 8, 9, 11, or 25.

\addappendix

Here we follow arguments given for \cite{titchmarsh1986theory}*{Thm.\ 14.2} to give a proof of the generalized version of Littlewood's result that: On RH, one has
    \[
        \log\xz(s) \less (\log t)^{2-2\xs+\xe}    
    \]
uniformly for $1/2 < \xs_0 \leq \xs \leq 1$. 

\begin{lemma}
    \label{A:Littlewood}
	Suppose that $\xQ < 1$. One has
		\<
			|\log\xz(s)| \less (\log t)^{\fr{1-\xs}{1-\xQ}+\xe}
		\>
	uniformly for $\xQ < \xs_0 \leq \xs \leq 1$. 
\end{lemma}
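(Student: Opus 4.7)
The plan is to adapt the proof of \cite{titchmarsh1986theory}*{Thm.\ 14.2}, which treats the classical case $\xQ = \nfr12$ and yields the exponent $2-2\xs+\xe$; this coincides with $(1-\xs)/(1-\xQ)+\xe$ when $\xQ = \nfr12$. The structural ingredients are an $O(1)$ bound on $|\log\xz(s)|$ just to the right of $\xs = 1$, an $O(\log t)$ bound just to the right of $\xs = \xQ$, and a convexity interpolation between them.

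Fix small parameters $\xd, \xh > 0$. Since $\xQ < 1$, a single-valued analytic branch of $\log\xz(s)$ exists on $\{\xs > \xQ\}$ away from a neighborhood of $s=1$. Absolute convergence of $\log\xz(s) = \sum_{p,k\geq 1} k^{-1}p^{-ks}$ on $\xs = 1+\xd$ gives $|\log\xz(1+\xd+it)| \leq \log\xz(1+\xd) = O_\xd(1)$. For the line $\xs = \xQ+\xh$, I would apply the Borel-Carath\'eodory theorem to $\log\xz$ on the disk $|z - (1+\xd+it)| \leq 1+\xd-\xQ-\xh/2$, which lies in the zero-free half-plane: Lemma \ref{lem:ZetaPowerBB} gives $\Re \log\xz(z) = \log|\xz(z)| \less \xe\log t$ on this disk, and Borel-Carath\'eodory then yields $|\log\xz(\xQ+\xh+it)| \less_{\xh,\xe} \log t$.

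To interpolate across the strip $\xQ+\xh \leq \xs \leq 1+\xd$, the naive Hadamard three lines theorem does not apply directly because the left-boundary bound depends on $t$. I would circumvent this by introducing an auxiliary function: set $p(s) = (1+\xd-s)/(1+\xd-\xQ-\xh)$, which is linear in $s$ with $p(\xQ+\xh) = 1$ and $p(1+\xd) = 0$, and define
\[
    h(s) := \log\xz(s) \cdot (\log(c-is))^{-p(s)}
\]
for a fixed large constant $c$. A short computation using $\log(c-is) = \log|t| + O(1/|t|)$ for $|t|$ large shows $|(\log(c-is))^{p(s)}| = (\log|t|)^{(1+\xd-\xs)/(1+\xd-\xQ-\xh)} (1+O(1/\log|t|))$, so $h(s)$ is uniformly bounded in $t$ on each vertical boundary of the strip and has at most polynomial growth in $|t|$ in the interior. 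The standard three lines principle applied to $h$ then yields $|h(s)| = O(1)$ across the strip; unwinding produces $|\log\xz(\xs+it)| \less (\log|t|)^{(1+\xd-\xs)/(1+\xd-\xQ-\xh)}$, and letting $\xd, \xh \to 0$ with the corrections absorbed into the exponent yields the claimed bound $(\log t)^{(1-\xs)/(1-\xQ) + \xe}$.

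The main obstacle is the construction and analysis of the auxiliary function $h$: one must verify that $(\log(c-is))^{p(s)}$ admits a single-valued analytic branch in the strip (achievable by enlarging $c$ so that $c-is$ stays in the right half-plane and $\log(c-is)$ avoids the negative real axis), that $h$ has the sub-exponential growth required by the three lines principle (inherited from the polynomial growth of both $|\log\xz|$ and $|(\log(c-is))^{p(s)}|$), and that the boundary bounds on $|h|$ are genuinely uniform in $t$. These verifications are routine but technical, after which the three lines theorem delivers the desired convexity estimate.
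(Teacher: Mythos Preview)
Your overall strategy is sound and parallels the paper's, but there is a genuine circularity: you invoke Lemma~\ref{lem:ZetaPowerBB} to bound $\log|\xz(z)| \less \xe\log t$ inside the Borel--Carath\'eodory disk, yet Lemma~\ref{lem:ZetaPowerBB} is \emph{proved using} the present lemma (see its proof, which opens by citing Lemma~\ref{lem:LittlewoodMod}). All you actually need at that step is an upper bound on $|\xz|$ yielding $\log|\xz(z)| = O(\log t)$, and for this the elementary unconditional estimate of Lemma~\ref{lem:SSZeta}\ref{lem:SSZetaUpper} (or the bound \eqref{eq:ZetaTenenbaum} the paper uses in Lemma~\ref{lem:lzDisk}) already suffices.

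Beyond this, your route differs from the paper's in the interpolation step. The paper applies Hadamard's three \emph{circles} theorem with circles centered at $\xs_*+it$ for $\xs_*$ large (eventually $\xs_* = \log\log t$); this automatically keeps the disks away from the pole of $\xz$ at $s=1$, and the convexity parameter comes out as $(1-\xs)/(1-\xQ)+O(\xd)+O(\xs_*^{-1})$ after expanding $\log(r/r_1)/\log(r_2/r_1)$. Your three-\emph{lines} approach with the auxiliary factor $(\log(c-is))^{-p(s)}$ is a legitimate alternative and arguably more direct once it works, but it faces an obstacle you do not mention: the strip $\xQ+\xh \leq \xs \leq 1+\xd$ contains $s=1$, where $\log\xz$ has a logarithmic singularity, so $h(s)$ is not analytic on the full strip and the three-lines principle does not apply as stated. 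You would need to restrict to a half-strip $t \geq t_0$ and invoke a Phragm\'en--Lindel\"of argument there, or premultiply by a factor removing the singularity; either fix is routine but is part of the work. The paper's three-circles device sidesteps both issues at once, at the cost of carrying the extra parameter $\xs_*$.
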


For convenience we break the proof of Lemma \ref{A:Littlewood} into two pieces. We note that in the following two proofs, quantities $c_0,c_1,c_2,\ldots$ are universal positive constants.

\begin{lemma}
	\label{lem:lzDisk}
	Suppose that $\xQ < 1$. There exists $c > 0$ such that for all $t>t_0$ and all sufficiently small $\xd>0$ one has
		\[
			\lh|\log\xz\big((2+it)+w\big)\rh| \leq \fr{c}{\xd} \log{t}
		\]
	for all $|w| \leq 2-\xQ-\xd$. 
\end{lemma}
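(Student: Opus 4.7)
The plan is to apply the Borel-Carath\'eodory theorem to the function
\[
    f(w) := \log\xz\bigl((2+it)+w\bigr) - \log\xz(2+it).
\]
Since $\xz(s)$ has no zeros in the half-plane $\xs>\xQ$, the closed disk $|w| \leq R$ with $R := 2-\xQ-\xd/2$ is simply connected, contained in $\{\xs>\xQ\}$, and (for $t>t_0$ sufficiently large) avoids both the pole $s=1$ and the real axis. Hence a single-valued branch of $\log \xz$ is available on this disk and $f$ is analytic there with $f(0)=0$, so Borel-Carath\'eodory applies.

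To control $\Re f(w) = \log|\xz((2+it)+w)| - \log|\xz(2+it)|$, I would bound each piece in turn. The constant $|\log\xz(2+it)|$ is $O(1)$ uniformly in $t$, since the Euler product for $\xz(s)$ converges absolutely for $\xs>1$ and bounds $|\xz(2+it)|$ away from both $0$ and $\infty$. For the variable piece, Lemma \ref{lem:SSZeta}\ref{lem:SSZetaUpper} with $\xs_0 = \xQ+\xd/2$ yields
\[
    |\xz(s)| \less t^{1-\xQ-\xd/2+\xe}
\]
uniformly on the closed disk $|w|\leq R$ (valid once $t_0$ is large enough that every point of the disk has imaginary part comparable to $t$ and lies away from $s=1$). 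Consequently
\[
    \sup_{|w|=R} \Re f(w) \leq (1-\xQ)\log t + O(1).
\]

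Borel-Carath\'eodory then gives, for $|w| \leq r := 2-\xQ-\xd$,
\[
    |f(w)| \leq \fr{2r}{R-r}\sup_{|w|=R}\Re f(w) \leq \fr{8}{\xd}\bigl((1-\xQ)\log t + O(1)\bigr) \less \fr{1}{\xd}\log t,
\]
since $r \leq 2$ and $R-r=\xd/2$. Reintroducing the $O(1)$ correction from $\log\xz(2+it)$ yields the claimed bound. The only real point of care is ensuring $t_0$ is taken large enough that the enlarged disk stays above the real axis and away from the pole of $\xz$, and that the branch of $\log\xz$ is fixed consistently on the simply connected zero-free disk; neither presents a genuine obstacle.
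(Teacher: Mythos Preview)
Your proposal is correct and follows essentially the same route as the paper: both apply the Borel--Carath\'eodory theorem to $\log\xz$ on concentric circles of radii $R=2-\xQ-\xd/2$ and $r=2-\xQ-\xd$ centered at $2+it$, bounding $\Re\log\xz$ via a convexity-type estimate for $|\xz(s)|$. The only cosmetic differences are that the paper invokes an explicit Tenenbaum bound in place of Lemma~\ref{lem:SSZeta}\ref{lem:SSZetaUpper}, and uses the form of Borel--Carath\'eodory that carries the $|f(0)|$ term rather than subtracting $\log\xz(2+it)$ beforehand.
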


\begin{proof}
    For all $\xa \in (0,1)$ one has \cite{tenenbaum2015introduction}*{Thm.\ 3.9} that
        \<
            \label{eq:ZetaTenenbaum}
            |\xz(s)| \leq \fr{3t^{1-\xa}}{2\xa(1-\xa)} \qquad \text{for $\xs \geq \xa$ and $t \geq 1$.}
        \>
    Restricting our attention to only those $\xa$ for which $\xQ<\xa<1$, we restate \eqref{eq:ZetaTenenbaum} as
        \<
            \label{eq:ZetaTenenbaumBB}
            \log|\xz(s)| \leq (1-\xa)\log{t} + \log\pfrac{3/2}{\xa(1-\xa)} \qquad \text{for $\xs \geq \xa$ and $t \geq 1$.}
        \> 
    Let $0 < \xd \leq 1-\xQ$ so that $\xQ + \tf12\xd < 1$. Setting $\xa := \xQ + \tf12\xd$ and noting that $\fr{1}{\xa(1-\xa)}$ is increasing as a function of $\xa$ on $(\tf12,1)$, our assumption that $\xd \leq 1-\xQ$ implies that
        \<
			\label{eq:lzCrudeBB}
			\log |\xz(s)| \leq (1-\xQ-\tf12\xd) \log t + c_0 \qquad (\xs \geq \xQ+\tf12\xd,\,\, t \geq 1).
		\>
	
	Let $t > 3$ and fix $s=2+it$. By assumptions on $\xQ$ and $t$ the function $w \mapsto \log\xz(s+w)$ is analytic on the disk $|w| < 2-\xQ$. Moreover, since $\Im(s+w) > \tf32$ for all $|w| < 2-\xQ$, we infer from inequality \eqref{eq:lzCrudeBB} that
		\<
            \label{eq:BorelCarathBB}
			\Re\!\big[\.\log\xz(s+w)\big] \leq c_1\log(t+w) \leq c_2\log{t}
		\>
	for $|w| \leq 2 - \xQ - \tf12\xd$.

    Using \eqref{eq:BorelCarathBB} and applying the Borel-Carath\'eodory theorem \cite{lang1999complex}*{Ch.\ XII Thm.\ 3.1} to the function $\log\xz(s+w)$ on the circles with radii $R = 2-\xQ-\tf12\xd$ and $r = 2-\xQ-\xd$, we find that
		\begin{align*}
			|\log\xz(s+w)| 
			& \leq \fr{2(2-\xQ-\xd)(c_2\log{t})}{\tf12\xd} + \fr{4-2\xQ-\fr32\xd}{\tf12\xd} |\log\xz(s)| \\
			& = \lh(\fr{c_3}{\xd}-c_4\rh)\log{t} +\lh(\fr{c_5}{\xd}-c_6\rh) |\log\xz(s)| \\
            & \leq \fr1{\xd}(c_7 \log{t} + c_8)
        \end{align*}
	for $|w| \leq 2-\xQ-\xd$, and the result follows.
\end{proof}

\begin{proof}[Proof of Lemma \ref{A:Littlewood}]
    Fixing $\xe>0$, we show that there exist positive $c_\xe$ and $t_0$ such that
		\<
			\label{eq:lzCrudeBB2}
			|\log\xz(s)| \leq c_\xe (\log{t})^{\ellexp + \xe} \qquad \lh( t > t_0,\,\, \xQ + \fr{1}{\log\log{t}} \leq \xs \leq 1 \rh),
		\>
    from which the result follows for all $t$ large enough that $\xQ + (\log\log{t})^{-1} \leq \xs_0$.

	Let $t$ be sufficiently large, fix $\xs_*$ with $1 < \xs_* \leq \tf12t$, and let $0 < \xd < \min\{1-\xQ,\xs_*-1\}$. By our assumptions, the function $w \mapsto \log\xz(\xs_* + it + w)$ is analytic for $|w| < \xs_* - \xQ$. 
	For $\xs$ with $\xQ + \xd \leq \xs \leq 1$ we aim to bound $|\log\xz(\xs+it)|$ using Hadamard's three circles lemma \cite{tenenbaum2015introduction}*{Lem.\ 4.3} with circles centered at $\xs_*+it$ and passing through the points $1+\xd+it$, $\xs+it$, and $\xQ+\xd+it$, respectively. The radii of three circles are thus
		\[
			r_1:=\xs_*-1-\xd, \quad r:=\xs_*-\xs, \quad\text{and}\quad r_2:=\xs_*-\xQ-\xd;
		\]
	we include Figure \ref{fig:ThreeCircles} as a convenient reference. For $r_1 \leq r \leq r_2$ define
        \[
			M_r = \max_{|w|=r} \,\lh|\log\xz \big( (\xs_*+it) + w\big) \rh|
		\]
	and let $M_1 = M_{r_1}$ and $M_2 = M_{r_2}$.

    {
    \begin{figure}[ht!]
    \centering
    \begin{tikzpicture}[scale=0.5]
        \def\zMt{0.5} 
        \def\zmt{0.3} 
        \def\zu{10} 
        \def\zT{0.1} 
        \def\zss{0.6} 
        \def\zd{0.1} 
        \def\zs0{1.7} 
        \def\zang{15}
      
        \draw[thick] (-0.2*\zu,0)--(2.2*\zu,0); 
        \draw[] (\zu, 0.75*\zMt) -- (\zu, -2*\zMt) node[below] {$1$};
        \draw[] (0, 0.75*\zMt) -- (0, -2*\zMt) node[below] {$\xQ$};
        
        \node (s0) at (\zs0*\zu,0) {};
        \draw[] ($(s0)+(0,\zMt)$) -- ($(s0)+(0,-\zMt)$) node[below] {$\vphantom{\xQ}\hphantom{{}_*}\sigma_*$};
        \draw[thin] ($(s0)+(0,\zMt)$) -- ($(s0)+(0,-\zMt)$);
      
        \centerarc[red](s0)(180+5 : 180-5 : {(\zs0-\zd-0.1)*\zu}); 
        \centerarc[red](s0)(180+6 : 180-6 : {(\zs0-\zss)*\zu}); 
        \centerarc[red](s0)(180+9 : 180-9 : {(\zs0-1-\zd)*\zu}); 
      
        \node (Td) at (2*\zd*\zu,0) {};
        \node[below right] at (2*\zd*\zu,0) {$\xQ+\xd$};
        \draw[] ($(Td)+(0,\zMt)$) -- ($(Td)+(0,-\zMt)$);
      
        \node (ss) at (\zss*\zu,0) {};
        \node[below right] at (\zss*\zu,0) {$\vphantom{\xQ}\sigma$};
        \draw[] ($(ss)+(0,\zMt)$) -- ($(ss)+(0,-\zMt)$);
      
        \node (d1) at (1.1*\zu,0) {};
        \node[below right] at (1.1*\zu,0) {$\vphantom{\xQ}1+\xd$};
        \draw[] ($(d1)+(0,\zMt)$) -- ($(d1)+(0,-\zMt)$);
    \end{tikzpicture}
    \caption{Small pieces of the three circles.}
    \label{fig:ThreeCircles}
    \end{figure}
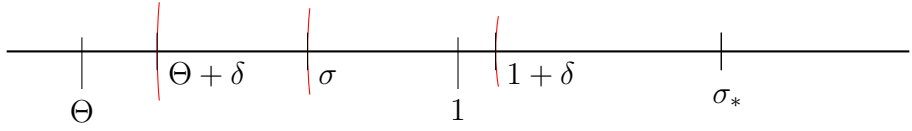
    }
        
	Since
		\[
			\log\xz(s) = \sum_{p\,\text{prime}} \sum_{k=1}^\infty \fr{1}{k p^{ks}} \qquad (\xs > 1),
		\]
	one has
		\<
            \label{eq:lzM1BB}
			M_1 \leq \sum_{p\,\text{prime}} \sum_{k=1}^\infty \fr{1}{k p^{k(1+\xd)}} < \sum_{n=1}^\infty \fr{1}{n^{1+\xd}} \leq \fr{c_9}{\xd},
		\>
    and using Lemma \ref{lem:lzDisk} it follows that $M_2 \leq c_{10}\xd^{-1}\log{t}$.
    
    Using Hadamard's lemma we bound $M_r \leq M_1^{1-\xl} M_2^{\xl}$ for $r_1 \leq r \leq r_2$, where	
		\[
			\xl 
				= \fr{\log(r/r_1)}{\log(r_2/r_1)}
				= \lh. \log\pfrac{\xs_*-\xs}{\xs_*-1-\xd} \middle/ \log\pfrac{\xs_*-\xQ-\xd}{\xs_*-1-\xd} \rh..
		\]
	Considering the numerator and denominator here we see that
		\begin{align*}
			\log\pfrac{\xs_*-\xs}{\xs_*-1-\xd} &= \log\lh(1+\fr{1-\xs+\xd}{\xs_*-1-\xd}\rh) = \fr{1-\xs+\xd}{\xs_*-1-\xd} + O\pfrac{1}{\xs_*^2}, \\
			\log\pfrac{\xs_*-\xQ-\xd}{\xs_*-1-\xd} &= \log\lh(1+\fr{1-\xQ}{\xs_*-1-\xd}\rh) = \fr{1-\xQ}{\xs_*-1-\xd} + O\pfrac{1}{\xs_*^2},
		\end{align*}
	whereby
		\[
			\xl = \fr{1-\xs+\xd+O(\xs_*^{-1})}{1-\xQ+O(\xs_*^{-1})} = \fr{1-\xs}{1-\xQ} + O(\xd) + O(\xs_*^{-1}).
		\]

	We thus have
		\[
			M_1^{1-\xl}M_2^{\xl} 
				\leq \pfrac{c_9}{\xd}^{1-\xl} \pth{\fr{c_{10}}{\xd}\log{t}}^{\xl}
				\leq \fr{c_{11}}{\xd}(\log{t})^{\xl},
		\]
	and letting $\xs_* = \xd^{-1} = \log\log{t}$ it follows that
		\<
			\label{eq:MrConvexBB}
			M_1^{1-\xl}M_2^{\xl} \leq c_{11} (\log\log{t}) (\log{t})^{\ellexp + O(\xd) + O(\xs_*^{-1})}.
		\>
	With our choices of $\xd$ and $\xs_*$ we have $(\log{t})^{O(\xd)} = e^{O(\xd\log\log{t})} = e^{O(1)} = O(1)$, and similarly for $(\log{t})^{O(\xs_*^{-1})}$. As $M_r \leq M_1^{1-\xl}M_2^\xl$ we conclude that
		\[
			|\log\xz(\xs+it)| \leq c_{10} (\log\log{t}) (\log{t})^{\ellexp} \qquad \lh(\xQ + \fr{1}{\log\log{t}} \leq \xs \leq 1\rh),
		\]
	and inequality \eqref{eq:lzCrudeBB2} follows for all $t > t_0$, giving the result.
\end{proof}

\bibliography{mubia.bib}
\end{document}